\newcommand{\fg}{{\mathfrak{g}}}
\newcommand{\fh}{{\mathfrak{h}}}
\newcommand{\fz}{{\mathfrak{z}}}
\newcommand{\fgl}{{\mathfrak{gl}}}
\newcommand{\tr}{{\rm tr} \,}
\newcommand{\diff}{{\rm d} \,}
\newcommand{\End}{{\rm End} \,}
\newcommand{\diag}{{\rm diag} \,}
\newcommand{\rk}{{\rm rk} \,}
\newcommand{\gr}{{\rm gr} \,}
\newcommand{\sgn}{{\rm sgn} \,}
\newcommand{\qdet}{{\rm qdet} \,}
\newcommand{\BC}{{\mathbb{C}}}
\newcommand{\BR}{{\mathbb{R}}}
\newcommand{\BP}{{\mathbb{P}}}
\newcommand{\BZ}{{\mathbb{Z}}}
\newcommand{\ol}{\overline}
\newtheorem{defn}[subsection]{Definition}
\newtheorem{thm}[subsection]{Theorem}
\newtheorem{lem}[subsection]{Lemma}
\newtheorem{prop}[subsection]{Proposition}
\newtheorem{cor}[subsection]{Corollary}
\newtheorem{fact}[subsection]{Fact}
\newtheorem*{conjj}{Conjecture}
\newtheorem*{rem}{Remark}
\title{Degeneration of Bethe subalgebras in the Yangian of $\mathfrak{gl}_n$}
\author{Aleksei Ilin and Leonid Rybnikov}
\begin{document}
\maketitle

\begin{abstract}
We study degenerations of Bethe subalgebras $B(C)$ in the Yangian $Y(\mathfrak{gl}_n)$, where $C$ is a regular diagonal matrix. We show that closure of the parameter space of the family of Bethe subalgebras, which parametrizes all possible degenerations, is the Deligne-Mumford moduli space of stable rational curves $\overline{M_{0,n+2}}$. All subalgebras corresponding to the points of $\overline{M_{0,n+2}}$ are free and maximal commutative. We  describe explicitly the ``simplest'' degenerations and show that every degeneration is the composition of the simplest ones. The Deligne-Mumford space $\overline{M_{0,n+2}}$ generalizes to other root systems as some De Concini-Procesi resolution of some toric variety. We state a conjecture generalizing our results to Bethe subalgebras in the Yangian of arbitrary simple Lie algebra in terms of this De Concini-Procesi resolution. 
\end{abstract}
\makeatletter
\@setabstract
\makeatother

\section{Introduction}
\subsection{Bethe subalgebras in the Yangian.} Yangian for $\fgl_n$ is the associative algebra, historically one of the first examples of {\em quantum groups}. The Yangian $Y(\fgl_n)$ is a Hopf algebra
deforming the enveloping algebra $U(\fgl_n[t])$, where
$\fgl_n[t]$ is the (infinite dimensional) Lie algebra of $\fgl_n$-valued polynomials. 
This algebra was considered in the works of L. Fadeev and St.-Petersburg school in the relation with the inverse scattering method, see e.g. \cite{TF,T}.
There is a family $B(C)$ of commutative subalgebras in $Y(\fgl_n)$ parameterized by complex matrices $C\in {\rm Mat}_n$ called Bethe subalgebras. This family originates from the integrable models in statistical mechanics and algebraic Bethe ansatz. For details and links on Yangians, we refer the reader to the survey \cite{molev} and to the book \cite{molev2} by A.~Molev. 

Denote by $T$ the maximal torus in $GL_n$ i.e. the subgroup of diagonal matrices in $GL_n$. In the present paper we restrict ourselves to Bethe subalgebras with $C\in T$. Let $T^{reg}$ be the set of regular elements of the torus, i.e. the set of matrices from $T$ with pairwise distinct eigenvalues. We will frequently use the embedding $GL_n\subset \fgl_n={\rm Mat}_n$ and regard $C$ as an element of the Cartan subalgebra $\fh\subset\fgl_n$.
In \cite{nazol} Nazarov and Olshanski showed that $B(C)$ is a free polynomial algebra and that it is a maximal commutative subalgebra in $Y(\fgl_n)$ for all $C \in T^{reg}$.
For non-regular $C\in T\backslash T^{reg}$, the subalgebra $B(C)$ becomes smaller. But there is a natural way to assign a commutative subalgebra of the same size as for $C \in T^{reg}$ to any $C_0\in \fh\backslash T^{reg}$ by taking some {\em limit} of $B(C)$ as $C\to C_0$. For example, one can get the Gelfand-Tsetlin subalgebra of the Yangian as the $t\to0$ limit of some $1$-parametric family of Bethe subalgebras with $C(t)\in T^{reg}$ for $t\ne0$ and $C(0)=E_{11}$. In general, such limit subalgebra $\lim\limits_{C\to C_0}B(C)$ is not unique since it depends on the $1$-parametric family $C(t)$ such that $C(0)=C_0$. Our goal is to describe all possible limit subalgebras.

The images of Bethe subalgebras in the universal enveloping algebra $U(\fgl_n)$ under the evaluation homomorphism are known as ``shift of argument subalgebras''. The problem of describing all limits for the  shift of argument subalgebras was posed by Vinberg in late 1990-s. The answer was given by V.Shuvalov in \cite{shuvalov} and later in more algebro-geometric terms by L.~Aguirre, G.~Felder and A.~Veselov in \cite{AFV}. Their description is roughly as follows. Shift of argument subalgebras themselves are parametrized by regular diagonal $n\times n$-matrices up to proportionality and up to adding a scalar matrix. The latter can be regarded as the space of configurations of $n$ pairwise distinct points on the complex line. It turns out that the limit shift of argument subalgebras are parametrized by the Deligne-Mumford closure of this space \cite{AFV,shuvalov}, all limit subalgebras are free \cite{shuvalov} and maximal commutative \cite{taras}, and moreover there is an explicit inductive procedure generating the limit subalgebras from smaller shift of argument subalgebras assigned to smaller $n$. It is natural to expect a similar description for limit Bethe subalgebras in the Yangian.

\subsection{Limits of Bethe subalgebras.} The limit subalgebras can be defined in purely algebro-geometric terms (we will do this in Section~2). Roughly, the construction of Bethe algebras can be regarded as a regular map from $T^{reg}$ to the ``Grassmannian'' of subspaces in $Y(\fgl_n)$ of the same ``dimension'' as $B(C)$. The space $T^{reg}$ is noncompact while the Grassmannian is (in appropriate sense) compact. So we can take the closure of the image of this map and obtain new subalgebras with the same Poincare series. We call such subalgebras {\em limit subalgebras.}

Since the subalgebras $B(C)$ do not change under dilations of $C$, the parameter space for the family $B(C)$ is the quotient $T^{reg}/\mathbb{C}^*$ of the set of regular elements of the torus by the subgroup of scalar matrices. Following \cite{AFV} we can regard the space $T^{reg}/\mathbb{C}^*$ as the moduli space $M_{0,n+2}$ of rational curves with $n+2$ marked points (we can assign to a matrix $C$ with the eigenvalues $z_1,\ldots,z_n$ the curve $\mathbb{P}^1$ with the marked points $0,z_1,\ldots,z_n,\infty$). Therefore the limit subalgebras of the family $B(C)$ are parametrized by some compactification of the space $M_{0,n+2}$. The main results of the present paper can be summarized as follows:

\medskip

\noindent {\bf Main Theorem.} \emph{The closure of $T^{reg}/\mathbb{C}^*$ which parametrizes the limit subalgebras is isomorphic to the Deligne-Mumford compactification $\overline{M_{0,n+2}}$. All limit subalgebras are free polynomial algebras and maximal commutative subalgebras in $Y(\fgl_n)$.}

\medskip

In fact we will describe all limit subalgebras as products of smaller Bethe subalgebras and some shift of argument subalgebras (which are the images of Bethe subalgebras in the universal enveloping algebra $U(\fgl_n)$ under the evaluation map).

It is natural to expect this result from the perspective of bispectral duality of Mukhin, Tarasov and Varchenko \cite{MTV} which relates the image of Bethe algebra in a tensor product of evaluation representations with the algebra of higher Hamiltonians of the trigonometric Gaudin model for $\fg=\fgl_k$. \footnote{We thank Evgeny Mukhin for pointing this out.} The latter is the image of a Gaudin subalgebra in the tensor product of $n+1$ copies of the universal enveloping algebra of $\fgl_k$. On the  other hand in \cite{ryb2} we prove that the closure of the parameter space for this family of Gaudin algebras is $\overline{M_{0,n+2}}$. The problem in this approach is that we have to deal with the images of Bethe subalgebras in some specific representations of the Yangian, so the closure of the parameter space could be different from that of the subalgebras in the Yangian itself. We use different approach based on shift of argument subalgebras and the Olshanski centralizer construction of the Yangian. 

The main idea of our proof is to reduce the problem of describing the closure to the similar one for the family of shift of argument subalgebras in $U(\fgl_N)$ where $N$ is big enough. For this we use on the one hand the Olshanski centralizer construction \cite{olsh} which approximates the Yangian $Y(\fgl_n)$ by the centralizer subalgebras of the form $U(\fgl_{n+k})^{\fgl_k}$, and, on the other hand, the results of Shuvalov \cite{shuvalov} and Tarasov \cite{taras} which describe the limit subalgebras for the family of shift of argument subalgebras.

\subsection{Shift of argument subalgebras.}
Let $\hat F(C)\subset U(\fgl_n)$ be the image of $B(C)\subset Y(\fgl_n)$ under the evaluation homomorphism $Y(\fgl_n)\to U(\fgl_n)$. The subalgebra $\hat F(C)$ does not change after adding a scalar matrix to $C$. The associated graded of $\hat F(C)$ is a Poisson commutative subalgebra $F(C)\subset S(\fgl_n)$ known as \emph{shift of argument subalgebra} since it is generated by the adjoint invariants from $S(\fgl_n)$ shifted by $tC$ for all $t\in\BC$, see \cite{nazol}.

Suppose $C$ is a diagonal matrix with pairwise distinct eigenvalues $z_1,\ldots,z_n$. Then the algebra $F(C)$ contains the quadratic elements $H_i:=\sum\limits_{j\ne i}\frac{e_{ij}e_{ji}}{z_i-z_j}$ which are the coefficients of (an appropriate version of) the $KZ$ connection. Moreover both $F(C)$ and $\hat F(C)$ are uniquely determined by the subspace $Q_C\subset S(\fgl_n)$ which is the linear span of the $H_i$'s. Note that $H_i$ do not change under simultaneous affine transformations of the $z$'s, hence the space of parameters of the corresponding shift of argument subalgebras is naturally the configuration space of $n$ pairwise distinct points on the affine line or, equivalently, the configuration space $M_{0,n+1}$ of $n+1$ pairwise distinct points on the projective line. From the results of Aguirre, Felder and Veselov \cite{AFV} it follows that the closure of the family of subspaces $Q_C\subset S(\fgl_n)$ is the Deligne-Mumford compactification $\overline{M_{0,n+1}}$, which is crucial for our proof.

\subsection{Centralizer construction.} Let $A_0=\BC[\mathcal{E}_1,\mathcal{E}_2, \ldots, ]$ be the filtered polynomial algebra of infinitely many generators such that $\deg \mathcal{E}_i=i$. The Olshanski centralizer construction \cite{olsh} is the collection of surjective homomorphisms of filtered algebras $Y(\fgl_n)\otimes A_0\to U(\fgl_{n+k})^{\fgl_k}$ generalizing the evaluation map. The intersection of kernels of such homomorphisms is known to be zero, so this collection of homomorphisms can be regarded as an {\em an asymptotic isomorphism} i.e. for each filtered component of $Y(\fgl_n)\otimes A_0$ there is $K\in\BZ$ such that for any $k>K$ the restriction of the above homomorphism to this filtered component is an isomorphism. The idea of our proof is to analyze the parameter spaces of the images of $B(C)$ under the centralizer construction maps. Since centralizer construction is an asymptotic isomorphism the closure of the parameter space stabilizes for $k>>0$. 

\subsection{Plan of the proof.} We prove that the image of $B(C)\otimes A_0$ in the centralizer algebra $U(\fgl_{n+k})^{\fgl_k}$ is contained in some non-regular shift of argument subalgebra $\hat F(C^{(k)})$ (Proposition~\ref{incl}) and moreover $\hat F(C^{(k)})$ is asymptotically isomorphic to $B(C)\otimes A_0$. We show that the closure of the parameter space for the subalgebras $\hat F(C^{(k)})$ is $\overline{M_{0,n+2}}$ (realized as a subvariety in $\overline{M_{0,n+k+1}}$ which parametrizes all limit shift of argument subalgebras in $U(\fgl_{n+k})$). Since the closure of the parameter space for $\hat F(C^{(k)})$ does not depend on $k$, the closure of the parameter space for Bethe subalgebras in $Y(\fgl_n)$ is  $\overline{M_{0,n+2}}$ as well.

Next, we deduce from the results of Shuvalov and Tarasov that any limit of the algebras $\hat F(C^{(k)})$ is a free polynomial algebra and a maximal commutative subalgebra in $U(\fgl_{n+k})^{\fgl_k}$. Since $B(C)\otimes A_0$ is asymptotically isomorphic to $\hat F(C^{(k)})$, the same is true for limit Bethe subalgebras corresponding to points of $\overline{M_{0,n+2}}$ (Theorem \ref{result}). Using Shuvalov's description of limit shift of argument subalgebras we explicitly describe the simplest limits corresponding to generic points of codimension $1$ strata in   $\overline{M_{0,n+2}}$ in terms of Bethe subalgebras for smaller Yangians and shift of argument subalgebras for smaller Lie algebras (Theorem \ref{result2}). Iterating this procedure we obtain the explicit description of all limit Bethe subalgebras.

\subsection{Generalization to Yangians of other types.}
To any semisimple Lie algebra $\fg$ (and even more generally, to any Kac-Moody algebra $\fg$) one can assign the Yangian $Y(\fg)$ , the quantum group generated by the rational R-matrix of $\fg$. To any element $C$ of the Cartan torus $T$ of the corresponding Lie group $G$ one can assign a Bethe subalgebra generated by traces of the products of $C$ with the R-matrix in all integrable representations. This is a commutative subalgebra which is expected to be maximal for regular $C$. The variety parameterizing all possible limits of such commutative subalgebras is a compactification of the set $T^{reg}$ of regular elements of the torus (well-defined as a pro-algebraic scheme). Our Theorem~\ref{result} states that in the case $\fg=\fgl_n$ it is $\overline{M_{0,n+2}}$. The natural generalization of this statement to Lie algebras of other types is the De Concini -- Procesi closure \cite{DCP} of the complement to the following arrangement of subvarieties in a toric variety. Consider the toric variety $X$ (acted on by the maximal torus $T\subset G$) which corresponds to the fan determined by the root hyperplanes. Equivalently, $X$ is the closure of a generic $T$-orbit in the flag variety $G/B$. We can regard $T^{reg}$ as a complement of an arrangement of hypersurfaces in $X$. Following De Concini and Procesi \cite{DCP}, one can construct a compactification $M_{\fg}$ of $T^{reg}$ by blowing up all indecomposable intersections of the hypersurfaces in $X$.

\begin{conjj} $M_{\fg}$ is the parameter space for limit subalgebras of the family of Bethe subalgebras in the Yangian $Y(\fg)$.
\end{conjj}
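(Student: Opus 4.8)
We outline the strategy we would follow to prove the conjecture, modeled on the proof of Theorem~\ref{result} in type $A$ with the Olshanski centralizer construction and the Shuvalov--Tarasov results replaced by inputs that make sense for all $\fg$. Write $\mathrm{Gr}$ for the ind-scheme of subspaces of $Y(\fg)$ with the Poincaré series of $B(C)$, $C\in T^{reg}$, equipped with its tautological flat family, and let $\beta\colon T^{reg}\to\mathrm{Gr}$, $C\mapsto B(C)$, be the classifying map. The first task is to extend $\beta$ to a morphism $\overline\beta\colon M_{\fg}\to\mathrm{Gr}$. Since $M_{\fg}$ is a smooth compactification of $T^{reg}$ and $\mathrm{Gr}$ is proper in the appropriate sense, by the valuative criterion it suffices to produce, for every formal arc in $M_{\fg}$ centered at a boundary point and generically in $T^{reg}$, the corresponding limit subalgebra in $\mathrm{Gr}$. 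The De Concini--Procesi model $M_{\fg}$ is covered by explicit charts indexed by nested sets of indecomposable intersections of the hypersurfaces $\{\alpha(C)=1\}$ in the toric variety $X$, and in each chart an arc transverse to the boundary is built from elementary arcs, each transverse to a single boundary divisor; so one reduces to computing $\lim_{t\to0}B(C(t))$ for these elementary degenerations and to checking that these limits are compatible with the combinatorics of nested sets — this compatibility being the formal content of the assertion that $M_{\fg}$, rather than a smaller or larger blow-up, is the right space.

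Second, one would identify the two families of elementary limits. The torus orbits of $X$ are indexed by faces of the fan of root hyperplanes, equivalently by standard Levi subalgebras $\mathfrak{l}\subset\fg$ up to $W$; when $C$ runs to the relative interior of such an orbit the $R$-matrix of $\fg$ should degenerate so that $\lim B(C)$ is generated by the Bethe subalgebra of the smaller Yangian $Y(\mathfrak{l})$ together with a ``shift of argument'' piece built from the action of $\mathfrak{l}$ on $\fg/\mathfrak{l}$ — a Yangian analogue of the inclusion $B(C^{(k)})\subset\hat F(C^{(k)})$ of Proposition~\ref{incl}. The remaining boundary divisors are the exceptional divisors over the indecomposable intersections, and their generic limits should be the bubbling degenerations of Aguirre--Felder--Veselov type: $\lim B(C)$ factoring as (a Bethe subalgebra for a smaller torus) $\otimes$ (a limit shift of argument subalgebra for a smaller reductive Lie algebra). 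Iterating these along the chart structure, as in Theorem~\ref{result2}, would describe every limit as a product of smaller Bethe and shift of argument subalgebras.

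Third, freeness and maximality would follow by induction on $\dim\fg$ from these factorizations. Flatness of the pullback under $\overline\beta$ of the tautological family — automatic once $\overline\beta$ exists, since $M_{\fg}$ is integral — forces every limit to have the Poincaré series of a regular $B(C)$, so one only needs that each limit is a \emph{polynomial} algebra with no relations among its evident generators and equals its own centralizer in $Y(\fg)$; by the factorizations this reduces to the same two statements for smaller Yangians and for limit Mishchenko--Fomenko subalgebras, the freeness of the latter being Shuvalov's theorem \cite{shuvalov}. \textbf{The main obstacle is maximality.} In type $A$ it was supplied by Tarasov's theorem \cite{taras}; its type-independent counterpart — that every limit of the shift of argument (and hence every limit Bethe) subalgebra is maximal commutative — is at present known only in special cases, and establishing it in general, most plausibly through the Feigin--Frenkel center and the geometry of $\fg$-opers with prescribed singular behaviour at the marked points, is the crux of the conjecture.

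An alternative, less Yangian-intrinsic route goes through the bispectral duality of Mukhin--Tarasov--Varchenko \cite{MTV}: the image of $B(C)$ in a tensor product of evaluation modules is a trigonometric Gaudin subalgebra, whose parameter-space closure is a De Concini--Procesi model by \cite{ryb2}, so one would upgrade this coincidence of parameter spaces of \emph{images} to a coincidence of parameter spaces of the subalgebras of $Y(\fg)$ themselves — which requires a family of representations separating the (limit) Bethe subalgebras, exactly the role that the centralizer construction \cite{olsh} plays for $\fgl_n$. We expect the two genuinely hard points to be this separation statement for arbitrary $\fg$ and the maximality of limit classical subalgebras; once these are in hand, identifying the closure with $M_{\fg}$ should be comparatively formal given the description of the elementary degenerations above.
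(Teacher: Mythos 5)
The statement you are addressing is stated in the paper only as a conjecture; the authors prove the corresponding result (Theorem~\ref{result}) solely for $\fg=\fgl_n$, and they explicitly flag why their argument does not extend: the whole type-$A$ proof is channelled through the evaluation homomorphism $\pi_n\colon Y(\fgl_n)\to U(\fgl_n)$ and the Olshanski centralizer construction $\eta_k$, which realize $B(C)\otimes A_0$ asymptotically as a shift of argument subalgebra $\hat F(C^{(k)})\subset U(\fgl_{n+k})^{\fgl_k}$ (Proposition~\ref{incl}), after which the known results of Shuvalov, Tarasov and Aguirre--Felder--Veselov finish the job. Your text is a strategy outline rather than a proof, and you say so yourself: the ``separation'' statement replacing the centralizer construction, the identification of the elementary degenerations of $B(C)$ for general $\fg$, and the maximality of the resulting limits are all left as open problems. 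Since none of these is supplied, there is a genuine gap -- indeed the gap is exactly the content of the conjecture, so the proposal cannot be counted as a proof, only as a plausible research program broadly consistent with the paper's own remarks (the MTV bispectral duality route is the one the authors mention and discard for the same reason you note).

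Two more specific points. First, you locate the ``crux'' in the maximality of limit shift of argument subalgebras, but at the Poisson level this is already known for arbitrary semisimple $\fg$ by Tarasov \cite{taras} (the paper uses it in that generality); what is genuinely type-$A$-specific is the uniqueness of the lifting to $U(\fg)$ \cite{taras2} and, above all, the absence of any evaluation homomorphism $Y(\fg)\to U(\fg)$ for general $\fg$, so even the expected inclusion of a ``shift of argument piece'' into a limit Bethe subalgebra (your analogue of Proposition~\ref{incl}) has no current formulation, let alone proof. Second, extending the classifying map $\beta$ to $M_\fg$ by the valuative criterion and computing elementary one-parameter limits would only give a dominant morphism from $M_\fg$ to the closure $Z$; the conjecture asserts an isomorphism, so one must also show that distinct boundary points of $M_\fg$ yield distinct limit subalgebras and that no boundary strata collapse. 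In the paper this injectivity is obtained from the AFV description of the closure of the quadratic spaces $Q_C$ together with the fact that limit subalgebras are determined by their quadratic parts -- a tool with no available Yangian counterpart, since $B(C)$ is not determined by any finite graded piece in an established way. Calling this step ``comparatively formal'' understates it; it is one of the two or three places where a new idea is required.
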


\begin{rem}
Note that $\overline{M_{0,n+2}}$ is the De Concini--Procesi closure of $T^{reg}$ for $\fg=\fgl_n$.
\end{rem}

\begin{rem}
For infinite root systems (say, for affine Kac-Moody) $M_\fg$ is not well-defined as an algebraic scheme but is still well-defined as a pro-algebraic scheme. On the other hand, in this case the Bethe subalgebras themselves live not in the Yangian but rather in its completion which is an inverse limit of some quotients of the Yangian. So we can generalize our conjecture to Yangians of infinite-dimensional Lie algebras by stating that two inverse limits are isomorphic.
\end{rem}

\subsection{Application to crystals.} By analogy with the shift of argument algebras and Gaudin algebras, we expect that for real values of the parameter the corresponding (limit) Bethe algebra has simple spectrum in any irreducible finite-dimensional (or integrable) representation of the Yangian. Following \cite{ryb2}, one can assign a covering of $M_\fg(\BR)$ to any irreducible representation of $Y(\fg)$. The fiber of such covering is just the set of joint eigenlines for the elements of a Bethe algebra in this representation. For Kirillov-Reshetikhin modules, we expect a natural bijection of the fiber of this covering with the corresponding Kirillov-Reshetikhin crystal hence obtaining an action of the fundamental group $\pi_1(M_\fg(\BR))$ on the crystal. We also expect that this action can be described in purely combinatorial terms.

\subsection{The paper is organized as follows.}  In section~2 we recall some known facts about Yangians  and Bethe subalgebras. In section~3 we recall the definition of the moduli space $\overline{M_{0,n+1}}$. In section 4 we discuss some facts about shift of argument subalgebras. In Section 5 we formulate the main theorems. In section 6 we prove the main theorems.

\subsection{Acknowledgements}
We thank Michael Finkelberg and Alexander Molev for helpful remarks and references. We thank the referee for careful reading of the first version of the text. This work has been funded by the  Russian Academic Excellence Project '5-100'. The work was finished during A.I.'s internship at MIT supported by NRU HSE. A.I. is deeply indebted to MIT and especially to R.~Bezrukavnikov and P.~Etingof, for providing warm hospitality and excellent working conditions. The work of A.I. and L.R. was supported in part by the Simons Foundation. The work of L.R. has been supported by the Russian Science Foundation under grant 16-11-10160. 

\section{Yangians and Bethe subalgebras.}
\subsection{Definitions.} We follow the notations and conventions of \cite{molev2}.
\begin{defn}
{\itshape Yangian for $\fgl_n$} is a complex unital associative algebra with countably many generators $t_{ij}^{(1)}, t_{ij}^{(2)}, \ldots $ where $1 \leqslant i,j \leqslant n$, and the defining relations
$$ [t_{ij}^{(r+1)},t_{kl}^{(s)}] - [t_{ij}^{(r)},t_{kl}^{(s+1)}] = t_{kj}^{(r)}t_{il}^{(s)} - t_{kj}^{(s)}t_{il}^{(r)}, $$
where $r,s \geqslant 0$ and $t_{ij}^{(0)} = \delta_{ij}$. This algebra is denoted by $Y(\fgl_n)$.
\end{defn}
It is convenient to consider the formal series
$$t_{ij}(u) = \delta_{ij} + t_{ij}^{(1)} u^{-1} + t_{ij}^{(2)} u^{-2} + \ldots \in Y(\fgl_n)[[u^{-1}]].$$

We denote by $T(u)$ the matrix whose $ij$-entry is $t_{ij}(u)$.
We regard this matrix as the following element of $Y(\fgl_n)[[u^{-1}]] \otimes \rm{End} \ \mathbb{C}^n$:
$$T(u) = \sum_{i,j=1}^n t_{ij}(u) \otimes e_{ij},$$
where $e_{ij}$ stands for the standard matrix units.

Consider the algebra
$$Y(\fgl_n)[[u^{-1}]] \otimes (\rm{End} \ \mathbb{C}^n)^{\otimes n}.$$
For any $a \in \{1, \ldots ,n\}$ there is an embedding $$
i_a: Y(\fgl_n)[[u^{-1}]] \otimes \rm{End} \ \mathbb{C}^n\to Y(\fgl_n)[[u^{-1}]] \otimes (\rm{End} \ \mathbb{C}^n)^{\otimes n}
$$
which is an identity on $Y(\fgl_n)[[u^{-1}]]$ and embeds $\rm{End} \ \mathbb{C}^n$ as the $a$-th tensor factor in $(\rm{End} \ \mathbb{C}^n)^{\otimes n}$. Denote by $T_a(u)$ the image of $T(u)$ under this embedding.

The symmetric group $S_n$ acts on $Y(\fgl_n)[[u^{-1}]] \otimes (\rm{End} \ \mathbb{C}^n)^{\otimes n}$ by permuting the tensor factors. This action factors through the embedding $S_n\hookrightarrow (\rm{End} \ \mathbb{C}^n)^{\otimes n}$ hence the group algebra $\BC[S_n]$ is a subalgebra of $Y(\fgl_n)[[u^{-1}]] \otimes (\rm{End} \ \mathbb{C}^n)^{\otimes n}$. Let $S_m$ be the subgroup of $S_n$ permuting the first $m$ tensor factors. Denote by $A_m$ the antisymmetrizer $$\sum_{\sigma \in S_m} (-1)^{\sigma}\sigma \in \mathbb{C}[S_m]\subset Y(\fgl_n)[[u^{-1}]] \otimes (\rm{End} \ \mathbb{C}^n)^{\otimes n}.$$
Let $C$ be a $\mathbb{C}$-valued $n \times n$ matrix. For any $a \in \{1, \ldots ,n\}$ denote by $C_a$ the element $i_a(1\otimes C)\in Y(\fgl_n)[[u^{-1}]] \otimes (\rm{End} \ \mathbb{C}^n)^{\otimes n}$.
\begin{defn}
For any $1 \leqslant k \leqslant n$ introduce the series with coefficients in $Y(\fgl_n)$ by
$$\tau_k(u,C) = \frac{1}{k!} \tr A_k  C_1 \ldots C_k T_1(u) \ldots T_k(u-k+1),$$
where we take the trace over all $k$ copies of $ \rm{End} \ \mathbb{C}^n$.
\end{defn}

\begin{rem}
$\tau_n(u,C)=\det C\ \tau_n(u,E)$, and hence, does not depend on $C$ up to proportionality. The series $n! \cdot \tau_n(u,E)$ is called quantum determinant and usually denoted by ${\rm qdet} \, T(u)$.
\end{rem}

The following result is classical (see e.g. \cite{molev2,nazol}):
\begin{prop} The coefficients of $\tau_k(u,C)$, $k=1,\ldots,n$ pairwise commute. The coefficients of ${\rm qdet} \, T(u)$ generate the center of $Y(\fgl_n)$.
\end{prop}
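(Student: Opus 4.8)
The plan is to establish commutativity by the standard RTT/fusion argument and then to identify the center with the coefficients of $\qdet T(u)$ via a classical filtration/degeneration argument. First I would recall the fundamental relation in $Y(\fgl_n)[[u^{-1}]]\otimes (\End\BC^n)^{\otimes 2}$, namely $R_{12}(u-v)T_1(u)T_2(v)=T_2(v)T_1(u)R_{12}(u-v)$, where $R_{12}(u)=1-Pu^{-1}$ and $P$ is the permutation operator. The key point is that the antisymmetrizer $A_k$ is (up to scalar) a specialization of a product of such $R$-matrices, so the ordered product $T_1(u)\cdots T_k(u-k+1)$ satisfies a compatible exchange relation with $A_k$: one has $A_k\, T_1(u)\cdots T_k(u-k+1)=T_k(u-k+1)\cdots T_1(u)\, A_k$, and since the $C_a$ are scalar matrices acting in distinct tensor factors they commute with everything relevant and with $A_k$. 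From here the commutativity $[\tau_k(u,C),\tau_l(v,C)]=0$ follows by the usual trick: form the product $\tau_k(u,C)\tau_l(v,C)$ inside $Y(\fgl_n)[[u^{-1}]]\otimes(\End\BC^n)^{\otimes(k+l)}$ using $A_k$ in the first $k$ factors and $A_l$ in the next $l$ factors, move all $T$-factors past each other using the $RTT$ relation (the $R$-matrix contributions are absorbed by the antisymmetrizers, since $A_kA_{k,l}$-type products are proportional to a single larger antisymmetrizer after symmetrization), and conclude that the product is symmetric in the two groups of variables; taking traces gives $\tau_k(u,C)\tau_l(v,C)=\tau_l(v,C)\tau_k(u,C)$. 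I expect this fusion bookkeeping — keeping precise track of which $R$-matrix factors survive and which are killed by $A_k$, $A_l$ — to be the main technical obstacle, though it is classical and can be cited from \cite{molev2,nazol}.

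For the second statement, I would first show that the coefficients of $\qdet T(u)=n!\,\tau_n(u,E)$ are central. Since $\tau_n(u,E)=\frac{1}{n!}\tr A_n T_1(u)\cdots T_n(u-n+1)$ and $A_n$ is the full antisymmetrizer (a rank-one projector onto the sign representation up to scalar), one can write $A_n T_1(u)\cdots T_n(u-n+1)=A_n\cdot(\text{scalar series})$, and centrality follows from the $RTT$ relation together with $R_{0,1\ldots n}(u)A_n=A_n$-type identities: conjugating $T(u)$ by $T(v)$ acts trivially on the quantum determinant. Concretely, $T(u)\,\qdet T(v)=\qdet T(v)\,T(u)$ is deduced by inserting an extra tensor factor and using that $A_{n+1}$ annihilates the off-diagonal corrections. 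This part is again classical.

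Finally, to see that these central elements \emph{generate} the whole center, I would pass to the associated graded. The Yangian carries a filtration with $\gr Y(\fgl_n)\cong U(\fgl_n[t])$ (or, in the other standard filtration, $S(\fgl_n[t])$), under which $\qdet T(u)$ degenerates to the generating series of the standard generators of the center of $U(\fgl_n[t])$ / the Poisson center of $S(\fgl_n[t])$, i.e.\ to the Gelfand invariants of the loop algebra. Since the center of $S(\fgl_n[t])$ is freely generated by these, and since a central element of $Y(\fgl_n)$ has central symbol, an induction on the filtration degree shows that any central element of $Y(\fgl_n)$ lies in the subalgebra generated by the coefficients of $\qdet T(u)$ — subtract off an appropriate polynomial in the quantum-determinant coefficients to strictly lower the degree, and repeat. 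The only subtlety here is to make sure the symbol map is compatible with the identification of centers, which is exactly the content of the standard PBW-type theorem for $Y(\fgl_n)$; I would simply invoke \cite{molev2} for this. Hence both assertions follow, and the proposition is proved.
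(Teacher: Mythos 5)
The paper offers no proof of this proposition at all --- it is quoted as classical with a pointer to \cite{molev2,nazol} --- and your sketch follows exactly the standard route of those references (fusion with antisymmetrizers for commutativity; centrality of $\qdet T(u)$; a filtration argument for generation of the center), so there is no in-paper argument to compare against. Two of your intermediate claims, however, are wrong as written and would derail the argument if used literally. The individual insertions $C_a$ do \emph{not} commute with $A_k$ (they are conjugates of a fixed non-scalar diagonal matrix placed in one tensor factor, and $A_k$ mixes the factors); what the fusion bookkeeping actually requires is that the \emph{product} $C_1\cdots C_k=C^{\otimes k}$ commutes with $A_k$ (it commutes with every permutation operator) and, crucially, that $C\otimes C$ commutes with the rational $R$-matrix $R(u)=1-Pu^{-1}$, so that the $R$-factors produced by the exchange relations can be slid past the $C$-insertions before being absorbed by the antisymmetrizers. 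This is precisely the mechanism that makes the construction work for an arbitrary matrix $C$, and it needs to be stated correctly.

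In the center argument the filtration matters. The generation step should be run with the filtration $\deg t_{ij}^{(r)}=r-1$, for which $\gr Y(\fgl_n)\cong U(\fgl_n[t])$; with the paper's filtration $\deg t_{ij}^{(r)}=r$ the associated graded is a commutative polynomial algebra, but its Poisson bracket is \emph{not} the Lie--Poisson bracket of $\fgl_n[t]$ (the defining relations leave quadratic terms in the bracket), so the parenthetical identification with $S(\fgl_n[t])$ is misleading. Under the correct filtration the symbol of the $r$-th coefficient of $\qdet T(u)$ is $(E_{11}+\dots+E_{nn})\,t^{r-1}$, a generator of $U(\fz[t])$ with $\fz$ the one-dimensional center of $\fgl_n$ --- not a ``Gelfand invariant of the loop algebra'': the higher Casimir-type invariants of the constant subalgebra are \emph{not} central in $U(\fgl_n[t])$, and the center you must compare with is not ``freely generated by Gelfand invariants''. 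The genuinely nontrivial input your degree-lowering induction needs is the lemma that $Z(U(\fgl_n[t]))$ is exactly the polynomial algebra on the elements $(E_{11}+\dots+E_{nn})\,t^{r}$, $r\geqslant 0$; with that lemma (proved in \cite{molev2}) and the observation that symbols of central elements are central in the associated graded, your induction goes through.
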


\begin{defn}
The commutative subalgebra generated by the coefficients of $\tau_k(u,C)$ is called {\itshape Bethe
subalgebra}. We denote it by $B(C)$.
\end{defn}

In \cite{nazol} Nazarov and Olshanski proved the following theorem:

\begin{thm}\label{MaxFreeTheorem}
Suppose that $C \in \End \mathbb{C}^n$ has pairwise distinct nonzero eigenvalues. Then the subalgebra $B(C)$ in $Y(\fgl_n)$ is maximal commutative. The coefficients of $\tau_1(u,C), \ldots, \tau_n(u,C)$ are free generators of $B(C)$.
\end{thm}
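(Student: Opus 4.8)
The plan is to derive both statements from their classical (commutative) counterparts by passing to the associated graded algebra of $Y(\fgl_n)$. First, for $g\in GL_n$ the assignment $T(u)\mapsto gT(u)g^{-1}$ is an automorphism of $Y(\fgl_n)$ carrying $\tau_k(u,C)$ to $\tau_k(u,gCg^{-1})$ — one checks this directly, using that the antisymmetrizer $A_k$ commutes with $g^{\otimes k}$ and that the trace is cyclic — hence it carries $B(C)$ to $B(gCg^{-1})$, and we may assume $C=\diag(c_1,\dots,c_n)$ with $c_1,\dots,c_n$ pairwise distinct and nonzero. Now equip $Y(\fgl_n)$ with the filtration $\deg t_{ij}^{(r)}=r$. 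Telescoping the defining relation shows that $[t_{ij}^{(r)},t_{kl}^{(s)}]$ lies in the span of products of two generators of total degree $r+s-1$, so $\gr Y(\fgl_n)$ is a graded Poisson algebra; by the Poincar\'e--Birkhoff--Witt theorem it is, as a graded algebra, the polynomial algebra $S(\fgl_n[t])$ on the symbols $\bar t_{ij}^{(r)}\leftrightarrow e_{ij}t^{r-1}$.

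Next I would identify the symbols of the generators $\tau_k^{(r)}$ ($1\le k\le n$, $r\ge1$) of $B(C)$. In $\tau_k(u,C)$ the shifts $u-j+1$ differ from $u$ only by terms of strictly lower filtration degree, and in the commutative ring $\gr Y(\fgl_n)[[u^{-1}]]$ the expression $\frac1{k!}\tr A_k C_1\cdots C_k M_1\cdots M_k$ equals $\tr\wedge^k(CM)$, i.e.\ a coefficient of the characteristic polynomial of $CM$. Hence the symbols of $\tau_1(u,C),\dots,\tau_n(u,C)$ are, up to sign, the coefficients (in $\lambda$ and $u^{-1}$) of $\det(\lambda-C\,\mathcal T(u))$, where $\mathcal T(u)=I+\sum_{r\ge1}\bar T^{(r)}u^{-r}$ is the symbol of $T(u)$. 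Since $B(C)$ is generated by the $\tau_k^{(r)}$, it follows that $\gr B(C)$ is the classical Bethe subalgebra $\bar B(C)\subset S(\fgl_n[t])$ generated by these coefficients.

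Both assertions now reduce to facts about $\bar B(C)$. Freeness: the coefficients of $\det(\lambda-C\mathcal T(u))$ are algebraically independent. At order $u^{-1}$ the symbol of $\tau_k(u,C)$ equals $\pm\sum_i c_i\,e_{k-1}(c_1,\dots,\widehat{c_i},\dots,c_n)\,\bar t_{ii}^{(1)}$, and since the $c_i$ are nonzero and distinct the corresponding $n\times n$ coefficient matrix — that of the Lagrange interpolation basis — is invertible, so these $n$ symbols are, up to an invertible linear change, the diagonal generators $\bar t_{11}^{(1)},\dots,\bar t_{nn}^{(1)}$, hence algebraically independent; in general algebraic independence follows from a generic-point Jacobian computation (completeness of the associated classical integrable system). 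Algebraic independence of the symbols forces it for the $\tau_k^{(r)}$ themselves, which is the freeness statement. Maximality: it suffices to show that $\bar B(C)$ is a maximal Poisson-commutative subalgebra of $S(\fgl_n[t])$. Indeed, if $B(C)\subseteq A$ with $A$ commutative, then $\gr B(C)\subseteq\gr A$, and $\gr A$ is Poisson-commutative because symbols of commuting elements Poisson-commute; by maximality $\gr A=\gr B(C)$, and an induction on the filtration degree lifts this to $A=B(C)$.

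The main obstacle is the maximality of $\bar B(C)$ in $S(\fgl_n[t])$. I would establish it via Bolsinov's completeness criterion: $\bar B(C)$ contains the Casimirs (symbols of the coefficients of $\qdet T(u)$ and of $\tau_n(u,C)$), and a dimension count at a generic point of the spectrum of $S(\fgl_n[t])$ shows that the Hamiltonian vector fields of $\bar B(C)$ span a Lagrangian subspace of the generic symplectic leaf — exactly the condition forcing a Poisson-commutative subalgebra to coincide with its own Poisson centralizer. (Alternatively, the maximality of $\bar B(C)$ can be extracted from the known maximality of regular shift-of-argument subalgebras in $S(\fgl_N)$ for $N\gg0$ through the Olshanski centralizer construction — which is, in effect, the strategy pursued in the main body of this paper.)
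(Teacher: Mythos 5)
First, a point of comparison: the paper does not prove this statement at all — Theorem~\ref{MaxFreeTheorem} is quoted from Nazarov--Olshanski \cite{nazol}, and the closest in-paper material is Proposition~\ref{bethesize} (a symbol/differential computation bounding the size of $\gr B(C)$ from below) together with the centralizer-construction route (Proposition~\ref{incl}, Lemma~\ref{l36}) used for the limit algebras. Your general frame — pass to the filtration $\deg t_{ij}^{(r)}=r$, identify the symbols of the coefficients of $\tau_k(u,C)$ with the coefficients of $\tr\,\Lambda^k\bigl(C\mathcal T(u)\bigr)$ in $S(\fgl_n[t])$, deduce freeness from algebraic independence of these symbols, and deduce maximality of $B(C)$ from maximal Poisson-commutativity of $\gr B(C)$ plus an induction on the filtration — is a sound reduction, and your degree-one computation (invertibility of the matrix $\bigl(c_i\,e_{k-1}(c_1,\dots,\widehat{c_i},\dots,c_n)\bigr)_{k,i}$ for distinct nonzero $c_i$) is correct. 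But the freeness hinge, namely algebraic independence of \emph{all} the symbol coefficients (all $k=1,\dots,n$ and all powers $u^{-r}$), is only asserted (``a generic-point Jacobian computation''); this is exactly the nontrivial content, and it is what Proposition~\ref{bethesize} actually carries out (differentials at the special point $\tilde t^{(1)}_{ij}=\delta_{i,j-1}$, $\tilde t^{(p)}_{ij}=0$ for the coefficients of $u^{-p}$ with $p\ge l$, plus identification of the leading terms of the remaining coefficients with generators of the shift-of-argument algebra $F(C)$, whose independence is \cite{mf}). Note also that the equality of $\gr B(C)$ with the algebra generated by the symbols, which you later use, is itself a consequence of this independence, not of the mere computation of symbols of a generating set.

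The serious gap is maximality. Your proof that $\bar B(C)$ is maximal Poisson-commutative in $S(\fgl_n[t])$ via a Bolsinov-type completeness criterion does not work as stated: $S(\fgl_n[t])$ is a Poisson algebra in infinitely many variables, there is no finite-dimensional generic symplectic leaf on which to run a Lagrangian dimension count, and Bolsinov's criterion is a theorem about finite-dimensional Lie algebras. Moreover, even in finite dimensions ``complete'' (maximal transcendence degree) does not imply ``coincides with its Poisson centralizer'': one also needs the subalgebra to be algebraically closed in the ambient algebra (the subalgebra generated by the squares of the free generators of a maximal one is complete but not maximal); supplying this is the content of Tarasov's argument \cite{taras} in the finite-dimensional case. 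So the key step of your maximality argument is replaced by an inapplicable criterion, and it is precisely the hard part of Nazarov--Olshanski's theorem. Your parenthetical alternative — extracting maximality from the maximality of shift-of-argument subalgebras in $U(\fgl_{n+k})$ for $k\gg0$ through the Olshanski centralizer construction — is indeed the viable route (it is how the paper treats the limit subalgebras), but it rests on the asymptotic-isomorphism statement of Proposition~\ref{incl}, which in turn needs the freeness/size bound you have not established; as written, neither branch of your maximality argument is complete.
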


From now on we suppose that $C$ is diagonal and  $C_{aa} = \lambda_a,\ a=1,\ldots,n$. Let us write $\tau_k(u,C)$ explicitly in the generators $t_{ij}(u)$. Straightforward computation gives the following well known Lemma:
\begin{lem}
\label{vid}
We have
$$\tau_k(u,C) = \sum_{1 \leqslant a_1< \ldots < a_k \leqslant n} \lambda_{a_1} \ldots \lambda_{a_k} t_{a_1 \ldots a_k}^{a_1\ldots a_k}(u),$$
where $$t^{a_1 \ldots a_k}_{b_1 \ldots b_k} = \sum_{\sigma \in S_k} (-1)^\sigma \, \cdot \, t_{a_{\sigma(1)}b_1}(u) \ldots t_{a_{\sigma(k)}b_k}(u-k+1) = \sum_{\sigma \in S_k} (-1)^\sigma \, \cdot \, t_{a_1 b_{\sigma(1)}}(u-k+1) \ldots t_{a_k b_{\sigma(k)}}(u) $$ are the quantum minors.
\end{lem}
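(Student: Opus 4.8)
The plan is to evaluate the trace in the definition of $\tau_k(u,C)$ directly, taking advantage of the fact that a \emph{diagonal} $C$ interacts with the antisymmetrizer $A_k$ in the simplest possible way. Indeed, the product $C_1\cdots C_k$ is nothing but the element $C^{\otimes k}=\sum_{c_1,\ldots,c_k}\lambda_{c_1}\cdots\lambda_{c_k}\,e_{c_1c_1}\otimes\cdots\otimes e_{c_kc_k}$ of $(\End\BC^n)^{\otimes k}$, and since every permutation operator $P_\sigma$ commutes with $C^{\otimes k}$, so does $A_k\in\BC[S_k]$. Hence we may move $C^{\otimes k}$ past $A_k$ and get
$$k!\,\tau_k(u,C)=\tr\,C^{\otimes k}\bigl(A_k\,T_1(u)\cdots T_k(u-k+1)\bigr),$$
where the trace is over all $k$ tensor factors.

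Next I would use the standard expansion of $A_k\,T_1(u)\cdots T_k(u-k+1)$ in terms of quantum minors. Writing $A_k=\sum_{\sigma\in S_k}(-1)^{\sigma}P_\sigma$ with $P_\sigma(e_{i_1j_1}\otimes\cdots\otimes e_{i_kj_k})=e_{i_{\sigma^{-1}(1)}j_1}\otimes\cdots\otimes e_{i_{\sigma^{-1}(k)}j_k}$, a reindexing of the summation variables yields
$$A_k\,T_1(u)\cdots T_k(u-k+1)=\sum_{a_i,\,b_i}t^{a_1\ldots a_k}_{b_1\ldots b_k}(u)\;e_{a_1b_1}\otimes\cdots\otimes e_{a_kb_k},$$
where $t^{a_1\ldots a_k}_{b_1\ldots b_k}(u)$ are precisely the quantum minors of Lemma~\ref{vid}; this computation is classical (see \cite{molev2}). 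Multiplying on the left by $C^{\otimes k}$ and using $e_{cc}e_{ab}=\delta_{ca}e_{ab}$ forces $c_l=a_l$, contributing the factor $\lambda_{a_1}\cdots\lambda_{a_k}$; taking the trace then sets $b_l=a_l$, so that
$$\tau_k(u,C)=\frac1{k!}\sum_{a_1,\ldots,a_k=1}^{n}\lambda_{a_1}\cdots\lambda_{a_k}\,t^{a_1\ldots a_k}_{a_1\ldots a_k}(u).$$

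To finish I would invoke two elementary properties of quantum minors: $t^{a_1\ldots a_k}_{a_1\ldots a_k}(u)=0$ whenever two of the indices $a_i$ coincide, and $t^{a_1\ldots a_k}_{a_1\ldots a_k}(u)$ is unchanged under simultaneously permuting its upper and lower indices by one and the same permutation, since the two sign factors cancel. Hence in the last sum only tuples with pairwise distinct entries contribute, and each subset $\{a_1<\cdots<a_k\}$ is counted $k!$ times with the same summand; this absorbs the factor $1/k!$ and gives the asserted formula. The equality of the two expressions for the quantum minor in the statement of the Lemma is then the usual consequence of the defining relations of $Y(\fgl_n)$, again as in \cite{molev2}.

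The computation is entirely routine. The only points requiring care are fixing the conventions for the action of $P_\sigma$ in the second displayed formula and keeping track of the two orderings of factors and of the arguments $u,u-1,\ldots,u-k+1$ in the quantum minor; there is no substantive obstacle.
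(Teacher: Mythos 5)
Your proof is correct and follows essentially the same route as the paper's (commented-out) argument: a direct evaluation of the trace, reducing to diagonal quantum minors and then using their skew-symmetry to restrict to increasing index tuples and cancel the $1/k!$. The only cosmetic difference is that you invoke the standard expansion of $A_k T_1(u)\cdots T_k(u-k+1)$ in quantum minors from \cite{molev2} instead of recomputing the matrix entries of the product by hand.
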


\subsection{Filtration on Yangian.}

We define an ascending filtration on $Y(\fgl_n)$ by setting the degree of the generators as
\begin{equation*}
\label{Degree}\deg t_{ij}^{(r)} = r.
\end{equation*}
Denote by $Y_r(\fgl_n)$ the vector subspace consisting of all elements with degrees not greater than $r$.
Consider the associated graded algebra
\label{filtration}
\begin{equation}
CY(\fgl_n) = \bigoplus_{r \geqslant 0} Y_r(\fgl_n) / Y_{r-1}(\fgl_n).
\end{equation}
There is the following analog of PBW-theorem for $Y(\fgl_n)$ (see \cite{molev}):
\begin{thm}
\label{PBW}$CY(\fgl_n)$ is the algebra of polynomials in the variables $\bar t_{ij}^{(r)}$, where $\bar t_{ij}^{(r)}$ is the image of $t_{ij}^{(r)}$ in $Y_r(\fgl_n) / Y_{r-1}(\fgl_n)$.
\end{thm}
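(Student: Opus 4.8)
The assertion has two halves: that the ordered monomials in the symbols $\bar t_{ij}^{(r)}$ span $CY(\fgl_n)$, so that it is a quotient of the polynomial algebra on these symbols; and that there are no relations among them. The plan is to get spanning by an explicit reordering argument inside $Y(\fgl_n)$, and freeness by comparing $Y(\fgl_n)$ with $U(\fgl_n[t])$ through a second, companion filtration.

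For spanning, the key point is that commutators drop the filtration degree by one: $[t_{ij}^{(r)},t_{kl}^{(s)}]\in Y_{r+s-1}(\fgl_n)$. This follows by induction on $r$ from the defining relations, which telescope to
\[
[t_{ij}^{(r)},t_{kl}^{(s)}]=\delta_{kj}t_{il}^{(r+s-1)}-\delta_{il}t_{kj}^{(r+s-1)}+\sum_{a=1}^{r-1}\bigl(t_{kj}^{(a)}t_{il}^{(r+s-1-a)}-t_{kj}^{(r+s-1-a)}t_{il}^{(a)}\bigr),
\]
the base case $r=1$ being $[t_{ij}^{(1)},t_{kl}^{(s)}]=\delta_{kj}t_{il}^{(s)}-\delta_{il}t_{kj}^{(s)}$, obtained from the relations with $r=0$ and the fact that $t_{ij}^{(0)}=\delta_{ij}$ is central. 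Every term on the right has degree $\le r+s-1$. Granting this, an induction on the pair (degree, length of monomial) shows that any monomial in the generators equals, modulo monomials of strictly smaller degree, a linear combination of ordered monomials: transposing an out-of-order adjacent pair replaces the monomial by the reordered one plus a commutator term (times the rest) of strictly smaller filtration degree, to which the induction hypothesis applies. Hence $CY(\fgl_n)$ is spanned by the ordered monomials in the $\bar t_{ij}^{(r)}$, is in particular commutative, and is a quotient of $\BC[\bar t_{ij}^{(r)}]$.

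It remains to rule out relations, which — by the standard fact that a basis compatible with a filtration descends to a basis of the associated graded — reduces to showing the ordered monomials in the $t_{ij}^{(r)}$ are linearly independent in $Y(\fgl_n)$ itself. Here I would introduce the companion filtration with $\deg' t_{ij}^{(r)}=r-1$; call its associated graded algebra $G$. Rereading the telescoped formula with this grading, the quadratic terms are of strictly smaller $\deg'$ than the linear ones, so in $G$ the symbols $\bar x_{ij}^{(p)}$ of $t_{ij}^{(p+1)}$ satisfy $[\bar x_{ij}^{(p)},\bar x_{kl}^{(q)}]=\delta_{kj}\bar x_{il}^{(p+q)}-\delta_{il}\bar x_{kj}^{(p+q)}$, which are precisely the relations of $\fgl_n[t]$ under $E_{ij}t^p\mapsto\bar x_{ij}^{(p)}$. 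Thus there is a surjection $\phi\colon U(\fgl_n[t])\twoheadrightarrow G$. If $\phi$ is an isomorphism, then classical PBW for $U(\fgl_n[t])$ makes the ordered monomials in the $\bar x_{ij}^{(p)}$ a basis of $G$, so the ordered monomials in the $t_{ij}^{(r)}$ have linearly independent $\deg'$-symbols, hence are linearly independent in $Y(\fgl_n)$; combined with the spanning already proved, they form a basis of $Y(\fgl_n)$, and passing to $\gr Y(\fgl_n)=CY(\fgl_n)$ their classes form a basis, which is exactly the claim.

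The heart of the matter — and the step I expect to be the main obstacle — is the injectivity of $\phi$, i.e. producing enough representations of $Y(\fgl_n)$ to separate ordered monomials. I would use the vector representations $\BC^n(a)$, on which $t_{ij}(u)$ acts by $\delta_{ij}+(u-a)^{-1}e_{ij}$ (so $t_{ij}^{(r)}\mapsto a^{r-1}e_{ij}$), and their tensor products $\BC^n(a_1)\otimes\cdots\otimes\BC^n(a_m)$ with pairwise distinct $a_p$: there the $\deg'$-leading part of $t_{ij}^{(r)}$ acts by $\sum_p a_p^{r-1}e_{ij}^{(p)}$ (the operator $e_{ij}$ in the $p$-th tensor slot), so these modules factor through the Lie algebra surjection $\fgl_n[t]\twoheadrightarrow\fgl_n^{\oplus m}$ given by evaluation at the $a_p$ (Lagrange interpolation), followed by $U(\fgl_n)^{\otimes m}\to\End(\BC^n)^{\otimes m}$. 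One then checks that for any finite collection of ordered monomials, taking $m$ large and the $a_p$ generic makes their images linearly independent; this is where a Vandermonde/interpolation argument in the $a_p$ together with PBW for $U(\fgl_n)$ enters, and it is the only genuinely computational step. (Alternatively, one may cite Drinfeld's theorem that $Y(\fgl_n)$ is a flat deformation of $U(\fgl_n[t])$, which makes $\phi$ an isomorphism outright, at the cost of a heavier input.) With $\phi$ an isomorphism, the assembly above completes the proof.
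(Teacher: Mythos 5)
The paper itself gives no argument for this statement: Theorem~\ref{PBW} is quoted from the literature (``see \cite{molev}''; it is Molev's PBW theorem for the Yangian), so the relevant comparison is with the standard proof in \cite{molev,molev2}, and your outline reproduces exactly that proof's architecture: the estimate $[Y_r(\fgl_n),Y_s(\fgl_n)]\subset Y_{r+s-1}(\fgl_n)$ obtained by telescoping the defining relations, which gives commutativity of $CY(\fgl_n)$ and spanning by ordered monomials; and the companion filtration $\deg' t_{ij}^{(r)}=r-1$, whose associated graded $G$ receives a surjection $\phi\colon U(\fgl_n[t])\to G$, reducing everything to the injectivity of $\phi$, i.e.\ to linear independence of the ordered monomials in $Y(\fgl_n)$ itself. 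All of these formal steps are correct as you state them.

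The genuine gap is precisely the step you flag: injectivity of $\phi$. The sentence ``one then checks that for any finite collection of ordered monomials, taking $m$ large and the $a_p$ generic makes their images linearly independent'' is where the entire content of the theorem sits, and it is not routine. Under the coproduct, $t_{ij}^{(r)}$ acts on $\BC^n(a_1)\otimes\cdots\otimes\BC^n(a_m)$ not by $\sum_p a_p^{r-1}e_{ij}^{(p)}$ but by that plus cross terms involving matrix units in several tensor slots, so your appeal to the ``$\deg'$-leading part'' of the action presupposes a filtration on these modules compatible with $\deg'$ that you have not defined, and the required triangularity/interpolation argument in the parameters $a_p$ has to be organized carefully (in \cite{molev2} this is the bulk of the proof of the PBW theorem, Theorem~1.4.1). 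So either carry out that computation, or do what you offer as an alternative and what the paper in effect does: cite the PBW/flatness theorem for $Y(\fgl_n)$ as input. With that input, the remainder of your argument — distinct ordered monomials in the $t_{ij}^{(r)}$ have distinct, linearly independent $\deg'$-symbols, hence form a basis of $Y(\fgl_n)$ compatible with the first filtration, hence their classes give a monomial basis of $CY(\fgl_n)$, which is therefore the polynomial algebra on the $\bar t_{ij}^{(r)}$ — is complete and correct.
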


The polynomial algebra $CY(\fgl_n)$ has a natural Poisson structure. For any two elements $x,y \in Y(\fgl_n)$ of degrees $p,q$ respectively the Poisson bracket of their images $\bar x,\bar y$ in $CY(\fgl_n)$ is
$$\{\bar x,\bar y\} = [x,y] \mod Y_{p+q-2}(\fgl_n).$$

\subsection{Some homomorphisms between the Yangians.}
Let us define two different embedding of $Y(\fgl_n)$ to $Y(\fgl_{n+k})$:
$$i_k: Y(\fgl_n) \to Y(\fgl_{n+k}); \ t^{(r)}_{ij} \mapsto t^{(r)}_{ij}$$
$$\varphi_k: Y(\fgl_n) \to Y(\fgl_{n+k}); \ t^{(r)}_{ij} \mapsto t^{(r)}_{k+i,k+j}$$
According to PBW-theorem these maps are injective.
Define a homomorphism
$$\pi_n: Y(\fgl_n) \to U(\fgl_n); \ t_{ij}(u) \mapsto \delta_{ij} + E_{ij}u^{-1}.$$ Here $E_{ij}$ are the standard generators of $\mathfrak{gl_n}$.
$\pi_n$ is a surjective homomorphism from $Y(\fgl_n)$ to $U(\fgl_n)$ known as \emph{evaluation} homomorphism.
By definition, put
$$\omega_n: Y(\fgl_n) \to Y(\fgl_n); \ T(u) \mapsto (T(-u-n))^{-1}.$$
It is well-known that $\omega_n$ is an involutive automorphism of $Y(\fgl_n)$.
We define a homomorphism
$$\psi_k = \omega_{n+k} \circ \varphi_k \circ \omega_n: Y(\fgl_n) \to Y(\fgl_{n+k}).$$
Note that $\psi_k$ is injective.

\begin{prop}\label{ipsi}\cite[Proposition 1.13.1]{molev2} $i_{k}(Y(\fgl_{n}))$ is centralized by $\psi_{n}(Y(\fgl_k))$ in $Y(\fgl_{n+k})$.
\end{prop}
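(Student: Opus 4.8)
The plan is to make both subalgebras explicit in terms of the matrix entries of $T(u)$ and of $T(u)^{-1}$, and then to reduce the claim to a single ``mixed'' $RTT$ relation together with a check of index ranges.

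First I would rewrite $\psi_n(Y(\fgl_k))$. Since $\omega_k$ is an automorphism of $Y(\fgl_k)$ we have $\psi_n(Y(\fgl_k))=\omega_{n+k}\bigl(\varphi_n(\omega_k(Y(\fgl_k)))\bigr)=\omega_{n+k}\bigl(\varphi_n(Y(\fgl_k))\bigr)$. By definition of $\varphi_n$, the subalgebra $\varphi_n(Y(\fgl_k))\subset Y(\fgl_{n+k})$ is generated by the coefficients of the entries $t_{pq}(u)$ with $n<p,q\leqslant n+k$, i.e.\ by the bottom-right $k\times k$ block of $T(u)$. Applying $\omega_{n+k}$, which sends $T(u)$ to $T(-u-n-k)^{-1}$, we conclude that $\psi_n(Y(\fgl_k))$ is generated by the coefficients of the entries of the bottom-right $k\times k$ block of $T(-u-n-k)^{-1}$. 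The substitution $u\mapsto -u-n-k$ performs only an invertible triangular change among the coefficients of each such series in powers of $u^{-1}$, so $\psi_n(Y(\fgl_k))$ is precisely the subalgebra generated by the coefficients of $\widetilde t_{pq}(u):=\bigl(T(u)^{-1}\bigr)_{pq}$ for $n<p,q\leqslant n+k$. On the other hand $i_k(Y(\fgl_n))$ is generated by the coefficients of $t_{ij}(u)$, $1\leqslant i,j\leqslant n$. Thus it is enough to show that $[\widetilde t_{pq}(v),\,t_{ij}(u)]=0$ whenever $1\leqslant i,j\leqslant n<p,q\leqslant n+k$.

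The main tool is the commutation relation between the entries of $T$ and those of $T^{-1}$. Writing the defining relations of $Y(\fgl_{n+k})$ in the equivalent $R$-matrix form $R(u-v)T_1(u)T_2(v)=T_2(v)T_1(u)R(u-v)$ with $R(x)=1-P/x$ ($P$ the permutation operator), and multiplying this relation by $T_2(v)^{-1}$ once from the right and once from the left, one obtains $\widetilde T_2(v)R(u-v)T_1(u)=T_1(u)R(u-v)\widetilde T_2(v)$, where $\widetilde T_2(v)$ is the matrix $(\widetilde t_{pq}(v))$ placed in the second tensor factor. Comparing the coefficients of $e_{ij}\otimes e_{pq}$ yields the mixed relation
$$[\widetilde t_{pq}(v),\,t_{ij}(u)]=\frac{1}{u-v}\Bigl(\delta_{iq}\sum_{s=1}^{n+k}\widetilde t_{ps}(v)\,t_{sj}(u)-\delta_{jp}\sum_{s=1}^{n+k}t_{is}(u)\,\widetilde t_{sq}(v)\Bigr)$$
(this is also recorded in \cite{molev2}); comparing coefficients of $u^{-r}v^{-s}$ on both sides, this is an honest identity among the generators. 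For the index ranges in question we have $i\leqslant n<q$ and $j\leqslant n<p$, so $\delta_{iq}=\delta_{jp}=0$ and the right-hand side vanishes. Hence every generator of $\psi_n(Y(\fgl_k))$ commutes with every generator of $i_k(Y(\fgl_n))$, which proves the proposition.

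I do not anticipate a real obstacle. The only steps requiring some care are the reparametrization bookkeeping in the first paragraph (an invertible triangular change of coefficients of a formal series) and writing the mixed $RTT$ relation with the Kronecker deltas in the correct slots; note that the final argument uses only that these two deltas are $\delta_{iq}$ and $\delta_{jp}$ with $i,j$ among the first $n$ indices and $p,q$ among the last $k$, so it is insensitive to normalization and sign conventions.
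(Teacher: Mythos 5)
Your proof is correct. The paper does not actually prove this proposition --- it is quoted from Molev's book (Proposition 1.13.1) --- and your argument is essentially the standard one given there: identify $\psi_n(Y(\fgl_k))$ with the subalgebra generated by the coefficients of the lower-right $k\times k$ block of $T(u)^{-1}$ (the shift $u\mapsto -u-n-k$ being an invertible triangular change of coefficients), and then invoke the mixed relation between entries of $T(u)$ and $T(v)^{-1}$, whose right-hand side is killed by $\delta_{iq}=\delta_{jp}=0$ for $i,j\leqslant n<p,q$. The only point to phrase carefully is the handling of $\frac{1}{u-v}$, which is not itself a series in $u^{-1},v^{-1}$; the cleanest fix is to multiply the mixed relation through by $(u-v)$, note that the right-hand side vanishes for your index ranges, and observe that $(u-v)f=0$ forces $f=0$ for $f\in Y(\fgl_{n+k})[[u^{-1},v^{-1}]]$ --- a cosmetic adjustment, not a gap.
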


We will need a bit more precise statement:

\begin{lem}
\label{commuteyang} The homomorphisms $i_{k}$ and $\psi_{n}$ define an embedding
$i_{k}\otimes\psi_{n}: Y(\fgl_{n}) \otimes Y(\fgl_k) \hookrightarrow Y(\fgl_{n+k})$.
\end{lem}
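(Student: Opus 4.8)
The plan is to upgrade Proposition~\ref{ipsi} from a commutation statement to an injectivity statement for the tensor product map. First I would verify that $i_k \otimes \psi_n$ is indeed a well-defined algebra homomorphism: by Proposition~\ref{ipsi} the images $i_k(Y(\fgl_n))$ and $\psi_n(Y(\fgl_k))$ commute inside $Y(\fgl_{n+k})$, so the universal property of the tensor product of algebras gives a homomorphism $Y(\fgl_n)\otimes Y(\fgl_k)\to Y(\fgl_{n+k})$ sending $x\otimes y\mapsto i_k(x)\psi_n(y)$. It remains to prove this map is injective.

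The main tool will be the PBW theorem (Theorem~\ref{PBW}): both source and target are filtered algebras whose associated graded are polynomial algebras, so it suffices to show that the associated graded map $\gr(i_k\otimes\psi_n): CY(\fgl_n)\otimes CY(\fgl_k)\to CY(\fgl_{n+k})$ is injective. For this I would track what $i_k$ and $\psi_n$ do on the level of the PBW generators $\bar t_{ij}^{(r)}$. The map $i_k$ is transparent: it sends $\bar t_{ij}^{(r)}$ ($1\le i,j\le n$) to $\bar t_{ij}^{(r)}$ in the ``top-left'' $n\times n$ block, preserving degree. For $\psi_k=\omega_{n+k}\circ\varphi_k\circ\omega_n$ one needs the effect of $\omega_n$ on the associated graded; since $\omega_n\colon T(u)\mapsto T(-u-n)^{-1}$, a direct computation (or the known formula for $\omega_n$ on generators, e.g. from \cite{molev2}) shows that on $CY(\fgl_n)$ the automorphism $\gr\omega_n$ sends $\bar t_{ij}^{(r)}$ to $-\bar t_{ij}^{(r)}$ plus terms that are polynomial in lower-degree generators, hence is a filtered-degree-preserving automorphism of the polynomial algebra $CY(\fgl_n)$. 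Composing, $\gr\psi_n$ sends the generators $\bar t_{ij}^{(r)}$ of $CY(\fgl_k)$ into the polynomial algebra generated by the $\bar t_{k+i,k+j}^{(r)}$ (the ``bottom-right'' $k\times k$ block of $CY(\fgl_{n+k})$), modulo lower-degree corrections, and still generates the corresponding polynomial subalgebra.

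Thus $\gr i_k$ lands in the polynomial subalgebra on the variables indexed by the top-left block, $\gr\psi_n$ lands in the polynomial subalgebra on the variables indexed by the bottom-right block, and these two sets of variables in $CY(\fgl_{n+k})$ are disjoint (the top-left block uses indices $1\le i,j\le n$, the bottom-right block uses indices $k+1\le i,j\le n+k$, and $n<k+1$ is not needed — what matters is that for a generator $\bar t_{ij}^{(r)}$ of the first block either $i\le n$ or $j\le n$, while a generator of the second block has both indices $>k$; these can coincide only in the overlap region, which is empty once one checks the index ranges carefully, the key point being $\{1,\dots,n\}\cap\{k+1,\dots,n+k\}=\varnothing$). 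Since $\gr i_k$ and $\gr\psi_n$ are each injective (being, up to lower-order corrections, coordinate inclusions of polynomial algebras) and their images are algebraically independent subalgebras generated by disjoint variable sets, the induced map from the tensor product is injective. Lifting back through the filtration, $i_k\otimes\psi_n$ is injective.

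The step I expect to be the main obstacle is controlling $\gr\psi_n$: one must check that $\omega_n$, despite being defined through the matrix inverse $T(-u-n)^{-1}$ and hence a priori mixing all degrees, acts on the associated graded in a way that preserves the filtration degree of each generator and does not destroy algebraic independence. This is where one genuinely uses the structure of the Yangian (the explicit recursion for the entries of $T(u)^{-1}$, or equivalently the formulas for $\omega_n$ on generators in \cite{molev2}); once that is in hand, the disjoint-variables argument is routine.
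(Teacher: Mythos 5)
Your proposal is essentially the paper's own proof: pass to the associated graded via the PBW theorem, observe that $\gr i_k$ sends generators to the top-left block generators while $\gr\psi_n$ sends $\bar t_{ij}^{(r)}$ to a bottom-right block generator plus a correction involving only generators with superscript $s<r$, and conclude injectivity from this triangular structure. The only blemish is an index slip: since $\psi_n$ uses $\varphi_n$, the bottom-right block is $\bar t_{n+i,n+j}^{(r)}$ with indices in $\{n+1,\dots,n+k\}$ (so the relevant disjointness is $\{1,\dots,n\}\cap\{n+1,\dots,n+k\}=\varnothing$), not $\{k+1,\dots,n+k\}$ as written, which would fail for $k<n$.
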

\begin{proof}
It suffices to show that the associated graded $\gr (i_{k}\otimes\psi_{n}): CY(\fgl_{n}) \otimes CY(\fgl_k) \to CY(\fgl_{n+k})$ is injective. But the latter is sends $t_{ij}^{(r)}\otimes1$ to $t_{ij}^{(r)}$ and $1\otimes t_{ij}^{(r)}$ to $t_{n+i,n+j}^{(r)}+L$, where $L$ is an expression of $t_{ij}^{(s)}$ with $s<r$. Hence it is clearly injective.
\end{proof}

\subsection{Centralizer construction.}
Consider the map
\begin{equation}
\Phi_k: Y(\fgl_n) \to U(\fgl_{n+k}) \ \text{given by} \ \Phi_k = \pi_{n+k} \circ \omega_{n+k} \circ i_k.
\end{equation}
From ~\cite[Proposition 8.4.2]{molev2} it follows that ${\rm Im} \, \Phi_k \subset U(\fgl_{n+k})^{\fgl_k}$. Here we use an embedding $$\mathfrak{gl}_k \to \mathfrak{gl}_{n+k}, \ E_{ij} \to E_{i+n, j+n}.$$ 

Let $A_0 = \mathbb{C}[\mathcal{E}_1, \mathcal{E}_2, \ldots]$ be the polynomial algebra of infinite many variables. Define a grading on $A_0$ by setting $\deg \mathcal{E}_i = i$. For any $k$ we have a surjective homomorphism
$$z_k: A_0 \to Z(U(\fgl_{n+k}));\ \mathcal{E}_i \to \mathcal{E}_i^{(n+k)}.$$
where $\mathcal{E}_i^{(n+k)}$, $i=1,2,3,\ldots$ are the following generators of $Z(U(\fgl_{n+k}))$ of degree $i$, see \cite[Section 8.2]{molev2}:
$$ 1+\sum\limits_{i=1}^{n+k}\mathcal{E}_iu^{-i}=\pi_{n+k}({\rm qdet } T(U)).
$$

Consider the algebra $Y(\fgl_n) \otimes A_0$. This algebra has a well-defined ascending filtration given by
$$\deg a \otimes b = \deg a + \deg b.$$

For any $k \geqslant 0$ we define homomorphisms of filtered algebras
$$\eta_k: Y(\fgl_n) \otimes A_0 \to U(\fgl_{n+k})^{\fgl_k}; \ a \otimes b \to \Phi_k(a) \cdot z_k(b)$$

Denote by  $(Y(\fgl_n) \otimes A_0)_N$ the $N$-th filtered component, i.e. the vector space of the elements of degree not greater than $N$. From \cite[Theorem 8.4.3]{molev2} we have:

\begin{thm}
\label{qasym}
The sequence $\{\eta_k\}$ is an asymptotic isomorphism. This means that for any $N$ there exists $K$ such that for any $k>K$ the restriction of $\eta_k$ to the $N$-th filtered component $(Y(\fgl_n) \otimes A_0)_N$ is an isomorphism of vector spaces $(Y(\fgl_n) \otimes A_0)_N \simeq U(\fgl_{n+k})^{\fgl_k}_N$.
\end{thm}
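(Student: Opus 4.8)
The plan is to reduce the statement to its analogue for the associated graded map, where it becomes a question of classical invariant theory for $GL_k$ that stabilizes in bounded degree. First note that $\eta_k$ is indeed a well-defined homomorphism of filtered algebras, since $\Phi_k$ and $z_k$ are algebra homomorphisms and $\mathrm{Im}\, z_k\subseteq Z(U(\fgl_{n+k}))$ commutes with $\mathrm{Im}\,\Phi_k$. A filtered homomorphism inducing an isomorphism on the associated graded in degrees $\leqslant N$ is an isomorphism on the $N$-th filtered component (five-lemma induction on the filtration degree), so it suffices to prove that $\gr\eta_k$ is a graded asymptotic isomorphism. By Theorem~\ref{PBW}, $\gr(Y(\fgl_n)\otimes A_0)=CY(\fgl_n)\otimes A_0$ is the polynomial algebra on the $\bar t^{(r)}_{ij}$ ($1\leqslant i,j\leqslant n$, $r\geqslant 1$, of degree $r$) and the $\mathcal{E}_i$ (of degree $i$), with Poincaré series $\prod_{d\geqslant 1}(1-q^d)^{-(n^2+1)}$. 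On the other side, the PBW filtration on $U(\fgl_{n+k})$ is $\fgl_k$-stable and $\fgl_k$ is reductive, so the Reynolds operator gives $\gr\big(U(\fgl_{n+k})^{\fgl_k}\big)=S(\fgl_{n+k})^{\fgl_k}=\BC[\fgl_{n+k}]^{GL_k}$.

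Next I would compute $\gr\eta_k$ explicitly. Unwinding $\Phi_k=\pi_{n+k}\circ\omega_{n+k}\circ i_k$ one gets $\Phi_k(t^{(r)}_{ij})=(E^r)_{ij}+(\text{lower order terms})$, where $E=(E_{ab})_{1\leqslant a,b\leqslant n+k}$ and $1\leqslant i,j\leqslant n$; hence $\gr\Phi_k(\bar t^{(r)}_{ij})=(X^r)_{ij}$, $X$ being the generic matrix on $\fgl_{n+k}$. Writing $X=\left(\begin{smallmatrix} A & B\\ C & D\end{smallmatrix}\right)$ with $A$ of size $n$ and $D$ of size $k$, the entry $(X^r)_{ij}$ with $i,j\leqslant n$ is a sum of products of the $A_{pq}$ and of the $GL_k$-invariant ``mixed'' polynomials $(BD^mC)_{pq}$, $m\geqslant 0$. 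Likewise $\gr z_k(\mathcal{E}_i)$ is the symbol of the $i$-th central generator of $U(\fgl_{n+k})$, which modulo the ideal generated by the entries of $B$ and $C$ equals $\tr(A^i)+\tr(D^i)$ up to a triangular change of generators, the corrections being again words in the $A_{pq}$ and $(BD^mC)_{pq}$.

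For surjectivity I would invoke the first fundamental theorem for $GL_k$ in this block presentation: $\BC[\fgl_{n+k}]^{GL_k}$ is generated by the $A_{ij}$, the mixed invariants $(BD^mC)_{ij}$ ($m\geqslant 0$), and the power traces $\tr(D^m)$ ($m\geqslant 1$). From the previous step an induction on $m$ shows that all $A_{ij}$ and $(BD^mC)_{ij}$ lie in the subalgebra generated by $\mathrm{Im}\,\gr\Phi_k$; then $\tr(D^i)=\gr z_k(\mathcal{E}_i)-\tr(A^i)-(\ldots)$ lies in the subalgebra generated by $\mathrm{Im}\,\gr\eta_k$, so $\gr\eta_k$ is surjective.

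Finally, fix $N$ and take $k>N$. By the second fundamental theorem for $GL_k$ every relation among the generators above has degree $\geqslant k+1>N$, so in degrees $\leqslant N$ the ring $\BC[\fgl_{n+k}]^{GL_k}$ is freely generated by the $A_{ij}$ (degree $1$), the $(BD^mC)_{ij}$ (degree $m+2$) and the $\tr(D^m)$ (degree $m$) — exactly $n^2+1$ generators in each degree, hence the same Poincaré series $\prod_{d\geqslant 1}(1-q^d)^{-(n^2+1)}$ as the source. A degree-preserving surjection of graded vector spaces with equal finite dimensions in each degree $\leqslant N$ is an isomorphism there, so $\gr\eta_k$, and therefore $\eta_k$, restricts to an isomorphism on the $N$-th filtered component once $k$ is large enough. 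The hardest part is the invariant-theoretic core: establishing the first and second fundamental theorems for $GL_k$ acting on $A\oplus B\oplus C\oplus D$ with the stated degree bound on the relations, and carefully matching the explicit images $\gr\Phi_k(\bar t^{(r)}_{ij})$ and $\gr z_k(\mathcal{E}_i)$ with these generators — the bookkeeping of lower-order terms, in both the Yangian filtration and the $GL_k$-weight grading, is where the real effort goes.
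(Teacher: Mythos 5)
The paper gives no proof of Theorem~\ref{qasym} at all: it is quoted from \cite[Theorem 8.4.3]{molev2} (going back to Olshanski's centralizer construction), so there is no internal argument to compare with. What you wrote is, in outline, a correct reconstruction of the standard proof from that literature, and it is essentially the same strategy as the cited source: pass to the associated graded, where $\gr\big(U(\fgl_{n+k})^{\fgl_k}\big)=S(\fgl_{n+k})^{\fgl_k}$ by reductivity of $\fgl_k$; compute symbols, where your formula $\gr\Phi_k(\bar t^{(r)}_{ij})=(X^r)_{ij}$ is right because $\Phi_k(t_{ij}(u))$ is the $(i,j)$-entry of $\bigl(1-E(u+n+k)^{-1}\bigr)^{-1}=\sum_{r\geqslant 0}E^r(u+n+k)^{-r}$; get surjectivity in bounded degree from Weyl--Procesi first fundamental theorem for $GL_k$ acting on the blocks $(B,C,D)$ (generators $A_{ij}$, $(BD^mC)_{ij}$, $\tr(D^m)$, recovered triangularly from the $(X^r)_{ij}$ and the symbols of the $\mathcal{E}_i$ via Newton's identities); and get injectivity from the matching Poincar\'e series once one knows there are no relations among these generators in degrees $\leqslant N<k+1$, which is the Procesi--Razmyslov second fundamental theorem (all relations arise from antisymmetrization over $k+1$ indices, hence have degree at least $k+1$). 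You correctly identify that the real content sits in these two invariant-theoretic inputs and in the bookkeeping of lower-order terms; granting them, the dimension count and the filtered-to-graded reduction close the argument. Two cosmetic cautions: the identification of $\gr z_k(\mathcal{E}_i)$ with $\tr(D^i)$ should be stated modulo the subalgebra generated by the $A_{pq}$, $(BD^mC)_{pq}$ and lower power sums (not modulo an ideal), and for fixed $N$ one should take $k\geqslant N$ so that the generators $\mathcal{E}_1,\ldots,\mathcal{E}_N$ of $A_0$ map to genuinely independent central elements of $U(\fgl_{n+k})$; neither affects the asymptotic statement.
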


According to Lemma~\ref{commuteyang}, there is a tensor product of two commuting Yangians $i_{m}(Y(\fgl_{n-m}))$ and $\psi_{n-m}(Y(\fgl_{m}))$ inside $Y(\fgl_n)$.  Let us see what happens with these two commuting Yangians under the centralizer construction map $\eta_{k}:Y(\fgl_n)\otimes A_0\to U(\fgl_{n+k})^{\fgl_k}$.  
The algebra $U(\fgl_{n+k})$ is generated by $E_{ij}, 1 \leqslant i,j \leqslant n+k$; $\fgl_k$ is generated by $E_{ij}, n+1 \leqslant i,j \leqslant n+k$. The subalgebra $U(\fgl_{m+k})\subset U(\fgl_{n+k})$ is generated by $E_{ij}, n-m+1 \leqslant i,j \leqslant n+k$. Hence $U(\fgl_{m+k})^{\fgl_k}$ and $U(\fgl_{n+k})^{\fgl_{m+k}}$ are naturally subalgebras in $U(\fgl_{n+k})^{\fgl_k}$.
\begin{lem}
The restriction of $\eta_k$ to $i_m(Y(\fgl_{n-m}))$ is $\eta_{k+m}:Y(\fgl_{n-m})\to U(\fgl_{n+k})^{\fgl_{k+m}}$.
The restriction of $\eta_k$ to $\psi_{n-m}(Y(\fgl_m))$ is $\eta_{k}:Y(\fgl_m)\to U(\fgl_{m+k})^{\fgl_k}$.
\end{lem}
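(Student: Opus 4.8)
The plan is to verify both statements at the level of the generators of the respective Yangians, using the explicit formulas for $i_m$, $\varphi_m$, $\psi_m$ and $\Phi_k$, and then invoke the fact that everything is a homomorphism of (filtered) algebras so that checking generators suffices.

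\textbf{First statement.} I would begin by unwinding the definitions. By construction $\eta_k$ restricted to $Y(\fgl_n)\subset Y(\fgl_n)\otimes A_0$ is $\Phi_k=\pi_{n+k}\circ\omega_{n+k}\circ i_k$, and $i_m:Y(\fgl_{n-m})\to Y(\fgl_n)$ sends $t^{(r)}_{ij}$ to $t^{(r)}_{ij}$ for $1\le i,j\le n-m$. Hence the composite $\eta_k\circ i_m:Y(\fgl_{n-m})\to U(\fgl_{n+k})$ equals $\pi_{n+k}\circ\omega_{n+k}\circ i_k\circ i_m=\pi_{n+k}\circ\omega_{n+k}\circ i_{k+m}$, where now $i_{k+m}$ is the inclusion $Y(\fgl_{n-m})\hookrightarrow Y(\fgl_{n+k})=Y(\fgl_{(n-m)+(k+m)})$; this is exactly the definition of $\eta_{k+m}$ for the Yangian $Y(\fgl_{n-m})$. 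The only extra point to note is that $\eta_{k+m}$ is designed to land in $U(\fgl_{n+k})^{\fgl_{k+m}}$, which is what we want (and is consistent with the already-known $\mathrm{Im}\,\Phi_{k+m}\subset U(\fgl_{n+k})^{\fgl_{k+m}}$ from \cite[Proposition 8.4.2]{molev2}). So the first assertion is essentially a bookkeeping identity $i_k\circ i_m=i_{k+m}$, and I would present it as such.

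\textbf{Second statement.} This one is the substantive part. We must show $\eta_k\circ\psi_{n-m}:Y(\fgl_m)\to U(\fgl_{n+k})$ coincides with the map $\eta_k:Y(\fgl_m)\to U(\fgl_{m+k})\subset U(\fgl_{n+k})$ built from the Yangian $Y(\fgl_m)$, where $U(\fgl_{m+k})$ is the subalgebra generated by $E_{ij}$ with $n-m+1\le i,j\le n+k$. Recall $\psi_{n-m}=\omega_{n+k}^{}\circ\varphi_{n-m}\circ\omega_m$ (here the outer $\omega$ has index $n+k$ since $\psi_{n-m}$ maps into $Y(\fgl_{m+(n-m)})=Y(\fgl_n)$, then we further push by... — more precisely I should be careful: $\psi_{n-m}$ maps $Y(\fgl_m)\to Y(\fgl_n)$, and then $\Phi_k=\pi_{n+k}\circ\omega_{n+k}\circ i_k$ maps $Y(\fgl_n)\to U(\fgl_{n+k})$). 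So $\eta_k\circ\psi_{n-m}=\pi_{n+k}\circ\omega_{n+k}\circ i_k\circ\omega_n\circ\varphi_{n-m}\circ\omega_m$. The key computation is to commute the $\omega$'s and the $i$'s/$\varphi$'s past each other. The relevant facts I would use are: (i) $i_k\circ\omega_n=\omega_{n+k}\circ\iota_k$ for the appropriate shifted embedding — actually the cleaner route is to use $\psi_k=\omega_{n+k}\circ\varphi_k\circ\omega_n$ and the identity (from \cite{molev2}) that $\omega_{n+k}\circ i_k=\psi'_k\circ\omega_n$ type relations; and (ii) $\varphi_{n-m}$ and $\varphi_k$ shift indices by $n-m$ and $k$ respectively, while $i_k$ is the ``top-left corner'' inclusion. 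I would track the images of $t^{(r)}_{ij}(u)$, $1\le i,j\le m$, through each map: $\omega_m$ inverts the $m\times m$ matrix $T(u)$; $\varphi_{n-m}$ relabels indices $i\mapsto i+(n-m)$; $\omega_n$ then inverts the $n\times n$ matrix but in a way that, on the block with indices $>n-m$, interacts with the rest; then $i_k$ includes, $\omega_{n+k}$ inverts the $(n+k)\times(n+k)$ matrix, and $\pi_{n+k}$ evaluates. The claim is that the net effect on the block $n-m+1\le i,j\le n+k$ reproduces exactly $\pi_{m+k}\circ\omega_{m+k}\circ i_k$ applied to $Y(\fgl_m)$, i.e. $\eta_k$ for $Y(\fgl_m)$.

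\textbf{Main obstacle.} The technical heart is the interaction of the automorphisms $\omega_n$ with the non-unital embeddings $i_k$ and $\varphi_{n-m}$: since $\omega$ is defined via matrix inversion $T(u)\mapsto T(-u-n)^{-1}$, and inversion of a block matrix does not act block-diagonally in general, one has to argue that the ``cross terms'' disappear. The right tool is Proposition~\ref{ipsi}: $i_k(Y(\fgl_n))$ and $\psi_n(Y(\fgl_k))$ commute, together with the quasi-determinant / Gauss decomposition formulas for the action of $\omega$ on quantum minors (\cite[Chapter 1]{molev2}), which show that inverting the big matrix restricts correctly to a trailing principal block when one inverts the relevant principal submatrix first. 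Concretely, I expect to invoke the identity that $\omega_{n+k}$ restricted to the subalgebra generated by $t^{(r)}_{ij}$ with $i,j>n-m$ agrees with $\omega_{m+k}$ for the corresponding smaller Yangian — this is a standard but slightly delicate fact about Yangian anti-automorphisms and Gauss decomposition, and verifying it (or citing it precisely from \cite{molev2}) is the step I would spend the most care on. Once that is in hand, both statements follow by composing the explicit generator formulas and checking they agree term by term; since all maps are algebra homomorphisms, agreement on the generators $t^{(r)}_{ij}$ of $Y(\fgl_{n-m})$ resp. $Y(\fgl_m)$ suffices.
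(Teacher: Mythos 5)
Your first half is fine and is exactly the paper's argument: it is the bookkeeping identity $i_k\circ i_m=i_{k+m}$, so $\eta_k\circ i_m=\pi_{n+k}\circ\omega_{n+k}\circ i_{k+m}=\eta_{k+m}$ on $Y(\fgl_{n-m})$.

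For the second half there is a genuine gap: the identity you plan to rest the proof on --- that $\omega_{n+k}$ restricted to the subalgebra generated by the $t_{ij}^{(r)}$ with $i,j>n-m$ agrees with $\omega_{m+k}$ of the smaller Yangian --- is false. That subalgebra is $\varphi_{n-m}(Y(\fgl_{m+k}))$, and $\omega_{n+k}$ does not even preserve it: from the definition $\psi_{n-m}=\omega_{n+k}\circ\varphi_{n-m}\circ\omega_{m+k}$ one gets $\omega_{n+k}\circ\varphi_{n-m}=\psi_{n-m}\circ\omega_{m+k}$, so $\omega_{n+k}$ carries $\varphi_{n-m}(Y(\fgl_{m+k}))$ onto $\psi_{n-m}(Y(\fgl_{m+k}))$, a different subalgebra. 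Your claimed identity would amount to $\varphi_{n-m}=\psi_{n-m}$, i.e.\ to the assertion that inverting the full $T$-matrix produces no cross terms on the trailing block; this fails already for $\varphi_1(Y(\fgl_1))\subset Y(\fgl_2)$, where $\omega_2(t_{22}(u))$ is the $(2,2)$ entry of $T(-u-2)^{-1}$ and involves all four series $t_{ij}$, whereas $\varphi_1(\omega_1(t_{11}(u)))=t_{22}(-u-1)^{-1}$. The missing ingredient, and the route the paper takes, is the commutation of the embeddings themselves: $i_k\circ\psi_{n-m}=\psi_{n-m}\circ i_k$ as homomorphisms $Y(\fgl_m)\to Y(\fgl_{n+k})$ (this is the one nontrivial input, \cite[Lemma 1.11.2]{molev2}). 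Granting it, no Gauss-decomposition analysis is needed: $\eta_k\circ\psi_{n-m}=\pi_{n+k}\circ\omega_{n+k}\circ i_k\circ\psi_{n-m}=\pi_{n+k}\circ\omega_{n+k}\circ\psi_{n-m}\circ i_k=\pi_{n+k}\circ\varphi_{n-m}\circ\omega_{m+k}\circ i_k$, and $\pi_{n+k}\circ\varphi_{n-m}$ is just $\pi_{m+k}$ followed by the corner inclusion $U(\fgl_{m+k})\subset U(\fgl_{n+k})$, which yields $\eta_k$ for $Y(\fgl_m)$. Note also that Proposition~\ref{ipsi} cannot substitute for this intertwining statement: it only says that the two images commute elementwise, which is strictly weaker than the equality of compositions you need.
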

\begin{proof}
The first statement is immediate from the following commutative diagram:
\begin{center}
\[ \begin{diagram}
\node{Y(\fgl_{n-m})} \arrow[2]{e,t}{i_{m}}
\arrow{ese,b}{i_{k+m}}
\node[2]{Y(\fgl_n)} \arrow{s,r}{i_k}\\
\node[3]{Y(\fgl_{n+k})}
\end{diagram}\]
\end{center}
To prove second statement, consider another diagram:
\begin{center}
\[ \begin{diagram}
\node{Y(\fgl_m)}
\arrow{e,t}{\psi_{n-m}}
\arrow{s,r}{i_k}
\node{Y(\fgl_n)}
\arrow{s,r}{i_k} \\
\node{Y(\fgl_{m+k})} \arrow{e,t,..}{\psi_{n-m}}
\arrow{s,r}{\omega_{m+k}}
\node{Y(\fgl_{n+k})} \arrow{s,r}{\omega_{n+k}}\\
\node{Y(\fgl_{m+k})} \arrow{e,t}{\varphi_{n-m}}
\node{Y(\fgl_{n+k})}
\end{diagram}\]
\end{center}

The lower square is commutative by definition so it is enough to prove that upper square is commutative as well. But it follows from \cite[Lemma 1.11.2]{molev2}.
\end{proof}

\subsection{Definition of limit subalgebras.}\label{def-lim}
It is possible to construct new commutative subalgebras as limits of Bethe subalgebras.
We describe here what the "limit" means.
Let $T^{reg}$ be the set of regular elements of the maximal torus $T$ of $GL_n$. Let $C$ be an element of $T^{reg}$. Consider $B_r(C):= Y_r(\fgl_n) \cap B(C)$. In \cite{nazol} it was proved that the images of the coefficients of $\tau_1(u,C), \ldots, \tau_n(u,C)$ freely generate the subalgebra $\bar B(C) = \gr B(C)\subset CY(\fgl_n)$. Hence the dimension $d(r)$ of $B_r(C)$ does not depend on $C$.
Therefore for any $r \geqslant 1$ we have a map $\theta_r$ from $T^{reg}$ to $\bigtimes_{i=1}^r {\rm Gr}(d(i),\dim Y_{i}(\fgl_n))$ such that $C \to (B_1(C), \ldots, B_r(C))$.
Denote the closure of $\theta_r(T^{reg})$ (with respect to Zariski topology) by $Z_r$. There are well-defined projections  $\rho_r: Z_r \to Z_{r-1}$ for all $r \geqslant 1$. The inverse limit $Z = \varprojlim Z_r$ is well-defined as a pro-algebraic scheme and is naturally a parameter space for some family of commutative subalgebras which extends the family of Bethe subalgebras.

Indeed, any point $X\in Z$ is a sequence $\{x_r\}_{r \in \mathbb{N}}$ where $x_r \in Z_r$ such that $\rho_r(x_r) = x_{r-1}$. Every $x_r$ is a point in $\bigtimes_{i=1}^r {\rm Gr}(d(i),\dim Y_{i}(\fgl_n))$ i.e. a collection of subspaces $B_{r,i}(X)\subset Y_{i}(\fgl_n)$ such that $B_{r,i}(X)\subset B_{r,i+1}(X)$ for all $i<r$. Since $\rho_r(x_r) = x_{r-1}$ we have $B_{r,i}(X)= B_{r-1,i}(X)$ for all $i<r$. Let us define the subalgebra corresponding to $X\in Z$ as $B(X):= \bigcup_{r=1}^{\infty} B_{r,r}(X)$. We claim that this subalgebra is commutative and we call it {\em limit subalgebra}. Indeed, $B(X)$ is a commutative subalgebra because being a commutative subalgebra is a Zariski-closed condition: we have $B_r(C)\cdot B_s(C)\subset B_{r+s}(C)$ for all $C\in T^{reg}$ for all $r,s$ and this product is commutative, hence we get the same for the product $B_{r,r}(X)\cdot B_{s,s}(X)=B_{r+s,r}(X)\cdot B_{r+s,s}(X)\subset B_{r+s,r+s}(X)$. Note that this subalgebra has the same Poincare series as $B(C)$.

\begin{rem}
In \cite{shuvalov} the limits shift of argument subalgebras are defined in the analytic topology, just as limits of $1$-parametric families of generic shift of argument subalgebras. But it is well-known that the closure of an affine algebraic variety in a complex projective space with respect to Zariski topology coincides with its closure with respect to the analytic topology. So the closure of the parameter space is the same for both definitions of the limit. In our work we use both approaches.
\end{rem}

\section{Moduli spaces of stable rational curves}

\subsection{The space $\overline{M_{0,n+1}}$.} Let $\overline{M_{0,n+1}}$ denote the Deligne-Mumford space of stable rational curves with $n+1$ marked points. The points of $\overline{M_{0,n+1}}$ are isomorphism classes of curves of genus $0$, with $n+1$ ordered marked points and possibly with nodes, such that each component has at least $3$ distinguished points (either marked points or nodes). One can represent the combinatorial type of such a curve as a tree with $n+1$ leaves with inner vertices representing irreducible components of the corresponding curve, inner edges corresponding to the nodes and the leaves corresponding to the marked points. Informally, the topology of $\overline{M_{0,n+1}}$ is determined by the following rule: when some $k$ of the distinguished points (marked or nodes) of the same component collide, they form a new component with $k+1$ distinguished points (the new one is the intersection point with the old component). In particular, the tree describing the combinatorial type of the less degenerate curve is obtained from the tree corresponding to the more degenerate one by contracting an edge.

The space $\overline{M_{0,n+1}}$ is a smooth algebraic variety. It can be regarded as a compactification of the configuration space $M_{0,n+1}$ of ordered $(n+1)$-tuples $(z_1,z_2,\ldots,z_{n+1})$ of pairwise distinct points on $\BC\BP^1$ modulo the automorphism group $PGL_2(\BC)$. Since the group $PGL_2(\BC)$ acts transitively on triples of distinct points, we can fix the $(n+1)$-th point to be $\infty\in\BC\BP^1$ and fix the sum of coordinates of other points to be zero. Then the space $M_{0,n+1}$ gets identified with the quotient ${\rm Conf}_n / \BC^*$ where ${\rm Conf}_n:=\{(z_1,\ldots,z_n)\in\BC^n\ |\ z_i\ne z_j,\ \sum\limits_{i=1}^n z_i=0\}$, and the group $\BC^*$ acts by dilations. Under this identification of $M_{0,n+1}$, the space $\overline{M_{0,n+1}}$ is just the GIT quotient of the iterated blow-up of the subspaces of the form $\{z_{i_1}=z_{i_2}=\ldots=z_{i_k}\}$ in $\BC^{n-1}$ by the natural $\BC^*$ action by dilations.

\subsection{Stratification and operad structure on $\overline{M_{0,n+1}}$.}

The space $\overline{M_{0,n+1}}$ is stratified as follows. The strata are indexed by the combinatorial types of stable rational curves, i.e. by rooted trees with $n$ leaves colored by the marked points $z_1,\ldots,z_n$ (the root is colored by $z_{n+1}=\infty$). The stratum corresponding to a tree $T$ lies in the closure of the one corresponding to a tree $T'$ if and only if $T'$ is obtained from $T$ by contracting some edges.

The spaces $\ol{M_{0,n+1}}$ form a topological operad. This means that one can regard each point of the space $\ol{M_{0,n+1}}$ as an $n$-ary operation with the inputs at marked points $z_1,\ldots,z_n$ and the output at $\infty$. Then one can substitute any operation of this form to each of the inputs. More precisely, for any partition of the set $\{1,\ldots,n\}$ into the disjoint union of subsets $M_1,\ldots,M_k$ with $|M_i|=m_i\ge1$ there is a natural substitution map $\gamma_{k;M_1,\ldots,M_k}:\ol{M_{0,k+1}}\times\prod\limits_{i=1}^k\ol{M_{0,m_i+1}}\to\ol{M_{0,n+1}}$ which attaches the $i$-th curve $C_i\in\ol{M_{0,m_i+1}}$ to the $i$-th marked point of the curve $C_0\in\ol{M_{0,k+1}}$ by gluing the $m_{i}+1$-th marked point of each $C_i$ with the $i$-th marked point of $C_0$. In fact all substitution maps $\gamma_{k;M_1,\ldots,M_k}$ are compositions of the elementary ones with $m_1=\ldots=m_{k-1}=1$.

The compositions of the substitution maps are indexed by rooted trees describing the combinatorial type of the (generic) resulting curves. In particular, each stratum of $\ol{M_{0,n+1}}$ is just the image of the product of the open strata of appropriate $\prod\ol{M_{0,m+1}}$ under some composition of substitution maps. In particular, strata of codimension $1$ are just the images of the open strata of $\ol{M_{0,k+1}}\times\ol{M_{0,n-k+1}}$ under the elementary substitution maps and can be obtained as the limit set when the points $z_1,\ldots,z_k$ collide and other points stay isolated. Since each substitution map is a composition of the elementary ones, generic point of each stratum can be obtained iterating this limiting procedure.

\section{Shift of argument subalgebras.}

\subsection{The algebras $F(C)$.} Let $\fg$ be a reductive Lie algebra. To any $C\in\fg^*$ one can assign a Poisson-commutative subalgebra in $S(\fg)$ with respect to the standard Poisson bracket (coming from the universal enveloping algebra $U(\fg)$ by the PBW theorem).
Let $ZS(\fg)=S(\fg)^{\fg}$ be the center of $S(\fg)$ with
respect to the Poisson bracket. The algebra $F(C)\subset S(\fg)$
generated by the elements $\partial_{C}^n\Phi$, where $\Phi\in
ZS(\fg)$, (or, equivalently, generated by central elements of
$S(\fg)=\BC[\fg^*]$ shifted by $tC$ for all $t\in\BC$) is
Poisson-commutative and has maximal possible transcendence degree. More precisely, we have the following

\begin{thm}\label{mf} ~\cite{mf} For regular semisimple $C\in\fg$ the algebra $F(C)$
is a free commutative subalgebra in $S(\fg)$ with
$\frac{1}{2}(\dim\fg+\rk\fg)$ generators (this means that
$F(C)$ is a commutative subalgebra of maximal possible
transcendence degree). One can take the elements
$\partial_{C}^n\Phi_k$, $k=1,\dots,\rk\fg$,
$n=0,1,\dots,\deg\Phi_k$, where $\Phi_k$ are basic
$\fg$-invariants in $S(\fg)$, as free generators of $F(C)$.
\end{thm}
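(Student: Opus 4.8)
The statement is the classical Mishchenko--Fomenko theorem, and the plan is to prove it by the ``argument shift'' method. Identify $\fg^*$ with $\fg$ by means of an invariant bilinear form, so that $S(\fg)=\BC[\fg]$ and the Lie--Poisson bracket at $x\in\fg$ reads $\{f,g\}(x)=\langle x,[d_xf,d_xg]\rangle$. For a Casimir $\Phi\in ZS(\fg)$ the shifted polynomial $x\mapsto\Phi(x+tC)$ expands as $\sum_n\frac{t^n}{n!}\,\partial_C^n\Phi$, so $F(C)$ is precisely the algebra generated by all the coefficients of $\Phi_k(x+tC)$ for $k=1,\dots,\rk\fg$, where $\Phi_1,\dots,\Phi_{\rk\fg}$ are basic invariants.

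\emph{Step 1 (Poisson commutativity).} Since $\Phi$ is a Casimir, $[d_y\Phi,y]=0$ for every $y\in\fg$. Substituting $y=x+tC$ and comparing coefficients of powers of $t$ gives the standard compatibility (pencil) identity, from which one extracts $\{\partial_C^m\Phi,\partial_C^n\Psi\}=0$ for all $m,n\ge0$ and all $\Phi,\Psi\in ZS(\fg)$; hence $F(C)$ is Poisson-commutative. Equivalently, the Lie--Poisson bracket and the constant bracket $\{\,,\,\}_C$ are compatible and $F(C)$ is generated by the Casimirs of the whole pencil.

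\emph{Step 2 (reduction of algebraic independence to a rank count).} The nonconstant elements among the proposed generators $\partial_C^n\Phi_k$ number $\sum_k\deg\Phi_k=\tfrac12(\dim\fg+\rk\fg)$, which by the general bound for Poisson-commutative subalgebras of $S(\fg)$ (index $\rk\fg$ plus half the dimension of a generic coadjoint orbit) is the maximal possible transcendence degree. So it is enough to prove these homogeneous elements are algebraically independent, and this will automatically give that they freely generate $F(C)$. Algebraic independence follows if their differentials are linearly independent at a generic $x\in\fg$. Since $d_x\bigl(\Phi_k(\,\cdot+tC)\bigr)=(d\Phi_k)(x+tC)$, we have
$$\mathrm{span}\{\,d_x(\partial_C^n\Phi_k)\ :\ k,\,n\,\}=\mathrm{span}\{\,(d\Phi_k)(x+tC)\ :\ 1\le k\le\rk\fg,\ t\in\BC\,\}.$$
For $x+tC$ regular the vectors $(d\Phi_1)(x+tC),\dots,(d\Phi_{\rk\fg})(x+tC)$ form a basis of the centralizer $\fz(x+tC)$, so, discarding the finitely many special values of $t$, the span in question equals $\sum_{t\in\BC}\fz(x+tC)$.

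\emph{Step 3 (the dimension count --- the main obstacle --- and conclusion).} It remains to show that for $C$ regular semisimple and $x\in\fg$ generic one has $\dim\sum_{t}\fz(x+tC)\ge\tfrac12(\dim\fg+\rk\fg)$ (the reverse inequality is automatic, the span being isotropic for the Poisson bivector at $x$, whose rank is $\dim\fg-\rk\fg$). Geometrically this says that on a generic coadjoint orbit the restrictions of the generators span a Lagrangian subspace of the tangent space. One way to see it: as $t\to\infty$ the line $x+\BC C$ approaches $C$, so $\sum_t\fz(x+tC)\supset\fz(C)=\fh$, the Cartan through $C$; decomposing $\fg=\fh\oplus\bigoplus_\alpha\fg_\alpha$ and $x=x_0+\sum_\alpha x_\alpha$ one computes $\fz(x+tC)$ explicitly and checks the required half-dimensionality. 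More efficiently, Bolsinov's completeness criterion reduces the claim to the statement that for generic $x$ every point of the line $x+\BC C$ (and $C$ itself, ``at infinity'') is regular in $\fg$; this holds because $C$ is regular and the non-regular locus of $\fg$ has codimension $3$, so a generic line in the fixed direction $C$ avoids it. This is the only step that genuinely uses regularity of $C$. Combining Steps 2 and 3, the homogeneous elements $\partial_C^n\Phi_k$ ($k=1,\dots,\rk\fg$, $n$ in the stated range, the top one being a nonzero constant) are algebraically independent, their number matches the transcendence degree $\tfrac12(\dim\fg+\rk\fg)$ of $F(C)$, hence they are free polynomial generators and $F(C)$ is a free Poisson-commutative subalgebra of maximal transcendence degree, as claimed.
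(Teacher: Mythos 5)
The paper does not prove this statement at all: it is quoted verbatim from Mishchenko--Fomenko (the citation \cite{mf}), and the only trace of an argument in the source is a commented-out appendix reproducing the Magri--Lenard compatibility argument for commutativity. So there is nothing in the paper to compare against step by step; judged on its own, your proposal is a correct rendition of the standard argument-shift proof. Step 1 is the usual pencil-of-brackets argument; Step 2 correctly reduces freeness to algebraic independence of the $\sum_k\deg\Phi_k=\tfrac12(\dim\fg+\rk\fg)$ nonconstant shifts, using that $ZS(\fg)=\BC[\Phi_1,\dots,\Phi_{\rk\fg}]$ so these elements do generate $F(C)$, and using the isotropy bound for the rank-$(\dim\fg-\rk\fg)$ Poisson bivector at a generic point; Step 3 is the genuine content, and your ``more efficient'' route is the right one: by Kostant's theorem the differentials $(d\Phi_k)(x+tC)$ span $\fz(x+tC)$ at regular points, and completeness of $\sum_t\fz(x+tC)$ follows (Bolsinov's criterion, or the original Mishchenko--Fomenko computation) from the fact that the irregular locus has codimension $3$, so for regular $C$ a generic line in direction $C$, together with its point at infinity, lies in the regular set. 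Two small caveats: your first sketch in Step 3 (``$\sum_t\fz(x+tC)\supset\fz(C)$ as $t\to\infty$'') needs a Grassmannian-limit argument to be honest, since a span of subspaces over finite $t$ contains the limit plane only because the set of $\rk\fg$-planes inside a fixed subspace is closed and $\dim\fz(x+tC)$ is constant near $t=\infty$ when $C$ is regular; and you should state explicitly that you are invoking Kostant's regularity theorem and the codimension-$3$ fact rather than proving them, since these are exactly the nontrivial inputs that the original paper of Mishchenko and Fomenko supplies. With those references made explicit, the argument is complete and is the standard proof of the cited theorem.
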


\begin{rem} In particular for $\fg=\fgl_n$, the algebra $F(C)$ has $n-k$ generators of degree $k$ for each $k=1,\ldots,n$.
\end{rem}

Let $\fh\subset\fg$ be a Cartan subalgebra of the Lie algebra
$\fg$. Denote by $\Delta_+$ the set of positive roots of
$\fg$.  We assume that $C$ is a regular semisimple
element of the Cartan subalgebra $\fh\subset\fg=\fg^*$. The
linear and quadratic part of the subalgebras $F(C)$
can be described as follows (see \cite{vinberg}):
\begin{gather*}F(C)\cap\fg=\fh,\\ F(C)\cap S^2(\fg)=S^2(\fh)\oplus
Q_C,\ \text{where}\
Q_C=\{\sum\limits_{\alpha\in\Delta_+}\frac{\langle\alpha,h\rangle}{\langle\alpha,C\rangle}e_{\alpha}e_{-\alpha}|h\in\fh\}.\end{gather*}

The results of Vinberg \cite{vinberg} and Shuvalov \cite{shuvalov} imply that the limit subalgebra of the family $F(C)$ is uniquely determined by its quadratic component. Hence the variety parameterizing limit shift of argument subalgebras in $S(\fg)$ is just the closure of the family $Q_C$ in the Grassmannian $Gr(\rk\fg, \dim S^2(\fg))$. In \cite{shuvalov} Shuvalov described the closure of the family of
subalgebras $F(C)\subset S(\fg)$ under the condition
$C\in \fh^{reg}$ (i.e., for regular $C$ in the fixed Cartan
subalgebra). In particular, the following holds:

\begin{thm}\label{shuvalov}\cite{shuvalov} Suppose that $C(t)=C_0+tC_1+t^2C_2+\dots\in \fh^{reg}$
for generic $t$. Let $\fz_{\fg}(C_i)$ be the centralizer of $C_i$ in $\fg$.  Set $\fz_k=\bigcap\limits_{i=0}^k\fz_{\fg}(C_i)$,
$\fz_{-1}=\fg$. Then we have
\begin{enumerate}
\item the subalgebra $\lim\limits_{t\to0}F(C(t))\subset S(\fg)$
is generated by elements of $S(\fz_k)^{\fz_k}$ and their
derivatives of any order along $C_{k+1}$ for all $k=-1,0,\ldots$. \item
$\lim\limits_{t\to0}F(C(t))$ is a free commutative algebra. 
\end{enumerate}
\end{thm}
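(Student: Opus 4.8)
The plan is to identify the limit $F_\infty:=\lim_{t\to 0}F(C(t))$ with the explicit candidate
$$\widetilde F:=\Bigl\langle\ \partial_{C_{k+1}}^{\,n}\Psi\ :\ \Psi\in S(\fz_k)^{\fz_k},\ n\ge 0,\ k\ge -1\ \Bigr\rangle\subset S(\fg)$$
and to read off freeness from these generators. Two preliminary remarks reduce the problem. First, by Theorem~\ref{mf} the algebra $F(C)$ is free with generators of prescribed degrees for every regular semisimple $C\in\fh$, so the graded dimensions $\dim\bigl(F(C)\cap S^{\le r}(\fg)\bigr)$ are constant along the family; hence $F_\infty$ is a well-defined graded subspace with the same Poincaré series as $F(C)$, and it is Poisson-commutative, being a limit of Poisson-commutative subalgebras. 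Second, if $A\subseteq B$ are connected graded subalgebras of $S(\fg)$ with the same Poincaré series then $A=B$. So it suffices to prove (a) $\widetilde F\subseteq F_\infty$ and (b) $\widetilde F$ is a free polynomial algebra whose Poincaré series equals that of $F(C)$ for regular $C$; then $\widetilde F=F_\infty$, and both assertions of the theorem follow. I would prove (a) and (b) simultaneously by induction on $\dim\fg$, equivalently on the length of the flag $\fg=\fz_{-1}\supseteq\fz_0\supseteq\fz_1\supseteq\cdots$, which terminates at $\fh$ because $\bigcap_i\fz_{\fg}(C_i)=\fz_{\fg}(C(t))=\fh$ for generic $t$. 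Absorbing a possible central summand of $C_0$ into the higher $C_i$, we may assume $\fz_0\subsetneq\fg$; the base case $C_0$ regular is just continuity of the family at regular points, where $F_\infty=F(C_0)=\widetilde F$.

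For the inductive step I would write $C(t)=C_0+t\,D(t)$ with $D(t)=C_1+tC_2+\cdots$, all $C_i\in\fh\subset\fz_0$. The $k=-1$ generators lie in $F_\infty$ for free: $\partial_{C_0}^{\,n}\Phi=\lim_{t\to0}\partial_{C(t)}^{\,n}\Phi$, and since $\partial_{C(t)}^{\,n}\Phi\in F(C(t))$ stays in the relevant finite-dimensional Grassmann cell, its limit lies in $F_\infty$; thus $F_{\fg}(C_0)\subseteq F_\infty$. The remaining generators of $\widetilde F$ are exactly the generators of the candidate algebra $\widetilde F_{\fz_0}$ attached to the reductive Lie algebra $\fz_0$ and the curve $D(t)$ (note $D(t)$ is regular in $\fz_0$ for generic $t$, since $\fz_{\fz_0}(D(t))=\fz_0\cap\fz_{\fg}(C(t))=\fh$); by the inductive hypothesis $\widetilde F_{\fz_0}=\lim_{t\to0}F_{\fz_0}(D(t))$ and it is free with the Mishchenko--Fomenko Poincaré series of $\fz_0$. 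Part (a) thus reduces to the inclusion
$$\lim_{t\to 0}F_{\fz_0}\bigl(D(t)\bigr)\ \subseteq\ F_\infty,$$
which is the heart of the argument. To establish it I would use the \emph{higher} argument shifts $\partial_{C(t)}^{\,m}\Phi$ of $\fg$-invariants $\Phi$ with $m$ strictly larger than the $C_0$-degree of $\Phi$: the value at $t=0$ vanishes, and dividing by the appropriate power of $t$ produces a new element of $F_\infty$ in the limit. The claim is that, as $\Phi$ runs over $S(\fg)^{\fg}$ and $m$ over the admissible range, these rescaled leading terms are, up to lower-order corrections, the $C_1$-argument-shifts of generators of $S(\fz_0)^{\fz_0}$, and that iterating the rescaling recovers all of $\lim_{t\to0}F_{\fz_0}(D(t))$. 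At bottom this uses the classical restriction-of-invariants phenomenon (in the spirit of Kostant and of the Mishchenko--Fomenko construction at non-regular points): $S(\fz_0)^{\fz_0}$ is generated by the top-$s$-degree components of the restrictions to $\fz_0^{*}$ of the functions $\Phi(\xi+sC_0)$, $\Phi\in S(\fg)^{\fg}$. Since $F(C(t))$ is a subalgebra, once all generators of $\widetilde F$ lie in $F_\infty$ so does $\widetilde F$.

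For (b), $\widetilde F$ is generated by the two commuting subalgebras $F_{\fg}(C_0)$ and $\widetilde F_{\fz_0}$ (commutativity is automatic since both sit inside the Poisson-commutative $F_\infty$). Freeness and the exact Poincaré series come from a dimension count that telescopes along the flag: at the passage from $\fz_k$ to $\fz_{k+1}$ the newly contributed generators are those of $F_{\fz_k}(C_{k+1})$ not already present from shallower levels, and one checks that their differentials at a generic point of $\fg^{*}$ are linearly independent of the previous ones — this gives algebraic independence and fixes the number of generators. A telescoping sum of the defects $\tfrac12(\dim\fz_k-\rk\fz_k)$ along the flag then cancels the defect of $F_{\fg}(C_0)$, so $\widetilde F$ is generated by $\tfrac12(\dim\fg+\rk\fg)$ homogeneous elements whose degrees match Theorem~\ref{mf}; being a domain generated by that many homogeneous elements of that transcendence degree, $\widetilde F$ is free polynomial with the Poincaré series of $F(C)$. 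Combined with (a) and the Poincaré-series comparison, $\widetilde F=F_\infty$, which proves both statements.

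I expect the decisive obstacle to be the leading-term analysis in part (a): controlling which rescaled limits of higher argument shifts of $\fg$-invariants survive, and proving that they are precisely the argument shifts of $\fz_0$-invariants — i.e. making the passage from $\fg$-invariants to $\fz_0$-invariants precise and then iterating it cleanly along the whole flag. A secondary difficulty is the independence-of-differentials computation that underlies the freeness count in (b).
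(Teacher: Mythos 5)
This statement is quoted by the paper from \cite{shuvalov} and is not proved in the text, so there is no internal proof to compare with; your plan does, however, follow essentially the strategy of Shuvalov's original argument (induction along the flag $\fg=\fz_{-1}\supset\fz_0\supset\fz_1\supset\cdots$, extraction of the deeper levels from rescaled leading terms of shifted invariants, and a Poincar\'e-series comparison to force equality with the candidate algebra). But as written the proposal has genuine gaps exactly at the two places you flag, and these are not routine verifications — they are the mathematical content of the theorem. The inclusion $\lim_{t\to0}F_{\fz_0}(D(t))\subseteq F_\infty$ is only asserted: you need (i) the fact that the top-degree-in-$s$ components of the restrictions of $\Phi(\xi+sC_0)$, $\Phi\in S(\fg)^{\fg}$, to $\fz_0^*$ generate $S(\fz_0)^{\fz_0}$ (a nontrivial restriction-of-invariants theorem for the Levi $\fz_0=\fz_\fg(C_0)$, not a formal consequence of anything stated), and (ii) a careful two-parameter leading-term analysis, since $\partial_{C(t)}^{\,m}\Phi$ mixes derivatives along $C_0,C_1,C_2,\dots$; one must show that after dividing by the correct power of $t$ the limit is precisely $\partial_{C_1}^{\,n}$ of such a $\fz_0$-invariant \emph{modulo elements already known to lie in $F_\infty$}, and then that iterating this bookkeeping through all levels $\fz_k$ (where the relevant curve is again a power series, not a single shift) exhausts the candidate algebra. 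Nothing in the sketch controls these lower-order corrections or the iteration.

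The second gap is in part (b): your identification $\widetilde F=F_\infty$ requires not just an inclusion but that $\widetilde F$ be free with the \emph{same graded} Poincar\'e series as a generic $F(C)$, i.e. the degrees of the new generators at each level must come out right, not merely their number $\tfrac12(\dim\fg+\rk\fg)$ after the telescoping of defects $\tfrac12(\dim\fz_k-\rk\fz_k)$. The claimed linear independence of differentials of the newly contributed generators at a generic point, and the statement that the intersection of consecutive levels is exactly the invariants $S(\fz_k)^{\fz_k}$ (the analogue of Lemma~\ref{size}, which in the paper rests on Knop's theorem via Lemma~\ref{tensor}), are asserted without proof; without them the dimension count does not close and the equality $\widetilde F=F_\infty$ — hence both assertions of the theorem — remains unestablished. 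In short: the outline is the right one, but the decisive steps are named rather than carried out.
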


Moreover, according to the results of A.~Tarasov \cite{taras} the subalgebras dicussed above (both $F(C)$ and the limit ones) are maximal Poisson-commutative subalgebras in $S(\fg)$ (i.e. coincide with their Poisson centralizers).

\subsection{Lifting to $U(\fg)$.} In \cite{ryb4} the subalgebras $F(C)$ were quantized, i.e. the existence of such commutative subalgebras $\hat F(C)\subset U(\fg)$ that $\gr \hat F(C) = F(C)$ was proved. In \cite{ryb} it is proved that $F(C)$ is the Poisson centralizer of the space $Q_C$ for generic $C$ hence the lifting of $F(C)$ to $U(\fg)$ is unique for generic $C$. In the case $\fg=\fgl_n$ we have a stronger statement due to A.~Tarasov \cite{taras2}: any subalgebra from this family (including the limit ones) can be uniquely lifted to the universal enveloping algebra and, moreover, there is a particular choice of the generators such that this lifting is just the symmetrization map on the generators. In particular, for $\fg=\fgl_n$, the varieties parameterizing limit subalgebras of the families $F(C)$ and $\hat F(C)$ are the same. In fact, we expect that the latter is true for arbitrary $\fg$.

In the case $\fg=\fgl_n$ the subalgebra $\hat F(C)\subset U(\fgl_n)$ is in fact the image of $B(C)\subset Y(\fgl_n)$ under the evaluation homomorphism $Y(\fgl_n)\to U(\fgl_n)$ (we can assume that $C$ is nondegenerate since $\hat F(C)$ does not change after adding a scalar matrix to $C$). This fact plays crucial role in what follows. Unfortunately this does not generalize to arbitrary $\fg$ since there is no evaluation homomorphism in general.

\subsection{$\ol{M_{0,n+1}}$ and shift of argument subalgebras.} 
In \cite{shuvalov} the explicit description of limit subalgebras is given together with a set-theoretical description of the parameter space. The results of Shuvalov in the $\fgl_n$ case can be reformulated in the following way.

The shift of argument subalgebras in $S(\fgl_n)$ depend on a parameter $C\in T^{reg}$ and do not change under the transformations $C\mapsto aC+bE$. So the parameter space for the subalgebras $F(C)\subset S(\fgl_n)$ can be regarded as $M_{0,n+1}$.

\begin{prop}\label{prop-fclosure} The closure of the parameter space for the shift of argument subalgebras in $S(\fgl_n)$ is $\ol{M_{0,n+1}}$. The same is true for the parameter space for the family $\hat F(C)$.
\end{prop}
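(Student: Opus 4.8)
The plan is to reduce the statement about $\hat F(C)$ to the corresponding statement about $F(C)$ and then prove the latter directly from Shuvalov's description (Theorem~\ref{shuvalov}) together with the blow-up description of $\ol{M_{0,n+1}}$ recalled in Section~3. Since by Tarasov's result (\cite{taras2}) the limit subalgebras of the families $F(C)$ and $\hat F(C)$ are in canonical bijection (the lifting to $U(\fgl_n)$ is unique and given by symmetrization), the two parameter spaces coincide, so it suffices to treat $F(C)\subset S(\fgl_n)$. By the results of Vinberg and Shuvalov quoted above, the limit of $F(C(t))$ is determined by its quadratic part, hence the parameter space is the closure of the image of the map $M_{0,n+1}\to Gr(n-1,\dim S^2(\fgl_n))$, $C\mapsto Q_C$, where $Q_C$ is spanned by the $H_i=\sum_{j\ne i}\frac{e_{ij}e_{ji}}{z_i-z_j}$ (note $\sum_i H_i$ depends only on $\fh$, so effectively we get $n-1$ independent directions, matching $\rk\fgl_n - $ the central line; one should be slightly careful here with the $\fgl_n$ versus $\fsl_n$ bookkeeping, using $F(C)\cap S^2(\fg)=S^2(\fh)\oplus Q_C$).

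First I would make the identification of $M_{0,n+1}$ with ${\rm Conf}_n/\BC^*$ explicit as in Section~3: fix the last marked point at $\infty$ and normalize $\sum z_i=0$, so that $C=\diag(z_1,\dots,z_n)$ with $(z_1,\dots,z_n)\in{\rm Conf}_n$ is the parameter, and $C\mapsto aC+bE$ becomes exactly the $\BC^*$-action by dilation together with the normalization of the trace. Then $\ol{M_{0,n+1}}$ is, by definition, (the GIT quotient by $\BC^*$ of) the iterated blow-up of the diagonals $\{z_{i_1}=\dots=z_{i_k}\}$ in $\BC^{n-1}$. The key point is that the map $C\mapsto Q_C$ extends to a \emph{regular} map from this iterated blow-up, and that the extended map is injective. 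For regularity I would argue stratum by stratum on the blow-up: near a point where a block of coordinates $z_i$, $i\in I$, collides (and possibly nested sub-blocks collide faster), introduce the blow-up coordinates $z_i = z_I + t\,w_i$ for $i\in I$ (with $\sum_{i\in I}w_i$ normalized), and observe that in the limit $t\to 0$ the vectors $H_i$ with $i\in I$, after rescaling by $t$, converge to the $H_i$ of the smaller configuration $(w_i)_{i\in I}$, while the remaining $H_j$ converge to the $H_j$ of the configuration with the block $I$ replaced by a single point $z_I$. This is precisely the operadic/substitution-map structure of $\ol{M_{0,n+1}}$, and it matches term-by-term the generators-and-derivatives description in Theorem~\ref{shuvalov}(1): the $\fz_k$ there are exactly the block-diagonal subalgebras $\bigoplus_I\fgl_I$ appearing as the collision pattern refines. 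Iterating over the tree of collisions (every substitution map is a composition of elementary ones, as recalled in Section~3) shows the map is defined on all of $\ol{M_{0,n+1}}$.

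Next I would prove injectivity of the extended map $\ol{M_{0,n+1}}\to Gr$. On the open part this is classical: the off-diagonal ratios $\frac{1}{z_i-z_j}$ appearing as structure constants of $Q_C$ recover the $z_i$ up to affine transformation, hence recover the point of $M_{0,n+1}$. On the boundary one recovers the combinatorial type of the stable curve from the ``block'' structure of the limit subspace (which coordinates remained finite relative to which), and then recovers each smaller configuration recursively from the corresponding $Q$ of the smaller $\fgl$; this uses Shuvalov's explicit description to match the recursive data on the subalgebra side with the nodal-curve data on the $\ol{M_{0,n+1}}$ side. Since $\ol{M_{0,n+1}}$ is a smooth complete variety, a bijective morphism to its image which is an immersion on each stratum is an isomorphism onto the closure, giving the claim; alternatively one invokes that the closure of the image is a complete variety dominated by $\ol{M_{0,n+1}}$ and separated by the stratified injectivity. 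Finally, transfer to $\hat F(C)$: the normalization-of-trace subtlety is the same, and Tarasov's uniqueness of the lift identifies the two parameter spaces verbatim, so the second sentence of the proposition follows formally from the first.

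The main obstacle I anticipate is making the ``regular extension plus stratumwise injectivity'' argument genuinely rigorous rather than just plausible: one must check that the limit of $Q_{C(t)}$ along \emph{every} curve $C(t)$ approaching a given boundary point of $\ol{M_{0,n+1}}$ is the \emph{same} subspace (independence of the path, which is where the smoothness of $\ol{M_{0,n+1}}$ and the good behaviour of blow-up coordinates is really used), and that this common limit coincides with the quadratic part predicted by Theorem~\ref{shuvalov}. Concretely, the bookkeeping of nested collisions --- aligning Shuvalov's sequence $\fg=\fz_{-1}\supset\fz_0\supset\fz_1\supset\cdots$ and ``derivatives along $C_{k+1}$'' with the tree of components of the stable curve and the blow-up chart coordinates --- is the technical heart, and a clean statement of this dictionary (perhaps as a lemma) is what the proof needs most. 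Everything else (the classical reconstruction of $z_i$ from $Q_C$, completeness of $\ol{M_{0,n+1}}$, the reduction $\hat F(C)\rightsquigarrow F(C)$ via Tarasov) is comparatively routine.
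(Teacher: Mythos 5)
Your reduction steps are exactly the paper's: by Vinberg--Shuvalov a limit subalgebra of the family $F(C)$ is determined by its quadratic component, and by Tarasov's uniqueness of the lifting the parameter space for $\hat F(C)$ coincides with that for $F(C)$, so everything comes down to identifying the closure of the family of subspaces $Q_C$ in the Grassmannian. Where you diverge is at precisely this last step: the paper does not prove it, but cites the theorem of Aguirre--Felder--Veselov \cite{AFV}, whose content is exactly that the closure of the family $Q_C$ (viewed universally in the Drinfeld--Kohno holonomy Lie algebra, hence also in $S^2(\fgl_n)$) is $\ol{M_{0,n+1}}$. You instead propose to reprove this from scratch via blow-up coordinates, degeneration of the $H_i$ along collision patterns, and stratumwise injectivity.

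That is where the genuine gap lies. You yourself flag that the ``technical heart'' --- path-independence of the limit of $Q_{C(t)}$ for every curve approaching a given boundary point, and the dictionary between Shuvalov's chain $\fz_{-1}\supset\fz_0\supset\cdots$ and the boundary strata --- is not carried out, and this is not a routine verification: it is essentially the content of the AFV theorem. Moreover your closing criterion, that a bijective morphism which is an immersion on each stratum is an isomorphism onto the closure of the image, is not valid as stated: a bijective morphism from a smooth complete variety onto its image need not be an isomorphism unless the image is normal (think of the normalization of a cuspidal curve), and normality of the closure of the family $Q_C$ in the Grassmannian is part of what has to be established. So as written the proposal proves the easy reductions but leaves the crucial geometric statement unproven; the shortest repair is to do what the paper does and invoke \cite{AFV} for the identification of the closure of the family $Q_C$ with $\ol{M_{0,n+1}}$, after which your argument for transferring the statement to $F(C)$ and $\hat F(C)$ goes through as in the paper.
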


\begin{proof} For $\fg=\fgl_n$ the regular Cartan element $C$ is a diagonal matrix with pairwise distinct eigenvalues $z_1,\ldots,z_n$. From the results of Shuvalov \cite{shuvalov} and Vinberg \cite{vinberg} it follows that any limit subalgebra is uniquely determined by its quadratic graded component. According to the results of Tarasov \cite{taras2}, the lifting of any limit shift of argument subalgebra is unique hence uniquely determined by the quadratic component as well. Note that in this case $Q_C$ is the linear span of the quadratic elements $H_i:=\sum\limits_{j\ne i}\frac{e_{ij}e_{ji}}{z_i-z_j}$ which are the coefficients of (an appropriate version of) the $KZ$ connection. In particular the space $Q_C$ does not change under simultaneous affine transformations of the $z$'s, hence the space of parameters is naturally the configuration space of $n$ pairwise distinct points on the affine line or, equivalently, the configuration space $M_{0,n+1}$ of $n+1$ pairwise distinct points on the projective line. In \cite{AFV} Aguirre, Felder and Veselov considered the same subspaces $Q_C$ universally (i.e. as subspaces in the Drinfeld-Kohno holonomy Lie algebra) and showed that the closure of the family $Q_C$ is the Deligne-Mumford compactification $\overline{M_{0,n+1}}$. This means that the parameter space for limit shift of argument subalgebras (both classical and quantum) for $\fg=\fgl_n$ is $\overline{M_{0,n+1}}$.
\end{proof}

One can define the subalgebra $F(X)\subset S(\fgl_n)$ corresponding to a degenerate curve $X\in \ol{M_{0,n+1}}$ recursively as follows. Let $X_\infty$ be the irreducible component of $X$ containing the marked point $\infty$. To any distinguished point $\lambda\in X_\infty$ we assign the number $k_\lambda$ of marked points on the (reducible) curve $X_\lambda$ attached to $X_\infty$ at $\lambda$. Let $C$ be the diagonal matrix with the eigenvalues $\lambda$ of multiplicity $k_\lambda$ for all distinguished points $\lambda\in X_\infty$. Then the corresponding shift of argument subalgebra $F(C)\subset S(\fgl_n)$ is centralized by the Lie subalgebra $\bigoplus\limits_{\lambda}\fgl_{k_\lambda}$ in $\fgl_n$ and contains the Poisson center $S(\bigoplus\limits_\lambda \fgl_{k_\lambda})^{\bigoplus\limits_\lambda \fgl_{k_\lambda}}$. The subalgebra corresponding to the curve $X$ is just the product of $F(C)\subset S(\fgl_n)$ and the subalgebras corresponding to $X_\lambda$ in $S(\fgl_{k_\lambda})\subset S(\fgl_n)$ for all distinguished points $\lambda\in X_\infty$ (for this we need to define the point $\infty$ on each $X_\lambda$ -- it is just the intersection with $X_\infty$).
Having a unique lifting of each $F(C)$ to the universal enveloping algebra $U(\fgl_n)$ we can do the same and attach a commutative subalgebra in $U(\fgl_n)$ to any stable rational curve $X\in\ol{M_{0,n+1}}$. Since every limit shift of argument subalgebra has a unique lifting to $U(\fgl_n)$, every limit subalgebra of the family $\hat F(C)\subset U(\fgl_n)$ is of this form. Now we can make a precise statement:

\begin{prop}
\label{tensorprod}
The subalgebra $F(X)$ corresponding to a degenerate curve $X$ is the tensor product $F(C)\otimes_{S(\bigoplus\limits_\lambda \fgl_{k_\lambda})^{\bigoplus\limits_\lambda \fgl_{k_\lambda}}}\bigotimes\limits_\lambda F(X_\lambda)$. The same is true for the lifting $\hat F(X)\subset U(\fgl_n)$.
\end{prop}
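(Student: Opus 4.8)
The plan is to reduce the statement to a count of generators plus a centralizer argument, exploiting the recursive description of $F(X)$ already given before the proposition. First I would set up notation: let $X_\infty$ be the component of $X$ through $\infty$, let $\lambda$ run over the distinguished points of $X_\infty$ other than $\infty$, and let $\fk:=\bigoplus_\lambda\fgl_{k_\lambda}\subset\fgl_n$ be the block-diagonal subalgebra (the blocks corresponding to the partition of $\{1,\dots,n\}$ given by the marked points sitting on each $X_\lambda$). By construction $F(C)$ is centralized by $\fk$ and contains $ZS(\fk):=S(\fk)^{\fk}$; and each $F(X_\lambda)$ is a subalgebra of $S(\fgl_{k_\lambda})$, the $\fgl_{k_\lambda}$-factor of $S(\fk)$, and likewise contains $S(\fgl_{k_\lambda})^{\fgl_{k_\lambda}}$ as its ``top-degree'' invariant part (the image of $ZS(\fgl_{k_\lambda})$). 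So the product subalgebra $F(X)=F(C)\cdot\prod_\lambda F(X_\lambda)$ inside $S(\fgl_n)$ is a quotient of the abstract tensor product $F(C)\otimes_{ZS(\fk)}\bigotimes_\lambda F(X_\lambda)$, the tensor product being over $ZS(\fk)=\bigotimes_\lambda S(\fgl_{k_\lambda})^{\fgl_{k_\lambda}}$, which is the common central subalgebra. What must be shown is that this surjection is an isomorphism, i.e. that no relations are introduced.

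The key step is to show the multiplication map is injective by a dimension/Poincaré-series count, using freeness. Since $C$ is a genuine diagonal matrix with eigenvalues $\lambda$ of multiplicity $k_\lambda$, the subalgebra $F(C)$ is still free — this is the ``$\fz_{-1}=\fgl_n$, $\fz_0=\fk$'' case of Theorem~\ref{shuvalov}, or directly: $F(C)$ is freely generated by $S(\fk)^{\fk}$ together with the derivatives $\partial_C^m\Phi_j$ of the $\fgl_n$-invariants that lie outside $S(\fk)$. By the Remark after Theorem~\ref{mf} one can write down the Poincaré series of $F(C)$, of each $F(X_\lambda)$ (by induction on $n$, using freeness of all limit shift-of-argument subalgebras from Shuvalov's theorem), and of $ZS(\fk)$ explicitly; the Poincaré series of the tensor product $F(C)\otimes_{ZS(\fk)}\bigotimes_\lambda F(X_\lambda)$ is then the product of the $F$-series divided by the product of the $ZS(\fgl_{k_\lambda})$-series. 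I would check this equals the Poincaré series of a free polynomial algebra with $n-k$ generators in degree $k$ — the series forced on \emph{any} limit of $F(C)$ by flatness of the family over $\ol{M_{0,n+1}}$ (Proposition~\ref{prop-fclosure}) together with freeness of $F(C_{\mathrm{reg}})$. Matching these two series forces the surjection $F(C)\otimes_{ZS(\fk)}\bigotimes_\lambda F(X_\lambda)\twoheadrightarrow F(X)$ to be an isomorphism, because a surjection of graded algebras with equal finite-dimensional graded pieces is an isomorphism — provided one also knows the source has no smaller Poincaré series, which follows once one checks the tensor product over $ZS(\fk)$ is itself a free module (each $F(X_\lambda)$ is free over its invariant part $S(\fgl_{k_\lambda})^{\fgl_{k_\lambda}}$, again by Shuvalov's freeness, so the relative tensor product is a polynomial algebra).

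For the lifted statement in $U(\fgl_n)$ I would run the same argument one filtration level up. By Tarasov \cite{taras2}, each of $\hat F(C)$, $\hat F(X_\lambda)$ is the unique lift of $F(C)$, $F(X_\lambda)$ and is obtained by symmetrization of the chosen generators; in particular $\gr\hat F(X)=F(X)$ and $\gr$ of the product $\hat F(C)\cdot\prod_\lambda\hat F(X_\lambda)$ contains (hence equals, by the classical case) $F(X)$. Since taking associated graded cannot decrease the Poincaré series and the classical Poincaré series is already attained, the multiplication map $\hat F(C)\otimes_{ZU(\fk)}\bigotimes_\lambda\hat F(X_\lambda)\to\hat F(X)$ — tensor product over the common central subalgebra $ZU(\fk)=\bigotimes_\lambda Z(U(\fgl_{k_\lambda}))$, which centralizes $\hat F(C)$ by the centralizer property of shift-of-argument subalgebras — is a filtered surjection whose associated graded is the classical isomorphism, hence an isomorphism. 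The main obstacle is the bookkeeping in the Poincaré-series computation: one must carefully track which $\fgl_n$-invariants ``survive'' in $F(C)$ versus which get absorbed into $S(\fk)^{\fk}$ and reappear via the $F(X_\lambda)$'s, and verify that the count of generators $n-k$ in degree $k$ is reproduced exactly by the recursion — equivalently, that the stratification of $\ol{M_{0,n+1}}$ is matched degree-by-degree by this tensor decomposition. This is where Shuvalov's explicit description (Theorem~\ref{shuvalov}) does the real work, and where the induction on $n$ closes up.
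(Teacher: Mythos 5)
Your overall strategy --- present $F(X)$ as a quotient of the relative tensor product and force injectivity by matching Poincar\'e series against the generic one --- is genuinely different from the paper's, but as written it has a gap that is essentially circular with respect to the paper's logic. Your series count needs, as input, the Poincar\'e series of $F(C)$ for the \emph{non-regular} diagonal matrix $C$ (eigenvalue $\lambda$ of multiplicity $k_\lambda$), together with the fact that free generators of $S(\bigoplus_\lambda \fgl_{k_\lambda})^{\bigoplus_\lambda \fgl_{k_\lambda}}$ extend to free generating sets of $F(C)$ and of each $F(X_\lambda)$, so that the relative tensor product is free over the base and has series $P_{F(C)}\cdot\prod_\lambda P_{F(X_\lambda)}\cdot\prod_\lambda Z_{k_\lambda}(x)^{-1}$. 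Neither of the sources you invoke supplies this: the Remark after Theorem \ref{mf} concerns regular $C$ only, and Theorem \ref{shuvalov} describes limits $\lim_{t\to0}F(C(t))$ along curves that are regular for generic $t$ --- the algebra generated by $Z S(\fgl_n)$ and its $\partial_C$-derivatives alone (your singular $F(C)$) is only the ``$k=-1$'' piece of that description, and Shuvalov's freeness of the limit as a \emph{polynomial algebra} does not by itself give freeness of $F(X_\lambda)$ as a \emph{module} over $S(\fgl_{k_\lambda})^{\fgl_{k_\lambda}}$ (one needs that central generators are part of a free generating set). Crucially, in the paper the Poincar\'e series of the singular $F(C)$ (Lemma \ref{fsize}) is \emph{deduced from} the tensor-product decomposition of the one-step limit (Lemma \ref{size}); using it as an input to prove Proposition \ref{tensorprod} is therefore circular unless you give an independent proof, and this is exactly the ``bookkeeping'' you defer to Shuvalov at the end.

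The ingredient that actually does the work in the paper, and is absent from your proposal, is Lemma \ref{tensor}, an application of Knop's theorem on the Harish-Chandra homomorphism for reductive group actions: for reductive $\fg_0\subset\fg$ both $S(\fg)^{\fg_0}$ and $S(\fg_0)$ are free modules over $S(\fg_0)^{\fg_0}$, and $S(\fg)^{\fg_0}\cdot S(\fg_0)\simeq S(\fg)^{\fg_0}\otimes_{S(\fg_0)^{\fg_0}}S(\fg_0)$. Combined with two facts from Shuvalov --- that the one-step limit equals the product $F(C_0)\cdot(F(C_1)\otimes\cdots\otimes F(C_l))$, and that $F(C_0)\cap(F(C_1)\otimes\cdots\otimes F(C_l))=Z(S(\fgl_{k_1}\oplus\cdots\oplus\fgl_{k_l}))$ --- this gives Lemma \ref{size} with no Hilbert-series computation at all, and the general statement follows by iterating one-step degenerations via the operad structure on $\ol{M_{0,n+1}}$; the Poincar\'e series of the singular $F(C)$ then comes out as a corollary. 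Your treatment of the quantum lifting (Tarasov's uniqueness plus a filtered/graded comparison) is consistent with the paper. To salvage your route you would have to either prove Lemma \ref{fsize} independently (freeness and degrees of generators of the Mishchenko--Fomenko algebra for a singular semisimple $C$, plus the containment $Z(S(\bigoplus_\lambda \fgl_{k_\lambda}))\subset F(C)$ with central generators extending to a free generating set), or import a freeness statement such as Knop's theorem --- at which point you have essentially reconstructed the paper's argument.
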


\begin{proof}
In order to prove this proposition we need the following Lemma:
\begin{lem}
\label{tensor} Suppose that $\fg$ is a reductive Lie algebra, $\fg_0$ -- reductive subalgebra of $\fg$. Then the subalgebras $U(\fg)^{\fg_0}$ and $U(\fg_0)$ in $U(\fg)$ are both free $U(\fg_0)^{\fg_0}$-modules. Moreover, the product of these subalgebras in $U(\fg)$ is:
$$U(\fg)^{\fg_0} \cdot U(\fg_0) \simeq U(\fg)^{\fg_0} \otimes_{U(\fg_0)^{\fg_0}} U(\fg_0).$$
The same holds for the associated graded algebras. Namely, $S(\fg)^{\fg_0}$ and $S(\fg_0)$ are free $S(\fg_0)^{\fg_0}$-modules, and
$$S(\fg)^{\fg_0} \cdot S(\fg_0) \simeq S(\fg)^{\fg_0} \otimes_{S(\fg_0)^{\fg_0}} S(\fg_0).$$
\end{lem}
\begin{proof} We can assume without loss of generality that $\fg_0$ does not contain nontrivial ideals of $\fg$. Then this is a particular case of Knop's theorem on Harish-Chandra map for reductive group actions (see \cite{Kn}, Theorem 10.1 and items (d) and (e) of the Main Theorem). Indeed, the Main Theorem of \cite{Kn} states that for any reductive $H$ and any smooth affine $H$-variety $X$ the algebra $D(X)^H$ of $H$-invariant differential operators and its commutant $U(X)$ in the algebra $D(X)$ are both free modules over the center of $D(X)^H$. Moreover, the product $D(X)^H\cdot U(X)\subset D(X)$ is the tensor product $D(X)^H\otimes_{ZD(X)^H} U(X)$ of $D(X)^H$ and $U(X)$ over the center of $D(X)^H$. To get the desired statement we just apply this to the $H=G\times G_0$-action on $X=G$, where $G$ acts from the right and $G_0$ acts on from the left. For this case we have $D(X)^{G\times G_0}$ is $U(\fg)^{\fg_0}$ and its commutant in $D(X)$ is $U(X)=U(\fg_0)\otimes_\BC U(\fg)$ (generated by momenta of the left $G_0$-action and the right $G$-action). By Theorem 10.1 of \cite{Kn}, the center of $U(\fg)^{\fg_0}$ is $U(\fg_0)^{\fg_0}\otimes_\BC U(\fg)^{\fg}$ and the algebra $U(X)$ contains $U(\fg_0)\otimes_\BC U(\fg)^\fg$ as the subalgebra of (right) $G$-invariants. Hence by the Main Theorem of \cite{Kn} we have the desired assertion.
Theorems 9.4 and 9.8 of \cite{Kn} imply the same for the associated graded algebras (in fact the Main Theorem of \cite{Kn} is a consequence of the same fact for the associated graded algebras).
\end{proof}

Now let us describe the simplest limit subalgebras corresponding to the case when all the curves $X_\lambda$ are irreducible.

\begin{lem}
\label{size}
Let $C_0=\diag(\underbrace{\lambda_1, \ldots, \lambda_1}_{k_1}, \ldots, \underbrace{\lambda_l, \ldots, \lambda_l}_{k_l})$ and $C_i=\diag(\underbrace{\mu_{i,1}, \ldots, \mu_{i,k_i}}_{k_i})$ for $i=1,\ldots,l$ such that $\lambda_r\ne\lambda_s$ and $\mu_{i,r}\ne\mu_{i,s}$ for $r\ne s$. Then the element
\begin{eqnarray*}C(t) := C_0 + t \cdot \diag(C_1, \ldots, C_l)\end{eqnarray*}
is regular (as an element of the \emph{Lie algebra} $\fgl_n$) for small $t$.
The subalgebra $\lim_{t \to 0} F(C(t))$ is the tensor product $$F(C_0) \otimes_{Z(S(\fgl_{k_1}\oplus \ldots \oplus \fgl_{k_l}))} (F(C_1)\otimes\ldots\otimes F(C_l)).$$ 
Here $F(C_i)$ is a subalgebra in $S(\fgl_{k_i})\subset S(\fgl_{k_1}\oplus \ldots \oplus \fgl_{k_l})$.
Moreover, $$\lim_{t \to 0} F(C(t)) \cap S(\fgl_{n})^{\fgl_{k_1}\oplus \ldots \oplus \fgl_{k_l}} = F(C_0).$$
Here $n=k_1+ \ldots +k_l$.
The same holds for quantum shift of argument subalgebras $\hat F(C)$.
\end{lem}

\begin{proof}
From \cite{shuvalov}
 it follows that $\lim_{t \to 0} = F(C_0) \cdot (F(C_1)\otimes\ldots\otimes F(C_l))$.
We know that $F(C_0) \subset S(\fgl_n)^{\fgl_{k_1} \oplus \ldots \oplus \fgl_{k_l}}$ (because $[\partial_C, {\rm ad} \, x] = \partial_{[C,x]}$) and $(F(C_1)\otimes\ldots\otimes F(C_l)) \subset S(\fgl_{k_1} \oplus \ldots \oplus \fgl_{k_l})$.
Moreover, from proof of \cite[Lemma 1]{shuvalov}  it follows that $F(C_0) \cap (F(C_1)\otimes\ldots\otimes F(C_l)) = Z(S(\fgl_{k_1}\oplus \ldots \oplus \fgl_{k_l}))$. Using Lemma \ref{tensor} we obtain the result.
\end{proof}

Since the closure of the parameter space for subalgebras $F(C)$ is $\overline{M_{0,n+1}}$, any limit subalgebra can be obtained by iterating the above degeneration. Iterating limits from Lemma above we obtain Proposition ~\ref{tensorprod}.
\end{proof}

Lemma~\ref{size} implies the following formula for the Poincare series of $F(C)$ for any diagonal $C$. Denote by $Z_k(x)$ the formal power series $\prod\limits_{i=1}^k(1-x^i)^{-1}$. We set $P_k(x):=\prod\limits_{i=1}^k Z_i(x)$. Note that $P_n(x)$ is the Poincare series of $F(C)$ for regular $C$.

\begin{lem}\label{fsize} Let $C=\diag(\underbrace{\lambda_1, \ldots, \lambda_1}_{k_1}, \ldots, \underbrace{\lambda_l, \ldots, \lambda_l}_{k_l})$ such that $\lambda_r\ne\lambda_s$. Then $F(C)$ is a free polynomial algebra with the Poincare series $$ P_n(x)\prod\limits_{i=1}^{l}\frac{Z_{k_i}(x)}{P_{k_i}(x)}.
$$
\end{lem}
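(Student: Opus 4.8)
The plan is to deduce Lemma~\ref{fsize} directly from Lemma~\ref{size} together with the tensor product decomposition it provides, by a bookkeeping computation of Poincar\'e series. First I would observe that, by Lemma~\ref{size}, for $C=\diag(\underbrace{\lambda_1, \ldots, \lambda_1}_{k_1}, \ldots, \underbrace{\lambda_l, \ldots, \lambda_l}_{k_l})$ the subalgebra $F(C)$ is itself a limit of the family of regular shift of argument subalgebras (take $C(t)=C_0+t\cdot\diag(C_1,\ldots,C_l)$ with $C_0=C$ and generic $C_i$, and note $F(C_0)=F(C)$ since $F$ only depends on the orbit data), and that $F(C) = F(C_0) \otimes_{Z(S(\fgl_{k_1}\oplus\ldots\oplus\fgl_{k_l}))}(F(C_1)\otimes\ldots\otimes F(C_l))$ is a free module over $Z(S(\fgl_{k_1}\oplus\ldots\oplus\fgl_{k_l}))$, being a tensor product of such. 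Here I should be slightly careful: what Lemma~\ref{size} literally computes is $\lim_{t\to0}F(C(t))$, which equals $F(C_0)\cdot(F(C_1)\otimes\ldots\otimes F(C_l))$; the point is that $F(C_0)=F(C)$ is a \emph{summand} of this, and in fact $F(C) = \lim_{t\to0}F(C(t)) \cap S(\fgl_n)^{\fgl_{k_1}\oplus\ldots\oplus\fgl_{k_l}}$, again by Lemma~\ref{size}. So $F(C)$ is freely generated and its Poincar\'e series can be extracted from the tensor product formula.

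Next I would assemble the Poincar\'e series. By Theorem~\ref{mf} and the Remark following it, for regular $C$ the algebra $F(C)\subset S(\fgl_n)$ is free with $n-k$ generators of degree $k$ for $k=1,\ldots,n$, so its Poincar\'e series is $\prod_{k=1}^n(1-x^k)^{-(n-k)} = \prod_{i=1}^{n-1}\prod_{k=1}^{i}(1-x^k)^{-1}=\prod_{i=1}^{n-1}Z_i(x)$; to match the paper's normalization $P_n(x)=\prod_{i=1}^nZ_i(x)$, note $Z_n(x)\cdots$ — so in fact $P_n(x)$ as defined equals $\prod_{i=1}^nZ_i(x)$ and the Poincar\'e series of $F(C)$ for regular $C$ is $\prod_{i=1}^{n-1}Z_i(x)$; I would reconcile this with the statement ``$P_n(x)$ is the Poincar\'e series of $F(C)$ for regular $C$'' by rechecking the index conventions and, if necessary, using the identity $\prod_{i=1}^nZ_i(x) = \prod_{i=1}^{n}\prod_{k=1}^{i}(1-x^k)^{-1}$ which has $n-k+1$ factors of $(1-x^k)^{-1}$; since $\qdet$ contributes the central generators, the consistent count is that $F(C)$ for regular $C$ has $n-k+1$ generators of degree $k$ only after adjoining the center, or one simply takes $P_n(x)$ as the paper declares. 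Granting the paper's convention $P_n(x)=$ Poincar\'e series of regular $F(C)$, the series of $F(C_0)=F(C)$ is obtained by: taking the series $P_n(x)$ of a regular algebra on $n=k_1+\ldots+k_l$ eigenvalues, and correcting for the fact that $F(C)$ only has the \emph{center} $Z(S(\fgl_{k_i}))$ (series $Z_{k_i}(x)$, since $Z(S(\fgl_m))$ is free on one generator in each degree $1,\ldots,m$) in place of a full regular block $F(C_i)\subset S(\fgl_{k_i})$ (series $P_{k_i}(x)$). This is exactly the multiplicative correction factor $\prod_{i=1}^l \frac{Z_{k_i}(x)}{P_{k_i}(x)}$, giving $P_n(x)\prod_{i=1}^l\frac{Z_{k_i}(x)}{P_{k_i}(x)}$.

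To make the last step rigorous I would argue as follows: write $A=\lim_{t\to0}F(C(t))=F(C)\otimes_R(F(C_1)\otimes\ldots\otimes F(C_l))$ where $R=Z(S(\fgl_{k_1}\oplus\ldots\oplus\fgl_{k_l}))=\bigotimes_{i}Z(S(\fgl_{k_i}))$. Since $A$ is a limit of regular shift of argument subalgebras on $n$ points, it is free with Poincar\'e series $P_n(x)$ (the Poincar\'e series is constant in flat families, or directly: Shuvalov's theorem gives freeness and the generator degrees are preserved under the degeneration). Since $A$ is a free module over $R$ (it is a tensor product over $R$ of $R$-free algebras $F(C)$ and $\bigotimes F(C_i)$), and since $\bigotimes_iF(C_i)$ is free over $R=\bigotimes_iZ(S(\fgl_{k_i}))$ with quotient series $\prod_iP_{k_i}(x)/\prod_iZ_{k_i}(x)$, we get from the tensor product over $R$ the identity of Poincar\'e series
\begin{equation*}
P_n(x)\;=\;\mathrm{Hilb}(F(C))\cdot\frac{\prod_{i=1}^lP_{k_i}(x)}{\prod_{i=1}^lZ_{k_i}(x)},
\end{equation*}
where the division by $\prod Z_{k_i}(x)=\mathrm{Hilb}(R)$ accounts for the fiber product. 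Solving for $\mathrm{Hilb}(F(C))$ yields the claimed formula, and freeness of $F(C)$ follows from Lemma~\ref{size} (it is presented there as a tensor factor of a free algebra, hence free). The main obstacle I anticipate is not conceptual but bookkeeping: pinning down precisely which normalization of the ``basic invariants'' (with or without the central $\qdet$-type generators, i.e.\ $\fgl_n$ versus $\fsl_n$, and whether degree runs $1,\ldots,n$ or $2,\ldots,n$) is in force, so that the factors $Z_k$, $P_k$, and $P_n$ line up with the definitions $Z_k(x)=\prod_{i=1}^k(1-x^i)^{-1}$, $P_k(x)=\prod_{i=1}^kZ_i(x)$, and ``$P_n(x)=$ Poincar\'e series of regular $F(C)$''. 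Once the index conventions are fixed consistently across $F(C_0)$, the $F(C_i)$'s, and the centers $Z(S(\fgl_{k_i}))$, the computation collapses to the one-line series identity above.
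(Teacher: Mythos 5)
Your argument is essentially the paper's own: the paper proves this lemma with the single line ``Straightforward from Lemma~\ref{size}'', and the intended bookkeeping is exactly your Hilbert-series computation --- the limit algebra is free with series $P_n(x)$, it is the tensor product over the center $\bigotimes_i Z(S(\fgl_{k_i}))$ (series $\prod_i Z_{k_i}(x)$) of $F(C)$ with the regular blocks (series $\prod_i P_{k_i}(x)$), and solving for $\mathrm{Hilb}(F(C))$ gives the stated formula. Your hedging about the generator count is only an off-by-one in the paper's remark (the correct count is $n-k+1$ generators of degree $k$, which is precisely $P_n(x)$), and the freeness of $F(C)$ is, in the paper as in your write-up, absorbed into Shuvalov's explicit description of the limit rather than justified by the loose ``tensor factor of a free algebra'' phrase, so no genuinely different route is involved.
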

\begin{proof} Straightforward from Lemma~\ref{size}.
\end{proof}

\begin{lem}
\label{l36}
Let $\mathfrak{A}$ be a family of subalgebras of the form $F(\diag(C,\underbrace{0, \ldots, 0}_{k})), C \in T^{reg} \subset GL_n$. Then \\
1) Every limit subalgebra of the family $\mathfrak{A}$ is a maximal commutative subalgebra of $S(\fgl_{n+k})^{\fgl_k}$. \\
2) Every limit subalgebra of this family is a free polynomial algebra with the following Poincare series:
$$P(x) = \prod \dfrac{1}{(1-x)^{n+1}} \cdot \dfrac{1}{(1-x^2)^{n+1}} \cdot \ldots \cdot \dfrac{1}{(1-x^k)^{n+1}} \cdot \dfrac{1}{(1-x^{k+1})^{n}} \cdot \dfrac{1}{(1-x^{k+2})^{n-1}} \cdot \ldots \cdot \dfrac{1}{(1-x^{n+k})}.$$
3) The closure of the parameter space for the family $\mathfrak{A}$ is $\ol{M_{0,n+2}}$.
\end{lem}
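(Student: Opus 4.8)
The plan is to deduce all three statements from the known theory of \emph{limit} shift of argument subalgebras for $\fgl_{n+k}$, using that each member of $\mathfrak{A}$ is the $\fgl_k$-invariant part of such a subalgebra. Write $\widetilde C:=\diag(C,\underbrace{0,\ldots,0}_{k})\in\fgl_{n+k}$. Since $F(\widetilde C)$ is unchanged under $\widetilde C\mapsto a\widetilde C+bE_{n+k}$, i.e.\ under simultaneous affine transformations of the eigenvalues $z_1,\ldots,z_n$ of $C$ together with the extra eigenvalue $0$, the family $\mathfrak{A}$ is parametrized by configurations of $z_1,\ldots,z_n,0$ on the affine line modulo affine transformations, that is by $M_{0,n+2}$. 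Fix once and for all a regular diagonal $D\in\fgl_k$, equivalently an irreducible curve $X_0\in M_{0,k+1}$, and for $C\in T^{reg}$ let $\widehat X(C)\in\ol{M_{0,n+k+1}}$ be the stable curve obtained by gluing $X_0$ to the $0$-marked point of the configuration $(\infty,z_1,\ldots,z_n,0)$. By Proposition~\ref{tensorprod} and Lemma~\ref{size},
$$F(\widehat X(C))=F(\widetilde C)\otimes_{Z(S(\fgl_k))}F(D),\qquad F(\widehat X(C))\cap S(\fgl_{n+k})^{\fgl_k}=F(\widetilde C).$$
So $\mathfrak{A}$ is the family obtained by restricting the (limit) shift of argument subalgebras for $\fgl_{n+k}$ to the image of the closed embedding $\ol{M_{0,n+2}}\hookrightarrow\ol{M_{0,n+k+1}}$, $X\mapsto$ (glue $X_0$ at the $0$-marked point), and then taking $\fgl_k$-invariant parts.

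For statement (3): by Proposition~\ref{prop-fclosure} the shift of argument subalgebras for $\fgl_{n+k}$ give, after closure, a closed embedding of $\ol{M_{0,n+k+1}}$ into the relevant product of Grassmannians. Composing with the closed embedding $\ol{M_{0,n+2}}\hookrightarrow\ol{M_{0,n+k+1}}$ above and then with the operation ``intersect with $S(\fgl_{n+k})^{\fgl_k}$'' gives a map $\bar\alpha\colon\ol{M_{0,n+2}}\to\prod_i{\rm Gr}(d(i),\dim S^i(\fgl_{n+k})^{\fgl_k})$ restricting to $\mathfrak{A}$ over the open stratum $M_{0,n+2}$; this $\bar\alpha$ is a morphism since the graded dimensions of $F(\widehat X)\cap S(\fgl_{n+k})^{\fgl_k}$ do not depend on $X$ (they are the coefficients of $P_{n+k}(x)/P_{k-1}(x)$, computed below), so the closure of the parameter space of $\mathfrak{A}$ is $\bar\alpha(\ol{M_{0,n+2}})$. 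Finally $\bar\alpha$ is injective: from $F(\widehat X)\cap S(\fgl_{n+k})^{\fgl_k}$ one recovers $F(\widehat X)=(F(\widehat X)\cap S(\fgl_{n+k})^{\fgl_k})\cdot F(D)$ because the factor $F(D)$ is fixed (see the last paragraph), and both $X\mapsto\widehat X$ and the $F$-map are injective. Hence $\bar\alpha$ is a closed embedding and the closure of the parameter space of $\mathfrak{A}$ is $\ol{M_{0,n+2}}$.

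For (1) and (2): by (3) every limit subalgebra $A$ of $\mathfrak{A}$ equals $F(\widehat X)\cap S(\fgl_{n+k})^{\fgl_k}$ with $\widehat X$ ranging over the image of $\ol{M_{0,n+2}}$, and $F(\widehat X)=A\otimes_{Z(S(\fgl_k))}F(D)$. Here $F(\widehat X)$ is a limit shift of argument subalgebra for $\fgl_{n+k}$, hence a free polynomial algebra with Poincar\'e series $P_{n+k}(x)$ (Theorem~\ref{shuvalov} together with the unique-lifting results for $\fgl_n$), while $F(D)$ is free with Poincar\'e series $P_k(x)$ and its central subalgebra $Z(S(\fgl_k))$ has series $Z_k(x)=\prod_{i=1}^k(1-x^i)^{-1}$; the tensor decomposition forces $A$ to be free with Poincar\'e series $P_{n+k}(x)\,Z_k(x)/P_k(x)=P_{n+k}(x)/P_{k-1}(x)$, and $P_m(x)=\prod_{j=1}^m(1-x^j)^{-(m-j+1)}$ rewrites this as the product in the statement, which gives (2) (on the open stratum (2) is already contained in Lemma~\ref{fsize}). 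For (1): by Tarasov~\cite{taras} $F(\widehat X)$ is maximal Poisson-commutative in $S(\fgl_{n+k})$; since $S(\fgl_{n+k})^{\fgl_k}$ is the Poisson centralizer of $S(\fgl_k)$, any Poisson-commutative $A'\supseteq A$ in $S(\fgl_{n+k})^{\fgl_k}$ gives a Poisson-commutative $A'\cdot F(D)\supseteq F(\widehat X)$ in $S(\fgl_{n+k})$; maximality of $F(\widehat X)$ and the identity $(A\cdot F(D))\cap S(\fgl_{n+k})^{\fgl_k}=A$ force $A'=A$, so $A$ is maximal Poisson-commutative in $S(\fgl_{n+k})^{\fgl_k}$.

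The step requiring most care is the tensor decomposition $F(\widehat X)=(F(\widehat X)\cap S(\fgl_{n+k})^{\fgl_k})\otimes_{Z(S(\fgl_k))}F(D)$ along \emph{all} of $\ol{M_{0,n+2}}$, not just the open stratum where Lemma~\ref{size} applies directly. For a general stable curve $X$ one must unwind the recursive description of $F(\widehat X)$ in Proposition~\ref{tensorprod} and check that $F(X_0)=F(D)$ is the unique tensor factor supported in $S(\fgl_k)$, all other factors $F(C^{(v)})$ being $\fgl_k$-invariant because the corresponding diagonal matrices $C^{(v)}$ commute with $\fgl_k$ (recall $[\partial_C,{\rm ad}\,x]=\partial_{[C,x]}$); I expect this to follow by induction on the number of irreducible components of $X$, using Lemma~\ref{tensor}, and it is the technical heart of the proof.
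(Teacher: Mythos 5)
Your reduction is essentially the one the paper uses: tensor the family $\mathfrak{A}$ with a fixed $F(D)\subset S(\fgl_k)$, $D$ regular, so as to land inside the family of limit shift of argument subalgebras of $\fgl_{n+k}$; then invoke Shuvalov (freeness), Tarasov (maximality) and Aguirre--Felder--Veselov via Proposition~\ref{prop-fclosure} (the closure $\ol{M_{0,n+k+1}}$, restricted to the stratum corresponding to $\ol{M_{0,n+2}}\times\{pt\}$); and recover the limits of $\mathfrak{A}$ as $\fgl_k$-invariant parts using the ``Moreover'' clause of Lemma~\ref{size}. Your maximality argument (an $x\in S(\fgl_{n+k})^{\fgl_k}$ commuting with the limit automatically commutes with $F(D)$, hence lies in the big maximal algebra, hence in its invariant part) and your Poincar\'e series computation coincide with the paper's; part (3) in the paper is likewise deduced from the fact that the closure of $M_{0,n+2}\times\{pt\}$ in $\ol{M_{0,n+k+1}}$ is $\ol{M_{0,n+2}}\times\{pt\}$.

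The one substantive difference is where the load-bearing decomposition is proved, and here your write-up has a gap that you yourself flag. You argue globally over $\ol{M_{0,n+2}}$, so everything (constancy of the graded dimensions making $\bar\alpha$ a morphism, the identification of all limit subalgebras, the series, maximality) routes through the identity $F(\widehat X)=\bigl(F(\widehat X)\cap S(\fgl_{n+k})^{\fgl_k}\bigr)\otimes_{Z(S(\fgl_k))}F(D)$ for \emph{arbitrary} stable $X$, which you only ``expect'' to follow by induction from Proposition~\ref{tensorprod} and Lemma~\ref{tensor}. The statement is true and your sketch (every factor $F(C^{(v)})$ other than $F(D)$ is $\fgl_k$-invariant since $C^{(v)}$ centralizes $\fgl_k$) is the right one, but it is not carried out. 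The paper sidesteps exactly this: it works with one-parameter families $b(t)=\diag(a(t),0,\ldots,0)$, perturbs them to regular families $c(t)=b(t)+t^{N}D$ with $N>\deg a(t)$ and $D=\diag(0,\ldots,0,\mu_1,\ldots,\mu_k)$, so that the needed decomposition $\lim_{t\to0}F(c(t))=\lim_{t\to0}F(b(t))\otimes_{Z(S(\fgl_k))}F(D)$ comes directly from Shuvalov's theorem and Lemma~\ref{size}, and this suffices because one-parameter (analytic) limits exhaust the Zariski closure (Remark in Section~\ref{def-lim}). So either carry out your induction in full, or switch to the one-parameter formulation, which makes the deferred step unnecessary. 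A minor further caveat: ``injective morphism from a proper variety'' gives a bijective parametrization of the closure, not literally a closed embedding of schemes; the paper is equally informal on this point, so it is not a real objection.
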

\begin{proof}
Let $a(t)$ belongs to $T^{reg}$ for small $t$ and consider
$b(t) = \diag(a(t),\underbrace{0, \ldots, 0}_{k})$.
Let $D = (\underbrace{0, \ldots, 0}_n, \mu_1 \ldots \mu_k)$ and degree $N$ such that $N > \deg a(t)$ and $c(t) = b(t) + t^N D$ is regular for small $t$.
Then
\begin{equation}
\label{f1}
\lim_{t \to 0} F(c(t)) =  \lim_{t \to 0} F(b(t)) \otimes_{Z(S(\fgl_k))} F(D)
\end{equation} is free and maximal commutative.

Suppose that $\lim_{t \to 0} F(b(t))$ is not maximal. This implies that there exists an element $x \in S(\fgl_{n+k})^{\fgl_k}$ such that $\{x,\lim_{t \to 0}F(b(t))\} = 0$.
This means that $x$ commutes with every element from the limit subalgebra (\ref{f1}). But the limit subalgebra is maximal therefore $x \in \lim_{t \to 0} F(c(t))\cap S(\fgl_{n+k})^{\fgl_k}$ which is $\lim_{t \to 0} F(b(t))$ by Lemma ~\ref{size}. Thus $\lim_{t \to 0} F(b(t))$ is maximal.

The second assertion follows from the fact that Poincare series of limit subalgebra \\ $\lim_{t \to 0} F(b(t))$ is the same as for algebra $F(b(t))$ for generic $t$ and from Lemma~\ref{fsize}.
To prove third assertion  let fix $C_1 \in \fgl_k$ and consider a family of subalgebras of the form $$F(C_0) \otimes_{Z(S(\fgl_k))} F(C_1),$$ where $C_0 = \diag(C,\underbrace{0, \ldots, 0}_{k}), C \in \fh^{reg}$. This family is parameterized by  \\ $\gamma_{n+1;\{1\},\{2\},\ldots,\{n\},\{n+1,\ldots,n+k\}}(M_{0,n+2} \times \{pt\}) \subset \gamma_{n+1;\{1\},\{2\},\ldots,\{n\},\{n+1,\ldots,n+k\}}(M_{0,n+2} \times  M_{0,k+1}) \subset M_{0, n+k+1}$. But  the closure of $M_{0, n+2} \times \{pt\}$ in $\ol{M_{0,n+k+1}}$ is $\ol{M_{0,n+2}} \times \{pt\}$ therefore the closure of the parameter space for the family $\mathfrak{A}$ is $\ol{M_{0,n+2}}$.
\end{proof}

\begin{cor} The same results (Lemmas \ref{size}, \ref{fsize} and \ref{l36}) are true for the liftings of shift of argument subalgebras to the universal enveloping algebra $U(\fgl_n)$.
\end{cor}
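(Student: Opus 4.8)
The plan is to deduce each of the three quantum statements from its classical counterpart using two facts already available: Tarasov's theorem \cite{taras2}, by which every member of the family (including all its limits) admits a unique commutative lifting $\hat F(X)\subset U(\fgl_m)$ with $\gr\hat F(X)=F(X)$ and the parameter spaces of the families $\{F(C)\}$ and $\{\hat F(C)\}$ coincide; and Lemma~\ref{tensor}, which gives the identity $U(\fg)^{\fg_0}\cdot U(\fg_0)\cong U(\fg)^{\fg_0}\otimes_{U(\fg_0)^{\fg_0}}U(\fg_0)$ verbatim in the enveloping algebra, compatibly with passage to the associated graded. I will repeatedly use two elementary observations: since the Poisson bracket on $\gr U(\fgl_m)=S(\fgl_m)$ lowers degree by $2$, the associated graded of a commutative subalgebra is Poisson-commutative and that of an $\fgl_k$-invariant one lies in $S(\fgl_m)^{\fgl_k}$; and a filtered algebra whose associated graded is a polynomial ring is itself free polynomial on any homogeneous lift of the generators, with the same Poincaré series. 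In particular $\gr\hat F(X)=F(X)$ already yields the quantum form of Lemma~\ref{fsize} and gives the Poincaré-series part of Lemma~\ref{l36}(2), while the quantum form of Lemma~\ref{l36}(3) is immediate since \cite{taras2} identifies the parameter space with the classical one.

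For Lemma~\ref{size} the strategy is to check that the candidate right-hand side is a commutative lifting of $\lim_{t\to0}F(C(t))$ and then invoke uniqueness. The factor $\hat F(C_0)$ lies in $U(\fgl_n)^{\bigoplus_i\fgl_{k_i}}$ because $C_0$ centralizes $\bigoplus_i\fgl_{k_i}$ and Tarasov's lift is $\fgl_n$-equivariant, hence commutes with $\bigotimes_i\hat F(C_i)\subset U(\bigoplus_i\fgl_{k_i})$, while the $\hat F(C_i)$ commute with one another as they sit in different blocks; Lemma~\ref{tensor} then identifies the product with $\hat F(C_0)\otimes_{Z(U(\bigoplus_i\fgl_{k_i}))}\bigl(\hat F(C_1)\otimes\cdots\otimes\hat F(C_l)\bigr)$ once one knows $\hat F(C_0)\cap\bigotimes_i\hat F(C_i)=Z(U(\bigoplus_i\fgl_{k_i}))$, which follows from the classical identity by taking $\gr$ and comparing Poincaré series. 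The associated-graded half of Lemma~\ref{tensor}, again with a Poincaré-series comparison, shows that the associated graded of this subalgebra is $\lim_{t\to0}F(C(t))$ (Lemma~\ref{size}); uniqueness of the lifting then forces equality with $\lim_{t\to0}\hat F(C(t))$, and the ``moreover'' clause follows by intersecting with $U(\fgl_n)^{\bigoplus_i\fgl_{k_i}}$ and passing to $\gr$.

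The one substantive point, which I expect to be the main obstacle, is maximality in Lemma~\ref{l36}(1). I would first show that $\lim_{t\to0}\hat F(c(t))$ is maximal commutative in $U(\fgl_{n+k})$ whenever $c(t)$ is regular near $0$: if a commutative $\hat B$ contains it then $\gr\hat B$ is Poisson-commutative and contains $\gr\lim_{t\to0}\hat F(c(t))=\lim_{t\to0}F(c(t))$, which is maximal Poisson-commutative by Tarasov \cite{taras}, so the two associated gradeds agree and, by the Poincaré-series comparison, $\hat B=\lim_{t\to0}\hat F(c(t))$. Then I rerun the proof of Lemma~\ref{l36} in $U$: the quantum analogue of (\ref{f1}), $\lim_{t\to0}\hat F(c(t))=\lim_{t\to0}\hat F(b(t))\otimes_{Z(U(\fgl_k))}\hat F(D)$, follows from the quantum Lemma~\ref{size} and uniqueness of the lifting applied to the curve $b(t)+t^ND$ (using that each $\hat F(b(s))$, and hence $\lim_{t\to0}\hat F(b(t))$, lies in $U(\fgl_{n+k})^{\fgl_k}$); an $x\in U(\fgl_{n+k})^{\fgl_k}$ commuting with $\lim_{t\to0}\hat F(b(t))$ also commutes with $\hat F(D)\subset U(\fgl_k)$ by $\fgl_k$-invariance, hence with all of $\lim_{t\to0}\hat F(c(t))$, so by the maximality just established and the ``moreover'' clause of the quantum Lemma~\ref{size} we get $x\in\lim_{t\to0}\hat F(c(t))\cap U(\fgl_{n+k})^{\fgl_k}=\lim_{t\to0}\hat F(b(t))$. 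The delicate ingredient throughout is the identity $\gr\lim_{t\to0}\hat F(X(t))=\lim_{t\to0}F(X(t))$ used above: this is not a naive interchange of $\gr$ with a limit (the leading-symbol map is not continuous in general) but exactly the assertion of \cite{taras2} that the limiting lifting is a lifting of the limiting subalgebra, so it must be quoted rather than reproved, after which everything reduces to Lemma~\ref{tensor} and Poincaré-series comparisons.
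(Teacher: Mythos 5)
Your proposal is correct and follows essentially the same route as the paper, whose entire proof is the observation that shift of argument subalgebras (including the limit ones) admit a unique lifting to $U(\fgl_n)$ by \cite{taras2}, so all the classical statements transfer. Your write-up simply makes explicit the standard filtered-algebra bookkeeping (Poincar\'e-series comparisons, Lemma~\ref{tensor}, and the maximality transfer via \cite{taras}) that the paper leaves implicit.
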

\begin{proof}
Follows from the fact that shift of argument subalgebras admits unique lifting to $U(\fgl_n)$ \\ ~\cite{taras2}.
\end{proof}

\section{The results.}
The subalgebra $B(C)$ does not change under dilations of $C$, so the space of parameters for the family of Bethe subalgebras in the Yangian $Y(\fgl_n)$ is $T^{reg}/\mathbb{C}^*$. Let $Z$ be the pro-algebraic scheme defined in Section~\ref{def-lim} (note that $Z$ is naturally a closure of $T^{reg}/\mathbb{C}^*$). By analogy with the shift of argument subalgebras, we can regard $T^{reg}/\mathbb{C}^*$ as the moduli space $M_{0,n+2}$, i.e. the space of rational curves with $n+2$ marked points. More precisely, to any matrix $C$ with the eigenvalues $z_1,\ldots,z_n$, we can assign a rational curve $\mathbb{P}^1$ with the marked points $0,z_1,\ldots,z_n,\infty$. Our main result is the following

\begin{thm}
\label{result}
\begin{enumerate}
\item $Z$ is a smooth algebraic variety isomorphic to $\overline{M_{0,n+2}}$.
\item For any point $X\in\overline{M_{0,n+2}}$, the corresponding commutative subalgebra $B(X)$ in $Y(\fgl_n)$ is free and maximal commutative.
\end{enumerate}
\end{thm}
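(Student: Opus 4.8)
The plan is to push the whole family of Bethe subalgebras, via the Olshanski centralizer maps $\eta_k$, into the family of shift of argument subalgebras in $U(\fgl_{n+k})$ for $k\gg0$, and then to quote Lemma~\ref{l36}. For $C=\diag(z_1,\dots,z_n)\in T^{reg}$ put $C^{(k)}:=\diag(z_1,\dots,z_n,\underbrace{0,\dots,0}_{k})\in\fgl_{n+k}$. The first and main step (this is Proposition~\ref{incl}) is to prove that $\eta_k\big(B(C)\otimes A_0\big)=\hat F\big(C^{(k)}\big)$ inside $U(\fgl_{n+k})^{\fgl_k}$, and that this identification is an asymptotic isomorphism in the sense of Theorem~\ref{qasym}. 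The inclusion $\eta_k(B(C)\otimes A_0)\subseteq\hat F(C^{(k)})$ follows by unwinding $\eta_k=\pi_{n+k}\circ\omega_{n+k}\circ i_k$ on the generating series $\tau_j(u,C)$ of $B(C)$ (Lemma~\ref{vid}), using $\omega_{n+k}\colon T(u)\mapsto T(-u-n-k)^{-1}$, and checking that the resulting coefficients, together with $\eta_k(1\otimes\mathcal E_i)=\mathcal E_i^{(n+k)}$, are shifts $\partial_{C^{(k)}}^{m}\Phi$ of $\fgl_{n+k}$-invariants; these lie in $\hat F(C^{(k)})$, which by Lemma~\ref{size} is centralised by $\fgl_k$. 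The reverse inclusion, and the asymptotic-isomorphism claim, then follow from a Poincaré-series count: by Theorem~\ref{qasym} the map $\eta_k$ is injective on each filtered component for $k\gg0$, and the series of $B(C)\otimes A_0$ — free on the coefficients of $\tau_1,\dots,\tau_n$ and on $\mathcal E_1,\mathcal E_2,\dots$, hence equal to $\prod_{r\ge1}(1-x^r)^{-(n+1)}$ — agrees up to degree $N$, for $k>N$, with the series of $\hat F(C^{(k)})$ given by Lemma~\ref{l36}(2). I expect this step to be the main obstacle, since it is the only point where the concrete structure of the Yangian, the centralizer construction and the evaluation homomorphism all have to be reconciled by an explicit computation.

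Granting this, part (1) goes as follows. Fix $r$. For $k\gg0$ the map $\eta_k$ is injective on $Y_r(\fgl_n)\otimes A_0$ and, by the first step, carries the flag $(B_1(C),\dots,B_r(C))$ — equivalently $(B(C)\otimes A_0)_{\le r}$, the tensor factor $A_0$ being independent of $C$ — onto $(\hat F(C^{(k)})_1,\dots,\hat F(C^{(k)})_r)$, compatibly with the $T^{reg}$-action on both sides. Hence $Z_r$ is isomorphic to the closure of the degree-$\le r$ parameter space of the family $\{\hat F(C^{(k)})\}_{C\in T^{reg}}$, which is precisely the family $\mathfrak A$ of Lemma~\ref{l36}. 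Since a limit shift of argument subalgebra is determined by its quadratic component (Vinberg \cite{vinberg}, Shuvalov \cite{shuvalov}), this truncated closure does not depend on $r$ for $r\ge2$ and, by Lemma~\ref{l36}(3), equals $\ol{M_{0,n+2}}$. Therefore $Z_r\cong\ol{M_{0,n+2}}$ for all $r\ge2$, the transition maps $\rho_r$ are isomorphisms for $r\ge3$, and $Z=\varprojlim Z_r\cong\ol{M_{0,n+2}}$; in particular $Z$ is a smooth algebraic variety.

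For part (2): by Lemma~\ref{l36}(1)--(2) and the Corollary following it, every limit of the family $\{\hat F(C^{(k)})\}$ is a free polynomial algebra and a maximal commutative subalgebra of $U(\fgl_{n+k})^{\fgl_k}$. Given $X\in\ol{M_{0,n+2}}$, the previous step identifies $B(X)\otimes A_0$, through $\{\eta_k\}$, with the limit subalgebra $\hat F(X^{(k)})$ attached to the corresponding point $X^{(k)}\in\ol{M_{0,n+k+1}}$. Being a free polynomial algebra with a prescribed Poincaré series is a condition checked in each fixed degree, and so survives the asymptotic isomorphism, whence $B(X)\otimes A_0$ is free; and since $A_0$ is a polynomial factor with one generator in each positive degree lying outside $Y(\fgl_n)$, the $\mathcal E_i$ extend any system of free generators of $B(X)$ to one of $B(X)\otimes A_0$, so $B(X)$ is itself a free polynomial algebra. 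For maximality, if $y\in Y(\fgl_n)\otimes A_0$ commutes with $B(X)\otimes A_0$ then, $\eta_k$ being an algebra homomorphism, $\eta_k(y)$ commutes with $\eta_k(B(X)\otimes A_0)=\hat F(X^{(k)})$, which is maximal commutative in $U(\fgl_{n+k})^{\fgl_k}$; hence $\eta_k(y)\in\hat F(X^{(k)})=\eta_k(B(X)\otimes A_0)$, and taking $k$ with $\eta_k$ injective on $(Y(\fgl_n)\otimes A_0)_{\deg y}$ forces $y\in B(X)\otimes A_0$. Thus $B(X)\otimes A_0$ is maximal commutative in $Y(\fgl_n)\otimes A_0$; since its centraliser is $Z_{Y(\fgl_n)}(B(X))\otimes A_0$, the subalgebra $B(X)$ is maximal commutative in $Y(\fgl_n)$.
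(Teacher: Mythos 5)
Your overall architecture is the same as the paper's (transport the family through the centralizer maps $\eta_k$ and quote Lemma~\ref{l36}), but the step you yourself single out as the main obstacle is where the genuine gap sits, and your proposed way around it would not work as described. You claim the inclusion $\eta_k(B(C)\otimes A_0)\subset\hat F(C^{(k)})$ by ``unwinding'' $\pi_{n+k}\circ\omega_{n+k}\circ i_k$ on $\tau_j(u,C)$ and ``checking that the resulting coefficients are shifts $\partial^m_{C^{(k)}}\Phi$''. After applying $\omega_{n+k}$ and $\pi_{n+k}$ the generating series become coefficients of $\tr A_l\,C^{(k)}_1\cdots C^{(k)}_l S_1(u)\cdots S_l(u-l+1)$ with $S(u)=\bigl(1+E(-u-n-k)^{-1}\bigr)^{-1}$: these are noncommutative elements of $U(\fgl_{n+k})$, so ``being $\partial^m_{C^{(k)}}\Phi$'' is not literally meaningful; what must be proved is membership in the Tarasov lift $\hat F(C^{(k)})$, and even for a regular invertible parameter this is a quoted theorem (\cite{taras2}: $\pi_{n+k}(B(a))=\hat F(a^{-1})$), not a routine computation. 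Worse, $C^{(k)}$ is degenerate and non-regular, so Lemma~\ref{togd}/Lemma~\ref{betheImage} cannot be applied to it directly. The paper's proof of Proposition~\ref{incl} is designed precisely to avoid this: it perturbs to $a(t)=\diag(C,0,\dots,0)+t\,\diag(0,\dots,0,C')\in T^{reg}$, uses part 1 of Theorem~\ref{result2} (itself resting on $\omega_n(B(C))=B(C^{-1})$), Tarasov's evaluation result for the regular $a(t)$, Lemma~\ref{size} to identify $\lim_{t\to0}\hat F(a(t))$, and then the ``Moreover'' clause of Lemma~\ref{size} to cut down by $\fgl_k$-invariance to $\hat F(C^{(k)})$. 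Your sketch replaces this chain by an assertion, so the main step is not established.

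A second, related problem: you assert the exact equality $\eta_k(B(C)\otimes A_0)=\hat F(C^{(k)})$ for a fixed $k$, but your Poincar\'e-series argument only gives agreement of filtered components up to degree $N$ once $k>K(N)$; since $\hat F(C^{(k)})$ has generators up to degree $n+k$, equality of the full images does not follow (and the paper never claims it -- Proposition~\ref{incl} states inclusion plus asymptotic isomorphism). This gap propagates into your maximality argument in part (2): from $[y,B(X)\otimes A_0]=0$ you only know that $\eta_k(y)$ commutes with the image $\eta_k(B(X)\otimes A_0)$, which is guaranteed to exhaust $\hat F(X^{(k)})$ only in low degrees, whereas invoking maximality of $\hat F(X^{(k)})$ in $U(\fgl_{n+k})^{\fgl_k}$ requires commutation with the whole algebra, including its top-degree generators. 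So that step needs an extra argument (e.g.\ running the comparison degreewise through the asymptotic isomorphism, as the paper does). Your treatment of part (1), identifying $Z$ with $\ol{M_{0,n+2}}$ via Lemma~\ref{l36}(3) and the fact that limit shift of argument subalgebras are determined by their quadratic components, is at essentially the paper's level of detail and is fine once the Proposition~\ref{incl} step is genuinely in place.
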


Next, the operad structure on the spaces $\ol{M_{0,n+1}}$ leads to a recursive description of the limit subalgebras analogous to that of the limit shift of argument subalgebras. Let $X_\infty$ be the irreducible component of $X\in\ol{M_{0,n+2}}$ containing the marked point $\infty$. We identify $X_\infty$ with the standard $\mathbb{CP}^1$ in such a way that the marked point $\infty$ is $\infty$ and the point where the curve containing the marked point $0$ touches $X_\infty$ is $0$. To any distinguished point $\lambda\in X_\infty$ we assign the number $k_\lambda$ of marked points on the maximal (possibly reducible) curve $X_\lambda$ attached to $X_\infty$ at $\lambda$ (we set $k_\lambda=1$ if $X_\lambda$ is a (automatically, marked) point). Let $C$ be the diagonal $(n-k_0)\times(n-k_0)$-matrix with the eigenvalues $\lambda$ of multiplicity $k_\lambda$ for all distinguished points $0\ne\lambda\in X_\infty$. Then the subalgebra $i_{k_0}(B(C))$ centralized by the Lie subalgebra $\bigoplus\limits_{\lambda\ne0}\fgl_{k_\lambda}$ in $\fgl_{n-k_0}\subset i_{k_0}(Y(\fgl_{n-k_0}))\subset Y(\fgl_n)$ and by the complement sub-Yangian $\psi_{n-k_0}(Y(\fgl_{k_0}))$.
The subalgebras corresponding to boundary points of $\ol{M_{0,n+2}}$ can be inductively described as follows:

\begin{thm}\label{result1} \begin{enumerate} \item The limit Bethe subalgebra corresponding to the curve $X\in\ol{M_{0,n+2}}$ is the product of the following $3$ commuting subalgebras: first, $i_{k_0}(B(C))\subset i_{k_0}(Y(\fgl_{n-k_0}))\subset Y(\fgl_n)$, second, the subalgebra corresponding to $X_0$ in the complement sub-Yangian $\psi_{n-k_0}(Y(\fgl_{k_0}))\subset Y(\fgl_n)$ and third, the limit shift of argument subalgebras $\hat F(X_\lambda)$ in $U(\fgl_{k_\lambda})\subset i_{k_0}(Y(\fgl_{n-k_0}))\subset Y(\fgl_n)$ for all distinguished points $\lambda\ne0$ (again we define the point $\infty$ on each $X_\lambda$ just as the intersection with $X_\infty$).
\item $i_{k_0}(B(C))$ contains the center of every $U(\fgl_{k_\lambda})\subset i_{k_0}(Y(\fgl_{n-k_0}))$. The above product is in fact the tensor product $$\psi_{n-k_0}(B(X_0))\otimes_{\BC}i_{k_0}(B(C))\otimes_{ZU(\bigoplus_{\lambda\ne0} \fgl_{k_\lambda})}\bigotimes_{\lambda\ne0}\hat F(X_\lambda).$$
\end{enumerate}
\end{thm}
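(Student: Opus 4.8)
The plan is to transport the recursive tensor--product description of limit shift of argument subalgebras (Proposition~\ref{tensorprod}) back to the Yangian through the asymptotic isomorphism $\eta_k$. Recall from the proof of Theorem~\ref{result} that to $X\in\ol{M_{0,n+2}}$ one associates the curve $X^{(k)}\in\ol{M_{0,n+k+1}}$ obtained by gluing a copy of $\BP^1$ with $k$ new marked points at the marked point $0$ of $X$ (equivalently, $X^{(k)}$ is the limit of the configurations $\diag(C(t),0,\ldots,0)$ as $C(t)\to X$), and that for $k\gg0$ the map $\eta_k$ restricts to an isomorphism of filtered vector spaces $B(X)\otimes A_0\xrightarrow{\ \sim\ }\hat F(X^{(k)})$ (this uses Proposition~\ref{incl} and Lemma~\ref{l36}). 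The key structural observation is that the operation $X\mapsto X^{(k)}$ is self--similar: in $X^{(k)}$ the subcurve hanging from $X_\infty$ at the point $0$ is exactly $(X_0)^{(k)}$, the same operation applied to $X_0$, where $X_0\in\ol{M_{0,k_0+2}}$ is regarded as a point of the Bethe moduli space for $\fgl_{k_0}$ (its node on $X_\infty$ playing the role of $\infty$, and its marked point $0$ being where the $k$ new points are attached), while the subcurves $X_\lambda$ with $\lambda\ne0$ are unchanged and are points of the shift of argument moduli space $\ol{M_{0,k_\lambda+1}}$.

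Now I would apply Proposition~\ref{tensorprod} to $X^{(k)}$. Discarding the trivial factors attached at the marked points lying directly on $X_\infty$, it gives
$$\hat F(X^{(k)})\;=\;\hat F\big(\diag(C,\underbrace{0,\ldots,0}_{k_0+k})\big)\;\otimes_{ZU(\fgl_{k_0+k}\oplus\bigoplus_{\lambda\ne0}\fgl_{k_\lambda})}\;\Big(\hat F\big((X_0)^{(k)}\big)\otimes\bigotimes_{\lambda\ne0}\hat F(X_\lambda)\Big).$$
Each factor on the right is the $\eta_k$--image of a distinguished subalgebra of $Y(\fgl_n)\otimes A_0$. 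By the lemma describing the restriction of $\eta_k$ to the commuting sub--Yangians $i_{k_0}(Y(\fgl_{n-k_0}))$ and $\psi_{n-k_0}(Y(\fgl_{k_0}))$, one has $\eta_k\circ i_{k_0}=\eta_{k_0+k}$ (with the $A_0$ understood, since the two central maps $z_k$, $z_{k_0+k}$ coincide on $A_0$), so Proposition~\ref{incl} applied to $Y(\fgl_{n-k_0})$ with $k_0+k$ in place of $k$ --- which remains valid for the non--regular block--diagonal $C$ after comparing the further decompositions furnished by Lemma~\ref{size} --- identifies the first factor with $\eta_k\big(i_{k_0}(B(C))\otimes A_0\big)$; likewise $\eta_k\circ\psi_{n-k_0}$ is the $\fgl_{k_0}$--centralizer map, so the self--similarity above together with the $\fgl_{k_0}$--case of Theorem~\ref{result} identifies $\hat F((X_0)^{(k)})$ with the $\eta_k$--image of $\psi_{n-k_0}(B(X_0))$; and each $\hat F(X_\lambda)$ is a limit shift of argument subalgebra, already described by Proposition~\ref{tensorprod}, sitting inside $i_{k_0}(Y(\fgl_{n-k_0}))$ as in the statement, on which $\eta_k$ is injective. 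Since $\eta_k$ is an asymptotic isomorphism, these identifications pull the displayed factorization back to $Y(\fgl_n)\otimes A_0$; cancelling the $A_0$ factor yields part~(1).

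For part~(2), the inclusion $ZU(\fgl_{k_\lambda})\subset i_{k_0}(B(C))$ follows from Lemma~\ref{vid}: the quantum minor of $T(u)$ over the index block of $\lambda$ is, up to the scalar $\lambda^{k_\lambda}$, the $\lambda$--isotypic component of $\tau_{k_\lambda}(u,C)$, hence lies in $B(C)$; this is the transport through the evaluation map of the inclusion $Z(S(\bigoplus\fgl_{k_\lambda}))\subset F(C)$ extracted from Lemma~\ref{size}. That the product of $i_{k_0}(B(C))$ and the $\hat F(X_\lambda)$ is the asserted tensor product over these centers is Lemma~\ref{tensor} applied to $\bigoplus_{\lambda\ne0}\fgl_{k_\lambda}\subset\fgl_{n-k_0}$, and the outer $\otimes_\BC$ with $\psi_{n-k_0}(B(X_0))$ is Lemma~\ref{commuteyang}, since $i_{k_0}(Y(\fgl_{n-k_0}))$ and $\psi_{n-k_0}(Y(\fgl_{k_0}))$ generate their tensor product inside $Y(\fgl_n)$ and meet only in scalars.

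The main obstacle will be the bookkeeping behind the second paragraph: verifying precisely that the subcurve of $X^{(k)}$ at $0$ is $(X_0)^{(k)}$, tracking how the various copies of $A_0$ and of the centers $ZU(\fgl_{k_\lambda})$, $ZU(\fgl_{n+k})$ match up across $\eta_k$, extending Proposition~\ref{incl} to the non--regular matrices that occur, and locating the factors $\hat F(X_\lambda)$ inside $Y(\fgl_n)$ together with the commutation of all three groups of generators; once these identifications are in place the asymptotic--isomorphism property of $\eta_k$ delivers the statement essentially formally. Alternatively one may first establish only the two codimension--one degenerations --- collision of some $z_i$ away from $0$, where Lemma~\ref{size} is applied once, and collision at $0$, where it is applied twice and both flavours $\eta_k\circ i_{k_0}$ and $\eta_k\circ\psi_{n-k_0}$ enter --- and then iterate using the operad structure on $\ol{M_{0,n+2}}$, the consistency of the iteration being guaranteed by the freeness and maximality statements of Theorem~\ref{result}.
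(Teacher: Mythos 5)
Your part (1) is essentially fine: the paper itself obtains it exactly by your ``alternative'' route, namely proving the two elementary degenerations (Theorem~\ref{result2}) and iterating them via the operad structure on $\ol{M_{0,n+2}}$; your primary route (transporting Proposition~\ref{tensorprod} through $\eta_k$ and ``cancelling the $A_0$ factor'') is plausible in outline but hides the real difficulty in the ``bookkeeping'': Proposition~\ref{incl} is only proved for non-degenerate diagonal $C$ and limits of regular families, and disentangling the $A_0$-factor from the Bethe factor in the preimage under $\eta_k$ is not formal --- it is precisely the point where the paper has to invoke a nontrivial input (see below).

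The genuine gap is in your part (2). You claim that $ZU(\fgl_{k_\lambda})\subset i_{k_0}(B(C))$ follows from Lemma~\ref{vid} because the quantum minor over the $\lambda$-block is ``the $\lambda$-isotypic component'' of $\tau_{k_\lambda}(u,C)$. There is no such isotypic decomposition available inside $B(C)$: all the diagonal quantum minors $t^{a_1\ldots a_{k_\lambda}}_{a_1\ldots a_{k_\lambda}}(u)$ occurring in $\tau_{k_\lambda}(u,C)$ have weight zero for the torus action, so no grading or symmetry separates the single block summand from the others, and an individual summand of a generator need not lie in the algebra generated by the full sums. Moreover, even if the block quantum minor did lie in $B(C)$, its coefficients generate the center of the sub-Yangian $Y(\fgl_{k_\lambda})$ (they involve all $t^{(r)}_{ij}$), not the center of $U(\fgl_{k_\lambda})\subset i_{k_0}(Y(\fgl_{n-k_0}))$ generated by the $t^{(1)}_{ij}$, which is what the statement requires; there is no ``transport through the evaluation map'' here since everything must happen inside the Yangian. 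The paper's proof of part (2) is genuinely indirect: one knows $\eta_k(B(C)\otimes A_0)$ contains the centers $ZU(\fgl_{k_\lambda})$ (via Lemma~\ref{size}) and that $A_0$ maps onto $ZU(\fgl_{n+k})$; then Knop's result (\cite[Lemma 10.1]{Kn}), asserting that the central generators of $U(\fgl_{n+k})$ and of all the $U(\fgl_{k_\lambda})$ are algebraically independent, combined with the asymptotic isomorphism, forces these centers to come from the image of $B(C)$ itself rather than from the $A_0$-factor. Some argument of this kind (or another substitute) is needed both for your part (2) and to justify the ``cancellation of $A_0$'' in your main route for part (1); once the centers are located in $i_{k_0}(B(C))$, your use of Lemma~\ref{tensor} and Lemma~\ref{commuteyang} to convert the product into the asserted tensor product is the same as in the paper.
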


\begin{thm}
\label{result2}
1) Let $C_0=\diag(\lambda_1, \ldots, \lambda_{n-k})$ and $C_1=\diag(\mu_1, \ldots, \mu_k)$.  Suppose that
$C(t) = \diag(C_0, \underbrace {0, \ldots, 0}_k)
+ t\cdot\diag(\underbrace{0, \ldots, 0}_{n-k}, C_1) \in T^{reg}$ (i.e. both $C_0$ and $C_1$ are regular and non-degenerate). Then
$$\lim_{t \to 0} B(C(t)) = i_{k}(B(C_0)) \otimes \psi_{n-k}(B(C_1)).$$ \\
2) Let $C_0=\diag(\underbrace{\lambda_1, \ldots, \lambda_1}_{k_1}, \ldots, \underbrace{\lambda_l, \ldots, \lambda_l}_{k_l})$ be a non-degenerate matrix and $C_i=\diag(\underbrace{\mu_{i,1}, \ldots, \mu_{i,k_i}}_{k_i})$ for $i=1,\ldots,l$ such that $\lambda_r\ne\lambda_s$ and $\mu_{i,r}\ne\mu_{i,s}$ for $r\ne s$. Let
\begin{eqnarray*}C(t) := C_0 + t \cdot \diag(C_1, \ldots, C_l).\end{eqnarray*}
Then
$$\lim_{t \to 0} B(C(t)) = B(C_0) \otimes_{\bigotimes\limits_{i=1}^lZ(U(\fgl_{k_i}))} \bigotimes\limits_{i=1}^l \hat F(C_i),$$
where $\hat F(C_i)$ is the shift of argument subalgebra in $U(\fgl_{k_i}) \subset U(\fgl_n) \subset Y(\fgl_n)$ (the copy of $U(\fgl_n)$ in the Yangian $Y(\fgl_n)$ is generated by $t_{ij}^{(1)}$).
\end{thm}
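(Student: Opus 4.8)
The plan is to reduce both statements to the corresponding degeneration statements for shift of argument subalgebras (Lemma~\ref{size} and Proposition~\ref{tensorprod}) via the centralizer construction, using that $\eta_k$ is an asymptotic isomorphism (Theorem~\ref{qasym}) and that it is compatible with the two commuting sub-Yangians $i_m$ and $\psi_{n-m}$ (the Lemma following Theorem~\ref{qasym}). The key point, already established in Proposition~\ref{incl} and the discussion of Section~6 (which we may assume), is that $\eta_k$ maps $B(C)\otimes A_0$ into a non-regular shift of argument subalgebra $\hat F(C^{(k)})$ for a suitable diagonal matrix $C^{(k)}\in\fgl_{n+k}$ built from $C$, and that this image is asymptotically isomorphic to $B(C)\otimes A_0$. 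Thus a limit of Bethe subalgebras $\lim_{t\to0}B(C(t))$ is recovered, in every filtered degree, from the limit $\lim_{t\to0}\hat F(C(t)^{(k)})$ for $k$ large enough, and it suffices to compute the latter.

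First I would treat part~1). Writing $C(t)=\diag(C_0,\underbrace{0,\dots,0}_k)+t\,\diag(\underbrace{0,\dots,0}_{n-k},C_1)$, the associated family $C(t)^{(k)}\in\fgl_{n+k}$ is (up to the bookkeeping of the $k$ extra zero eigenvalues coming from the centralizer construction) a family of the type appearing in Lemma~\ref{size}, with two blocks of eigenvalues $\lambda_i$ and $\mu_j$ separated by a power of $t$. Applying Lemma~\ref{size} (in its $\hat F$ version, via the Corollary after Lemma~\ref{size}) gives that $\lim_{t\to0}\hat F(C(t)^{(k)})$ is the tensor product of the subalgebra attached to the $C_0$-block with the subalgebra attached to the $C_1$-block over the center of the corresponding block Lie algebra. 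Under the compatibility Lemma for $\eta_k$, the first tensor factor pulls back to $i_k(B(C_0))$ (together with the $A_0$-factor, which disappears in the asymptotic isomorphism) and the second pulls back to $\psi_{n-k}(B(C_1))$; the two sub-Yangians commute by Lemma~\ref{commuteyang} and their images under $\eta_k$ land in the commuting subalgebras $U(\fgl_{n+k})^{\fgl_{k+k}}$ and $U(\fgl_{k+k})^{\fgl_k}$, so the gluing center is trivial. Passing to the limit $k\to\infty$ and using the asymptotic isomorphism in each filtered degree yields $\lim_{t\to0}B(C(t))=i_k(B(C_0))\otimes\psi_{n-k}(B(C_1))$.

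For part~2) the family $C(t)=C_0+t\,\diag(C_1,\dots,C_l)$ with $C_0$ nondegenerate has all its perturbation \emph{inside} the first $n$ coordinates, so $C(t)^{(k)}$ is again a family of Shuvalov type: the leading term has blocks of sizes $k_1,\dots,k_l$ (plus the $k$ centralizer zeros, which stay put since $C_0$ is nondegenerate), and the first-order term resolves each block by the regular diagonal matrix $C_i$. By Lemma~\ref{size} (quantum version), $\lim_{t\to0}\hat F(C(t)^{(k)})=\hat F(C_0^{(k)})\otimes_{ZU(\bigoplus_i\fgl_{k_i})}\bigotimes_i\hat F(C_i)$, where $\hat F(C_i)\subset U(\fgl_{k_i})$ sits inside the first $n$ coordinates and hence commutes with the $\fgl_k$ that is being centralized. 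Pulling back through $\eta_k$: the factor $\hat F(C_0^{(k)})$ is $\eta_k(i_0(B(C_0))\otimes A_0)$, hence asymptotically $B(C_0)$ itself; the factors $\hat F(C_i)$ are untouched since $U(\fgl_n)\subset Y(\fgl_n)$ maps identically into $U(\fgl_{n+k})$ under $\Phi_k$ on the relevant generators $t_{ij}^{(1)}$; and $i_{k_0}(B(C))$ contains the center of each $U(\fgl_{k_i})$ because $B(C_0)$ contains $\pi_n(\qdet T(u))$-type central elements of the block subalgebras (this is where one invokes that $\tau_{k_i}(u,C_0)$ restricted to a block is, up to a scalar, the quantum determinant of that block). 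This gives the stated tensor-product formula over $\bigotimes_iZ(U(\fgl_{k_i}))$.

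The main obstacle I anticipate is the bookkeeping in matching $C(t)^{(k)}$ — the matrix controlling the shift of argument subalgebra that $\eta_k(B(C(t))\otimes A_0)$ sits inside — with the explicit Shuvalov families of Lemma~\ref{size}: one must check that the $k$ auxiliary eigenvalues introduced by the centralizer construction $\omega_{n+k}\circ i_k$ are \emph{constant in $t$} and either coincide with the zero block of $C_0$ (giving the $\fgl_{k+k}$ in part~1) or form their own separate block (in part~2), so that the relevant centralizer $\fgl_k$ is correctly absorbed into the $\bigoplus\fgl_{k_\lambda}$ of Lemma~\ref{size}. A secondary subtlety is ensuring that the asymptotic isomorphism is applied \emph{uniformly} enough: for each fixed filtered degree $N$ one chooses $k$ with $k>K(N)$, checks the limit formula in degree $N$, and then notes the formula is degree-independent; since $B(C(t))\otimes A_0\to\hat F(C(t)^{(k)})$ is an isomorphism in degrees $\le N$ for such $k$, no information is lost. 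Once these two points are settled, both formulas follow by transporting Lemma~\ref{size} through $\eta_k$ and the compatibility Lemma for the sub-Yangians.
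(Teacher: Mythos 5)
Your part 2 is essentially the paper's own argument: push the limit into the centralizer picture via $\eta_k$, identify $\lim_{t\to0}\hat F(C(t)^{(k)})$ by the Shuvalov-type degeneration (quantum version of Lemma~\ref{size}), use that $\eta_k$ restricts to the identity on $U(\fz_\fg(C_0))\subset Y(\fgl_n)$ to see the factors $\hat F(C_i)$ in the limit, and conclude by a size count (the paper makes this last step explicit via Proposition~\ref{bethesize}; in your write-up it is absorbed into the asymptotic-isomorphism statement of Proposition~\ref{incl}).

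For part 1, however, there is a genuine circularity. You take Proposition~\ref{incl} --- the containment $\eta_k(B(C)\otimes A_0)\subset\hat F(C^{(k)})$ together with the asymptotic isomorphism, also for limit algebras --- as an input ``which we may assume''. But in the paper Proposition~\ref{incl} is proved by invoking part 1 of Theorem~\ref{result2}: one considers $a(t)=\diag(C,0,\dots,0)+t\,\diag(0,\dots,0,C')$ in $Y(\fgl_{n+k})$, identifies $\lim_{t\to0}B(a(t))$ as $i_n(B(C^{(k)}))\otimes\psi_{n-k}(B(C'))$ precisely by part 1, and only then pushes down by $\pi_{n+k}\circ\omega_{n+k}$ and intersects with $U(\fgl_{n+k})^{\fgl_k}$. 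So part 1 cannot be derived from Proposition~\ref{incl} unless you give an independent proof of that proposition, which you do not. The paper proves part 1 by a different, more elementary route: $i_k(B(C_0))$ lies in the limit by setting $t=0$ in the generators $\tau_l(u,C(t))$, while the second factor is produced by the involution $\omega_n$ via Lemma~\ref{betheImage} ($\omega_n(B(C))=B(C^{-1})$), which exhibits $\psi_{n-k}(B(C_1))=\omega_n\circ\varphi_{n-k}\circ\omega_k(B(C_1))$ inside $\lim_{t\to0}B(C(t))$; equality then follows from Lemma~\ref{commuteyang} and a Poincar\'e series comparison of $i_k(B(C_0))\otimes\psi_{n-k}(B(C_1))$ with a generic Bethe subalgebra. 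A secondary unresolved point in your scheme is the one you yourself flag as the ``main obstacle'': the matrices $C(t)^{(k)}$ are never regular (they always carry the $k$ auxiliary zero eigenvalues, and in part 1 the perturbation is degenerate on the enlarged zero block), so Lemma~\ref{size} does not apply verbatim; the paper circumvents this by first adding a further perturbation $t^N D$ to land in a regular Shuvalov family and then intersecting with $\fgl_k$-invariants (Lemma~\ref{l36} and the proof of Proposition~\ref{incl}), a step your proposal gestures at but does not carry out.
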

\begin{rem} {\em
Theorems ~\ref{result1} and ~\ref{result2} are equivalent. Indeed, Theorem~\ref{result2} is a particular case of Theorem~\ref{result1}. On the other hand, any degenerate curve $X$ can be obtained from a non-degenerate one by, first, taking a limit from part 1 of Theorem~\ref{result2} ($m$ times where $m$ is the number of irreducible components of $X$ on the way between the marked points $0$ and $\infty$) and, second, taking a limit from part 2 of Theorem~\ref{result2} for each of that $m$ components. So Theorem~\ref{result1} follows from Theorem~\ref{result2}.} 
\end{rem}

\begin{rem} {\em For example, Theorem~\ref{result2} allows to describe explicitly the subalgebra corresponding to the degenerate curve which has exactly $n$ components between the points $0$ and $\infty$ with a unique marked point on each component (i.e. the so-called ``caterpillar curve''). This subalgebra is the same as $\lim\limits_{t_i\to0}B(\diag(1,t_1,t_1t_2,\ldots,t_1\cdot\ldots\cdot t_{n-1}))$. According to Theorem~\ref{result2} this limit subalgebra is the Gelfand-Tsetlin subalgebra of $Y(\fgl_n)$.

The opposite example is the degenerate $2$-component curve such that one component contains $0$ and $\infty$ while the other component contains all other marked points. The subalgebras corresponding to such curve have the form $\lim\limits_{t\to0}B(E+\diag(t\lambda_1,\ldots,t\lambda_n))$ which is according to Theorem~\ref{result2} the subalgebra generated by $B(E)$ and $\hat{F}(\diag(\lambda_1,\ldots,\lambda_n))$.} 
\end{rem}

\section{Proof of the main Theorems.}

Let $C = \diag(\underbrace{\lambda_1, \ldots, \lambda_1}_{k_1}, \underbrace{\lambda_2, \ldots, \lambda_2}_{k_2}, \ldots, \underbrace{\lambda_l, \ldots, \lambda_l}_{k_l})\in T$ be a non-regular element from the maximal torus. Let $d_i(C)$ be the number of homogeneous degree $i$ generators of $F(C)$ (i.e. the multiplicity of the factor $(1-x^i)$ in the Poincare series of $F(C)$).

\begin{prop}\label{bethesize}(Lower bound for the size of Bethe subalgebra)
There is a set of algebraically independent elements of $B(C)$ which consists of $\min(d_i(C)+i-1,n)$ elements of degree $i$ for all $i\in\BZ_{>0}$.
\end{prop}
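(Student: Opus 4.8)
The plan is to produce the required algebraically independent elements inside $B(C)$ by comparing $B(C)$ with the evaluation image $\hat F(C)\subset U(\fgl_n)$ and with the Bethe subalgebras of smaller Yangians sitting inside $Y(\fgl_n)$. Recall that under the evaluation homomorphism $\pi_n\colon Y(\fgl_n)\to U(\fgl_n)$ we have $\pi_n(B(C))=\hat F(C)$, and that by Lemma~\ref{fsize} the algebra $F(C)=\gr\hat F(C)$ is a free polynomial algebra with $d_i(C)$ generators in degree $i$; so $\hat F(C)$ already supplies $d_i(C)$ algebraically independent elements of degree $i$. The point is that $\pi_n$ collapses the Yangian filtration onto the $\fgl_n$-filtration, so an element of $\hat F(C)$ of degree $i$ in $U(\fgl_n)$ lifts to an element of $B(C)$ whose Yangian degree is also $i$ (one may take the preimage supported on $\tau_1,\dots,\tau_i$); moreover the leading terms of these lifts in $\gr B(C)=\bar B(C)\subset CY(\fgl_n)$ remain algebraically independent because, under the projection $CY(\fgl_n)\to S(\fgl_n)$ dual to $\pi_n$ on associated graded level, they map to a free family. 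This accounts for $\min(d_i(C),n)$ of the claimed elements — but we need $\min(d_i(C)+i-1,n)$, so for small $i$ (where $d_i(C)+i-1<n$, equivalently $i$ not too large) we must find $i-1$ extra independent elements of degree $i$.

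The extra elements should come from the coefficients of the quantum minors $\tau_k(u,C)$ themselves, read off directly rather than through evaluation. Write $\tau_k(u,C)=\sum_{r\ge0}\tau_k^{(r)}(C)u^{-r}$ as in Lemma~\ref{vid}. The coefficient $\tau_k^{(r)}$ has Yangian degree $r$, and its leading symbol in $CY(\fgl_n)$ is a polynomial in the $\bar t_{ij}^{(s)}$. The key observation is that for fixed $k$ the symbols $\bar\tau_k^{(1)},\dots,\bar\tau_k^{(r)}$ are algebraically independent in $CY(\fgl_n)$ for all $r$ — this is essentially the statement that the full collection of coefficients of $\tau_1,\dots,\tau_n$ freely generate $\bar B(C)$ for regular $C$ (Theorem~\ref{MaxFreeTheorem} together with the Nazarov–Olshanski result recalled in Section~\ref{def-lim}), combined with the fact that a single $\tau_k$ contributes one free generator in every positive degree. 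So from $\tau_k$ alone, for each $i>0$, we get one algebraically independent element of degree $i$; summing over those $k$ for which $\tau_k$ "survives" non-trivially gives more elements, and the count $i-1$ should be obtained by choosing, say, $\tau_1,\dots,\tau_{i}$ (giving $i$ elements, of which $i-1$ are new beyond the single lowest-degree invariant already counted), intersected with the upper bound $n$. I would organize the bookkeeping so that the $d_i(C)$ evaluation-image elements and the $i-1$ minor-coefficient elements together remain independent: the former involve only the images in $U(\fgl_n)$ (i.e. depend on $t_{ij}^{(1)}$), while the latter can be chosen to genuinely involve higher $t_{ij}^{(r)}$, so a degree/support argument on $CY(\fgl_n)$ separates them.

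The main obstacle will be the independence argument on the associated graded level — showing that the $d_i(C)$ lifts of generators of $\hat F(C)$ and the $i-1$ coefficients of $\tau_1,\dots,\tau_i$ have algebraically independent symbols in $CY(\fgl_n)$, and that the total does not exceed $n$ in a way that matches $\min(d_i(C)+i-1,n)$. I would handle this by a degeneration: specialize $C$ to a nearby regular element $\widetilde C\in T^{reg}$, where $B(\widetilde C)$ is free with a known Poincaré series (Theorem~\ref{MaxFreeTheorem}), observe that $B(C)$ is a flat degeneration of $B(\widetilde C)$ so that $\dim B_r(C)\ge\dim B_r(\widetilde C)$ is false in general but the degree-$i$ part can only shrink in a controlled way, and then match the controlled drop against the drop from $P_n(x)$ to the Poincaré series of $F(C)$ in Lemma~\ref{fsize}. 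Concretely, the difference between "$d_i(C)+i-1$" and the regular count is exactly the contribution of the factors $Z_{k_j}(x)/P_{k_j}(x)$, i.e. of the smaller Bethe/shift-of-argument pieces, and I expect the cleanest route is to exhibit the $i-1$ extra elements inside the sub-Yangians $i_{k_0}(B(\cdot))$ and $\psi(B(\cdot))$ of Lemma~\ref{commuteyang} whose product lies in $B(C)$, reducing the whole estimate to the regular case for those smaller Yangians by induction on $n$.
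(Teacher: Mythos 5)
Your first step is in line with the paper: the $d_i(C)$ degree-$i$ elements are indeed obtained there from the coefficients of $u^{-p}$, $p\le l$, of $\tau_l(u,C)$, whose leading symbols lie in $S(\fgl_n)\subset CY(\fgl_n)$ and generate the shift of argument algebra $F(C)$. The genuine gap is in your second step, the $i-1$ extra elements. Your ``key observation'' --- that for fixed $k$ the symbols of the coefficients of $\tau_k(u,C)$ are algebraically independent --- is justified by appealing to Theorem~\ref{MaxFreeTheorem} and the freeness of $\gr B(C)$ for \emph{regular} $C$; but Proposition~\ref{bethesize} is precisely about non-regular $C$, where that freeness is unavailable and exactly this independence (both among the extra elements and against the $d_i(C)$ evaluation-type elements) is what has to be proved. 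Neither of your fallback strategies closes this. Degenerating to a nearby regular $\widetilde C$ only gives information about the limit subalgebra $\lim_{t\to0}B(\widetilde C(t))$, not about the actual algebra $B(C)$ at the non-regular point, which is in general strictly smaller; no lower bound on the graded components of $B(C)$ follows from such a comparison (your own hedge ``is false in general but the degree-$i$ part can only shrink in a controlled way'' is not an argument). Likewise, the sub-Yangian Bethe algebras $i_{k_0}(B(\cdot))$, $\psi_{\cdot}(B(\cdot))$ and the shift-of-argument factors occur inside the \emph{limit} subalgebras of Theorem~\ref{result2}, not inside $B(C)$ itself for non-regular $C$, so the extra elements cannot be exhibited there, and the proposed induction on $n$ has nothing to run on.

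What is missing is a concrete independence mechanism valid at non-regular $C$, and this is the actual content of the paper's proof: one rescales the generators, $\tilde t_{ij}^{(p)}=\lambda_i\bar t_{ij}^{(p)}$ (legitimate because $C\in T$ is invertible), and introduces the auxiliary filtration $\deg\tilde t_{ij}^{(p)}=1$ on $CY(\fgl_n)$. With respect to it, the leading term of the coefficient of $\bar\tau_l(u,C)$ at $u^{-l-k}$, $k\ge1$, becomes a sum of minors $\tilde\Delta$ which no longer depends on $C$, contains $\tilde t^{(k+1)}_{ij}$ and no higher $\tilde t^{(p)}_{ij}$; algebraic independence is then verified once and for all by computing differentials at the special point $\tilde t_{ij}^{(1)}=\delta_{i,j-1}$, $\tilde t_{ij}^{(p)}=0$ for $p\ge2$, where the differential of the degree-$i$ coefficient of $\bar\tau_l$ (for $l<i$) is a nonzero combination of $\diff\tilde t_{ij}^{(i-l+1)}$ with $j-i=l-1$ --- disjoint sets of variables for distinct $l$, and disjoint from the $\tilde t^{(1)}$-variables carrying the $F(C)$-part. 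This gives $d_i(C)+(i-1)$ independent elements in degree $i$, capped at $n$ since only the $n$ series $\tau_1,\dots,\tau_n$ contribute one coefficient each; without an explicit argument of this kind your count remains an unproven assertion.
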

\begin{proof}
It is enough to check that in $\bar B(C) = \gr B(C) \subset CY(\fgl_n)$ we have enough algebraically independent generators (see $\ref{filtration}$).
The images of $\tau_l(u,C)$  are 
\begin{equation}
\bar\tau_l(u,C) = \sum_{i_1 < \ldots <i_l} \lambda_{i_1} \ldots \lambda_{i_l} \left(\sum_{\sigma \in S_l} \bar{t}_{i_1 i_{\sigma(1)}}(u) \ldots \bar{t}_{i_l i_{\sigma(l)}}(u)\right).
\end{equation}
Now we can change generators $\bar{t}_{ij}^{(p)} \to \tilde t_{ij}^{(p)} = \lambda_i \bar t_{ij}^{(p)}$.
Note that there is  a filtration in $CY(\fgl_n)$ given by
$$\deg \tilde t_{ij}^{(p)} = 1.$$
According to this filtration the leading term of the coefficient of $\bar\tau_l(u,C)$ at $u^{-p}$ for $p \geqslant l$ is $$\sum_{i_1 <\ldots <i_l} \lambda_{i_1} \ldots \lambda_{i_l} \sum_{p_1 + \ldots + p_l = p; \, p_1, \ldots, p_l> 0} \Delta_{i_1 \ldots i_l}^{(p_1 \ldots p_l)}=\sum_{i_1 <\ldots <i_l} \sum_{p_1 + \ldots + p_l = p; \, p_1, \ldots, p_l> 0} \tilde \Delta_{i_1 \ldots i_l}^{(p_1 \ldots p_l)}.$$
Here we set
\begin{equation}
\Delta_{i_1 \ldots i_m}^{(p_1 \ldots p_m)} = \det \begin{pmatrix}
\bar t_{i_1 i_1}^{(p_1)} & \bar t_{i_1 i_2}^{(p_1)} & \dots & \bar t_{i_1 i_m}^{(p_1)} \\

\bar t_{i_2 i_1}^{(p_2)} & \bar t_{i_2 i_2}^{(p_2)} & \dots & \bar t_{i_2 i_m}^{(p_2)} \\
\vdots & \vdots & \vdots & \vdots \\
\bar t_{i_m i_1}^{(p_m)} & \bar t_{i_m i_2}^{(p_m)} & \dots & \bar t_{i_m i_m}^{(p_m)}
\end{pmatrix};\ \tilde \Delta_{i_1 \ldots i_m}^{(p_1 \ldots p_m)} = \det \begin{pmatrix}
\tilde t_{i_1 i_1}^{(p_1)} & \tilde t_{i_1 i_2}^{(p_1)} & \dots & \tilde t_{i_1 i_m}^{(p_1)} \\

\tilde t_{i_2 i_1}^{(p_2)} & \tilde t_{i_2 i_2}^{(p_2)} & \dots & \tilde t_{i_2 i_m}^{(p_2)} \\
\vdots & \vdots & \vdots & \vdots \\
\tilde t_{i_m i_1}^{(p_m)} & \tilde t_{i_m i_2}^{(p_m)} & \dots & \tilde t_{i_m i_m}^{(p_m)}
\end{pmatrix} 
\end{equation}

We see that, for $p\ge l$ the leading terms of such coefficients do not depend on $C$ if we change generators to $\tilde t_{ij}^{(p)}$. Moreover, we see that the leading terms of the coefficients of $\bar \tau_l(u,C)$ at $u^{-l}$ consists of only $\tilde t_{ij}^{(1)}$, coefficients of $u^{-l-1}$ of $\bar\tau_l(u,C)$ contain $\tilde t_{ij}^{(2)}$ and do not contain $ \tilde t_{ij}^{(p)}, p >2$ and so on.  Generally, we have the following

\begin{lem}
The leading term of the coefficient of $\bar \tau_l(u,C)$ at $u^{-l-k}$ does contain $\tilde t_{ij}^{(k+1)}$ and does not contain $\tilde t_{ij}^{(p)}, p > k+1$.
\end{lem}

The leading terms of the coefficients of $u^{-p}, p \leqslant l$ of $\tau_l(u,C)$ are polynomials in $\tilde t_{ij}^{(1)}$ hence can be regarded as elements of $S(\fgl_n)\subset CY(\fgl_n)$ generated by $\tilde t_{ij}^{(1)}$. 

\begin{lem} The leading terms of the coefficients of $u^{-p}, p \leqslant l$ of $\tau_l(u,C)$, $l=1,\ldots,n$ generate the shift of argument subalgebra $F(C)$ in $S(\fgl_n)$. 
\end{lem}

\begin{proof} Set $\bar T^{(1)}:=\sum\limits_{i,j}\bar t_{ij}^{(1)}\otimes e_{ij}\in S(\fgl_n)\otimes Mat_n$. Then the generating function for the leading terms is ${\rm Tr} \, \Lambda^lC(E+T^{(1)}u^{-1})= {\rm Tr} \, \Lambda^l (CE+\tilde T^{(1)}u^{-1})$ which is the generating function for the derivatives of the $l$-th coefficient of the characteristic polynomial. 
\end{proof}

To see that the set of coefficients of $u^{-l-k}$ of $\bar\tau_l(u,C)$ are algebraically independent let us compute differential at point $\tilde t_{ij}^{(1)} = \delta_{i, j-1}$, $\tilde t_{ij}^{(p)} = 0$. It is straightforward computation that
$$\diff \tilde \Delta_{i_1 \ldots i_l}^{(p_1 \ldots p_l)} =
\begin{cases}
(-1)^{l-1} \cdot \diff \tilde t_{i_1, i_1 + l - 1}^{(p-l+1)}, & p_l = p-(l-1), p_{l-1} = 1, \ldots, p_1 = 1, \\
&  i_2 = i_1 + 1, \ldots i_l = i_{l-1} + 1\\
0, & {\text else.}
\end{cases}$$
Hence the differential of the coefficient of $\bar\tau_l(u,C)$ at $u^{-l-k}$ is a non-zero linear combination of $\diff \tilde t_{ij}^{(k+1)}$ with $j-i = l-1$. Hence differentials of all coefficients with $p \geqslant l$ are linearly independent. Therefore if we have algebraic dependence between some coefficients, then it can only consist of coefficients $u^{-p}$ with $p \leqslant l$.

The following tableaux shows the greater number $r$, such that there is some $t_{ij}^{(r)}$ at this coefficient:
\renewcommand{\arraystretch}{1.4}
\begin{center}
 \begin{tabular}{|m{1.5cm} | m{1.2cm} | m{1.2cm} | m{1.2cm} | m{1.2cm} | m{1.2cm}| m{1.2cm}| m{1.2cm}| m{1.2cm}|}
\hline
& $u^{-1}$  &  $u^{-2}$ & $u^{-3}$ & \ldots   &  $u^{-(n-1)}$ & $u^{-n}$ & $u^{-(n+1)}$ &\ldots  \\ 
\hline
$\tau_1(u,C)$ & 1 & 2 & 3 & \ldots & $n-1$ & $n$ & $n+1$ & \ldots \\ 
\hline
$\tau_2(u,C)$ & 1 & 1  & 2 & \ldots & $n-2$ & $n-1$ & $n$ & \ldots \\
\hline
$\tau_3(u,C)$ & 1 & 1 & 1 & \ldots & $n-3$ & $n-2$ & $n-1$ & \ldots \\
\hline
$\ldots$ & & & & & & & & \\
\hline
$\tau_n(u,C)$ & 1 & 1 & 1 & \ldots &  1 & 1 & 2 & \ldots\\ 
\hline
\end{tabular}
\end{center}

On the other hand, the  coefficients of $u^{-p}, p \leqslant l$ of $\tau_l(u,C)$, $l=1,\ldots,n$ generate the shift of argument subalgebra $F(C)$ in $S(\fgl_n)$ hence we have the desired number of algebraically independent generators. 


\end{proof}

Recall the following fact from \cite{molev2}:

\begin{lem}
\label{togd}
\cite{molev2}[Theorem 1.10.7]
${\rm qdet} \, T(u) \cdot \omega_n(t^{j_{m+1}\ldots j_n}_{i_{m+1}\ldots i_n}(-u+n-1)) = \sgn p \cdot \sgn q \cdot t^{i_1 \ldots i_m}_{j_1\ldots j_m}(u)$, where $p = \left(\begin{smallmatrix}
1 & 2 & \ldots & n \\
i_1 & i_2 & \ldots & i_n \end{smallmatrix}\right), q = \left(\begin{smallmatrix}
1 & 2 & \ldots & n \\
j_1 & j_2 & \ldots & j_n \end{smallmatrix}\right) \in S_n$.
\end{lem}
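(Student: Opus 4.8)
The plan is to recognize this identity as the noncommutative (``quantum'') analogue of Jacobi's ratio theorem for the complementary minors of a matrix and of its inverse, and to prove it by induction on the size of the complementary minor, the quantum Cramer rule serving as the base case. (This is essentially the route by which the cited statement in \cite{molev2} is obtained.) First I would unwind $\omega_n$: since $\omega_n$ is an algebra automorphism with $\omega_n(T(v))=T(-v-n)^{-1}$, it sends the quantum minor $t^{j_{m+1}\dots j_n}_{i_{m+1}\dots i_n}(v)$ --- a fixed polynomial in the generators $t_{ab}^{(r)}$ --- to the same polynomial in the entries of the matrix series $w\mapsto T(-w-n)^{-1}$, i.e.\ to the corresponding quantum minor of the inverse matrix on the rows $\{j_{m+1},\dots,j_n\}$ and columns $\{i_{m+1},\dots,i_n\}$ (at the argument dictated by substituting $v=-u+n-1$). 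So the left-hand side of the Lemma becomes $\qdet T(u)$ times a quantum minor of $T(\,\cdot\,)^{-1}$, and the claim takes the form: this minor of the inverse equals $\sgn p\cdot\sgn q$ times $\qdet T(u)^{-1}$ times the complementary quantum minor $t^{i_1\dots i_m}_{j_1\dots j_m}(u)$ of $T$ itself --- exactly the quantum Jacobi ratio identity.

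To prove the latter I would induct on $k=n-m$. The base case $k=1$ (that is, $m=n-1$) is the quantum comatrix identity, i.e.\ the quantum Cramer rule: the entries of $T(u)^{-1}$ are $\qdet T(u)^{-1}$ times the quantum cofactors $(-1)^{i+j}t^{1\dots\hat\jmath\dots n}_{1\dots\hat\imath\dots n}(\,\cdot\,)$ of $T$ (at a suitably shifted argument), which is standard. For the inductive step I would expand the size-$k$ quantum minor of $T(u)^{-1}$ along one row using the Laplace-type expansion for quantum minors, apply the induction hypothesis to the size-$(k-1)$ minors of $T(u)^{-1}$ that appear, and recombine the result using that $\qdet T(u)$ is central in $Y(\fgl_n)$ (as recalled above), so it slides out of every term unchanged and can be collected. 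The reordering of each row index and of each column index into increasing order contributes precisely a factor $\sgn p$ or $\sgn q$, exactly as in the classical Jacobi ratio theorem, so the signs assemble correctly.

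The hard part is the bookkeeping of arguments and signs rather than the shape of the induction. One must check that the three shifts in play --- the $u,u-1,\dots,u-k+1$ built into a quantum minor, the $-v-n$ in the definition of $\omega_n$, and the further evaluation at $v=-u+n-1$ --- conspire so that the arguments of the complementary minors of $T$ on the two sides of the identity actually coincide (this is what forces the particular value $-u+n-1$ in the statement), and that the quantum Laplace expansion produces no extra sign or argument corrections beyond $\sgn p\cdot\sgn q$. Once the shift conventions and the base case are pinned down, the induction itself is essentially formal, because $\qdet T(u)$ is central and the quantum minors obey the same Laplace expansion as ordinary minors.
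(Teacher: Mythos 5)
The paper does not actually prove this lemma: it is quoted verbatim from Molev's book (\cite{molev2}, Theorem~1.10.7) and used as a black box, so there is no internal argument of the paper to compare yours against; I can only assess your sketch on its own terms and against the cited source.

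Your setup is right: since $\omega_n$ is an automorphism with $\omega_n(T(u))=T(-u-n)^{-1}$, the matrix $\widetilde S(u):=\omega_n(T(u))$ again satisfies the RTT relations, $\omega_n$ takes quantum minors of $T$ to quantum minors of $\widetilde S$, and the lemma becomes a quantum Jacobi ratio identity. But as written the proposal stops exactly where the content of the lemma lies. Note that under $u\mapsto -u-n$ the successive shifts $v,v-1,\dots,v-k+1$ inside a minor reverse direction, so in terms of $T^{-1}$ the left-hand side involves arguments increasing by one; the Laplace-type expansion you invoke must therefore be established for minors of the inverse series in that convention, and relating minors of $T^{-1}$ to minors of $T$ is a statement of the same nature as the lemma itself, so it cannot simply be ``recombined'' formally. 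Likewise the specific evaluation point $-u+n-1$ and the factor $\sgn p\cdot\sgn q$ are precisely what has to be verified, and you explicitly defer this (``the hard part is the bookkeeping''), so what you have is an outline, not a proof. Finally, the claim that this is essentially the route of \cite{molev2} is not accurate: there the identity is obtained directly from the antisymmetrizer formalism --- starting from $A_nT_1(u)\cdots T_n(u-n+1)=A_n\,\qdet T(u)$, multiplying by inverses of some of the factors, and taking matrix elements, together with the description of how $\omega_n$ acts on quantum minors --- with no induction on the size of the minor; the shift $-u+n-1$ and the signs fall out of that single computation. To make your argument complete, either carry out that antisymmetrizer computation, or genuinely prove the base case (the quantum Cramer/comatrix identity with its exact shift) and a quantum Laplace expansion with the precise shifted arguments before running the induction.
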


This implies the following

\begin{lem}
\label{betheImage}
Suppose that $C \in T$ is non-degenerate. Then
$\omega_n(B(C)) = B(C^{-1})$.
\end{lem}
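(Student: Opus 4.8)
The plan is to make $\omega_n$ act explicitly on the generators $\tau_k(u,C)$ of $B(C)$ and identify the result with generators of $B(C^{-1})$ modulo the center, using Lemma~\ref{togd}. By Lemma~\ref{vid} we have $\tau_k(u,C)=\sum_{b_1<\ldots<b_k}\lambda_{b_1}\cdots\lambda_{b_k}\,t^{b_1\ldots b_k}_{b_1\ldots b_k}(u)$, so everything reduces to understanding $\omega_n$ on the principal quantum minors $t^{b_1\ldots b_k}_{b_1\ldots b_k}$. First I would apply Lemma~\ref{togd} with $m=n-k$, taking $\{i_{m+1},\ldots,i_n\}=\{j_{m+1},\ldots,j_n\}=\{b_1<\ldots<b_k\}$ and $\{i_1,\ldots,i_m\}=\{j_1,\ldots,j_m\}=\{a_1<\ldots<a_m\}$ the complementary index set in increasing order; then the permutations $p$ and $q$ in Lemma~\ref{togd} coincide, the sign is $+1$, and, since $\qdet T(u)$ is central and invertible in $Y(\fgl_n)[[u^{-1}]]$, we get
$$\omega_n\bigl(t^{b_1\ldots b_k}_{b_1\ldots b_k}(-u+n-1)\bigr)=\qdet T(u)^{-1}\,t^{a_1\ldots a_m}_{a_1\ldots a_m}(u).$$

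Next I would sum this identity over $\vec b$ against the coefficients $\lambda_{b_1}\cdots\lambda_{b_k}$. Since the map $\vec b\mapsto\vec a$ from increasing $k$-tuples to increasing $(n-k)$-tuples is a bijection and $\lambda_{b_1}\cdots\lambda_{b_k}=\det C\cdot(\lambda_{a_1}\cdots\lambda_{a_m})^{-1}$, using Lemma~\ref{vid} for $C^{-1}=\diag(\lambda_1^{-1},\ldots,\lambda_n^{-1})$ this collapses to the identity
$$\omega_n\bigl(\tau_k(-u+n-1,\,C)\bigr)=\det C\cdot\qdet T(u)^{-1}\cdot\tau_{n-k}(u,\,C^{-1}),\qquad k=1,\ldots,n.$$
The coefficients (in powers of $u^{-1}$) of $\tau_{n-k}(u,C^{-1})$ lie in $B(C^{-1})$ by definition. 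The coefficients of $\qdet T(u)$ lie in $B(C^{-1})$ as well, because $C$ is non-degenerate, so $\qdet T(u)=n!\,(\det C^{-1})^{-1}\tau_n(u,C^{-1})$; and since $\qdet T(u)$ has invertible leading term, the coefficients of $\qdet T(u)^{-1}$ are polynomials in those of $\qdet T(u)$, hence also lie in $B(C^{-1})$. As $B(C^{-1})$ is a subalgebra, all coefficients of the right-hand side lie in $B(C^{-1})$. Because the substitution $u\mapsto -u+n-1$ induces an invertible triangular change of the expansion coefficients, the coefficients of $\tau_k(-u+n-1,C)$ span the same subspace as those of $\tau_k(u,C)$, which generate $B(C)$; therefore $\omega_n(B(C))\subseteq B(C^{-1})$.

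Finally, applying this inclusion with $C^{-1}$ in place of $C$ gives $\omega_n(B(C^{-1}))\subseteq B(C)$, and applying the involution $\omega_n$ (recall $\omega_n^2=\mathrm{id}$) to both sides yields $B(C^{-1})\subseteq\omega_n(B(C))$; the two inclusions give $\omega_n(B(C))=B(C^{-1})$. I expect the only delicate point to be bookkeeping: pinning down the spectral-parameter shift $u\mapsto -u+n-1$ and the sign and normalization constants in Lemma~\ref{togd} exactly, and verifying that passing to this shifted parameter does not alter the generated subalgebra. The rest is formal manipulation with central series.
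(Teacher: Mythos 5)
Your proposal is correct and follows essentially the same route as the paper: both apply Lemma~\ref{togd} to the principal quantum minors with complementary index sets (so the signs cancel), use $\lambda_{b_1}\cdots\lambda_{b_k}=\det C\,(\lambda_{a_1}\cdots\lambda_{a_{n-k}})^{-1}$ to identify the result with $\tau_{n-k}(u,C^{-1})$ up to the central invertible factor $\det C\cdot\qdet T(u)^{-1}$, and observe that the substitution $u\mapsto -u+n-1$ does not change the generated subalgebra. Your explicit two-inclusion argument via $\omega_n^2=\mathrm{id}$ is just a slightly more pedantic way of drawing the conclusion the paper states directly.
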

\begin{proof}
Note that the series ${\rm qdet} \, T(u)$ is invertible so multiplying by ${\rm qdet} \, T(u)$ is a bijection on a set of generators of $B(C)$. Moreover $T(u) \to T(u-c)$ is invertible automorphism of $Y(\fgl_n)$ so we can consider $\tau_k(-u+n-1)$ instead of $\tau_k(u,C)$ as a generators of $B(C)$.
By Lemma \ref{togd}
\begin{multline*}
(\det C)^{-1} \cdot \qdet T(u) \cdot \omega_n(\tau_k(-u+n-1,C)) = \\ =(\det C)^{-1} \cdot \sum_{1 \leqslant a_1< \ldots < a_k \leqslant n} \lambda_{a_1} \ldots \lambda_{a_k} \qdet T(u) \cdot \omega_n(t_{a_1, \ldots, a_k}^{a_1,\ldots,a_k}(-u+n-1)) = \\ = (\det C)^{-1} \sum_{1 \leqslant a_1< \ldots < a_k \leqslant n} \lambda_{a_1} \ldots \lambda_{a_k} t_{a_{k+1}, \ldots, a_n}^{a_{k+1},\ldots,a_n}(u) =\\=  \sum_{1 \leqslant a_1< \ldots < a_k \leqslant n} \lambda_{a_{k+1}}^{-1} \ldots \lambda_{a_n}^{-1} t_{a_{k+1}, \ldots, a_n}^{a_{k+1},\ldots,a_n}(u) = \tau_{n-k}(u, C^{-1}).
\end{multline*}
\end{proof}

\subsection{Proof of Theorem ~\ref{result2}, part 1.}
\label{pf1} We have $i_{k}(B(C_0))$ in the limit subalgebra. To its generators one can just take the standard generators of the Bethe subalgebra
$\tau_k(u,C_0 + t \cdot C_1), 1 \leqslant k \leqslant n,$ and put $t = 0$. 

On the other hand, from Lemma \ref{betheImage} we have the following:
\begin{multline*}
\lim_{t \to 0} B(a(t)) = \lim_{t \to 0} \omega_n(B(a(t)^{-1})) =  \omega_n(\lim_{t \to 0} B(a(t)^{-1})) \supset \\ \supset (\omega_n \circ \varphi_{n-k}) (B(C_1^{-1}))  = (\omega_n \circ \varphi_{n-k} \circ \omega_{k}) (B(C_1)) = \psi_{n-k}(B(C_1)).
\end{multline*}
So the limit subalgebra contains $i_k(B(C_0)) \cdot \psi_{n-k}(B(C_1))$. The latter is $i_k(B(C_0)) \otimes \psi_{n-k}(B(C_1))$ due to Lemma \ref{commuteyang}.
Since $C_0$ and $C_1$ are regular elements of $gl_k$ and $gl_{n-k}$ respectively, the tensor product has $i_k(B(C_0)) \otimes \psi_{n-k}(B(C_1))$ has the same Poincare series as generic Bethe algebra, therefore the limit is in fact equal to $i_k(B(C_0)) \otimes \psi_{n-k}(B(C_1))$. So Theorem~\ref{result2} (1) is proved.

\begin{prop}
\label{incl}
Let $C\in T$ be any non-degenerate diagonal matrix. Set $C^{(k)}:=\diag(C,\underbrace{0,\ldots,0}_k)$. Then $\eta_k(B(C) \otimes A_0) \subset \hat F(C^{(k)})$ for any non-degenerate $C \in \fh$ and $k \in \mathbb{N}$.
Moreover, the restrictions of $\{\eta_k\}$ to $B(C)$ give an asymptotic isomorphism of $B(C)$ and $\hat{F}(C^{(k)})$, $k\to\infty$.
The same is true for all limit algebras of the family $B(C)$, i.e. we have $\eta_k(\lim\limits_{t\to0}B(C(t)) \otimes A_0) \subset \lim\limits_{t\to0}\hat F(C(t)^{(k)})$.
\end{prop}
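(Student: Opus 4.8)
The plan is to transport everything to the shift-of-argument side, where the analogous closure problem is already understood. The first step is to identify $\eta_k|_{B(C)\otimes A_0}$ explicitly. Since $\eta_k(a\otimes b)=\Phi_k(a)z_k(b)$ and $z_k(A_0)=Z(U(\fgl_{n+k}))$, and every shift-of-argument subalgebra contains the Casimirs (take the zeroth derivative of the basic invariants), we have $\eta_k(1\otimes A_0)\subset\hat F(C^{(k)})$ for free. For the first factor: using Lemma~\ref{vid} inside $Y(\fgl_{n+k})$ one checks $i_k(\tau_l(u,C))=\tau_l(u,C^{(k)})$ for $l=1,\dots,n$, while $\tau_l(u,C^{(k)})=0$ for $l>n$ (any $l$-subset of $\{1,\dots,n+k\}$ with $l>n$ meets the zero block), so $i_k(B(C))=B(C^{(k)})$ and hence $\eta_k(B(C)\otimes1)=\Phi_k(B(C))=\pi_{n+k}\bigl(\omega_{n+k}(B(C^{(k)}))\bigr)$.

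Next I would prove the main inclusion $\Phi_k(B(C))\subset\hat F(C^{(k)})$ by deformation to the regular locus. Choose $N=\diag(0,\dots,0,\nu_1,\dots,\nu_k)$ with the $\nu_i$ distinct and nonzero; then $C^{(k)}_t:=C^{(k)}+tN$ is regular and nondegenerate for small $t\neq0$, and $\tau_l(u,C^{(k)}_t)\to\tau_l(u,C^{(k)})$ for $l\le n$, so $B(C^{(k)})=i_k(B(C))\subset\lim_{t\to0}B(C^{(k)}_t)$. Applying the fixed homomorphism $\pi_{n+k}\circ\omega_{n+k}$ (which commutes with Zariski limits of subspaces) and Lemma~\ref{betheImage} ($\omega_{n+k}(B(C^{(k)}_t))=B((C^{(k)}_t)^{-1})$) yields $\Phi_k(B(C))\subset\lim_{t\to0}\hat F\bigl((C^{(k)}_t)^{-1}\bigr)$. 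By Shuvalov's Theorem~\ref{shuvalov} (equivalently Lemma~\ref{size} together with the Corollary after Lemma~\ref{l36}) the right-hand side is a genuine limit shift-of-argument subalgebra of $U(\fgl_{n+k})$, and using $\omega_{n+k}\circ\psi_n=\varphi_n\circ\omega_k$ (from the definition of $\psi_n$ and $\omega_{n+k}^2=\mathrm{Id}$) together with the fact that $\pi_{n+k}\circ\varphi_n$ is evaluation into the bottom-right $\fgl_k$-block, this limit splits as a product of $\Phi_k(B(C))$ with a copy of a full shift-of-argument subalgebra of $U(\fgl_k)$ placed in that block.

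The delicate point — and I expect the main obstacle — is to conclude that $\Phi_k(B(C))$ lands in the "small" (non-maximal) algebra $\hat F(C^{(k)})$ rather than only in the larger maximal limit just produced. Here I would combine three inputs: (i) $\mathrm{Im}\,\Phi_k\subset U(\fgl_{n+k})^{\fgl_k}$ from \cite[Prop.~8.4.2]{molev2}; (ii) $\hat F(C^{(k)})$ is maximal commutative in $U(\fgl_{n+k})^{\fgl_k}$ by Lemma~\ref{l36}(1) and its Corollary; (iii) the tensor-product-over-the-center structure of the limit, via Lemma~\ref{tensor} applied to $\fgl_k\subset\fgl_{n+k}$, whose $\fgl_k$-invariant part is exactly $\hat F(C^{(k)})$ — this is precisely what Lemma~\ref{l36} (built on Lemma~\ref{size}) records for the family $F(\diag(C,0,\dots,0))$. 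Feeding $\Phi_k(B(C))\subset U(\fgl_{n+k})^{\fgl_k}$ into the factored limit then forces it into the invariant factor $\hat F(C^{(k)})$; together with the first paragraph this gives $\eta_k(B(C)\otimes A_0)=\Phi_k(B(C))\cdot Z(U(\fgl_{n+k}))\subset\hat F(C^{(k)})$. The same argument runs verbatim starting from a $1$-parameter family $C(t)$ instead of $C$, since the inclusion "$\eta_k(B(C)\otimes A_0)\subset\hat F(C^{(k)})$" is a closed condition on the pair of points of the ambient Grassmannians and $\eta_k$ is a fixed filtered map, hence it passes to the Zariski closure in the parameter; this yields $\eta_k\bigl(\lim_{t\to0}B(C(t))\otimes A_0\bigr)\subset\lim_{t\to0}\hat F(C(t)^{(k)})$.

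Finally, for the asymptotic-isomorphism statement I would compare sizes. Theorem~\ref{qasym} makes $\eta_k$ injective on $(Y(\fgl_n)\otimes A_0)_N$ for $k$ large, hence injective on $(B(C)\otimes A_0)_N$. By Theorem~\ref{MaxFreeTheorem} and $\deg\mathcal E_i=i$ the Poincaré series of $B(C)\otimes A_0$ is $\prod_{r\ge1}(1-x^r)^{-(n+1)}$, while by Lemma~\ref{l36}(2) that of $\hat F(C^{(k)})$ is $\prod_{j=1}^{k}(1-x^j)^{-(n+1)}(1-x^{k+1})^{-n}(1-x^{k+2})^{-(n-1)}\cdots(1-x^{n+k})^{-1}$; these agree in all degrees $\le k$. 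Therefore, once $k>\max(N,K(N))$, the injection $(B(C)\otimes A_0)_N\hookrightarrow\hat F(C^{(k)})_N$ is an equality of dimensions, i.e. an isomorphism, which is exactly the asserted asymptotic isomorphism of $B(C)\otimes A_0$ with $\hat F(C^{(k)})$.
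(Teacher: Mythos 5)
Your overall strategy is the same as the paper's (deform $C^{(k)}$ inside the zero block to a regular matrix, push the Bethe algebra to $U(\fgl_{n+k})$ via $\pi_{n+k}\circ\omega_{n+k}$, identify the containing limit shift-of-argument algebra via Shuvalov/Lemma~\ref{size}, and cut by $\fgl_k$-invariance using ${\rm Im}\,\Phi_k\subset U(\fgl_{n+k})^{\fgl_k}$), but there is an inversion error at the load-bearing step. The evaluation of a Bethe subalgebra is the shift-of-argument algebra of the \emph{inverse} matrix: $\pi_{n+k}(B(D))=\hat F(D^{-1})$ (this is the fact from \cite{taras2} that the paper quotes). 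You apply Lemma~\ref{betheImage} to account for $\omega_{n+k}$ but then implicitly use $\pi_{n+k}(B(D))=\hat F(D)$, arriving at $\Phi_k(B(C))\subset\lim_{t\to0}\hat F\bigl((C^{(k)}_t)^{-1}\bigr)$. The two inversions actually cancel, and the correct containing family is $\lim_{t\to0}\hat F(C^{(k)}_t)=\hat F(C^{(k)})\otimes_{Z(U(\fgl_k))}\hat F(\diag(\nu_1,\ldots,\nu_k))$. This is not cosmetic: after rescaling, $\lim_{t\to0}\hat F((C^{(k)}_t)^{-1})$ is the Shuvalov degeneration at $C_0'=\diag(0,\ldots,0,\nu_1^{-1},\ldots,\nu_k^{-1})$ (zero now has multiplicity $n$ and the distinct eigenvalues sit in the $\fgl_k$-block), whose elements are not $\fgl_k$-invariant and whose intersection with $U(\fgl_{n+k})^{\fgl_k}$ is not $\hat F(C^{(k)})$; so as written your paragraphs two and three do not produce the inclusion $\Phi_k(B(C))\subset\hat F(C^{(k)})$. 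With the corrected identification the argument closes exactly as in the paper. Relatedly, your claim that the limit ``splits as a product of $\Phi_k(B(C))$ with a full shift-of-argument subalgebra of $U(\fgl_k)$'' is unjustified (for fixed $k$ one only knows $\Phi_k(B(C))\subset\hat F(C^{(k)})$, and the Poincar\'e series differ above degree $k$); the correct first factor is $\hat F(C^{(k)})$, which is what your item (iii) in fact uses, so this slip is not fatal.

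A second gap concerns the scope of the asymptotic-isomorphism claim. The proposition allows any non-degenerate diagonal $C$, including non-regular ones (and this generality is used later, in the proof of Theorem~\ref{result2}(2)). Your dimension count rests on Theorem~\ref{MaxFreeTheorem}, i.e. on $B(C)$ having exactly $n$ free generators in every degree, which holds only for $C$ with pairwise distinct eigenvalues; for non-regular $C$ the size of $B(C)$ is not known a priori, and this is precisely why the paper invokes Proposition~\ref{bethesize} (the lower bound $\min(d_i(C)+i-1,n)$ on the number of independent degree-$i$ elements), which matches the number of degree-$i$ generators of $\hat F(C^{(k)})$ in degrees up to roughly $k$. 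So your argument proves the ``moreover'' clause only for regular $C$; to cover the stated generality you need the lower bound of Proposition~\ref{bethesize} (or an equivalent estimate). For regular $C$ your injectivity-plus-Poincar\'e-series comparison via Theorem~\ref{qasym} is fine, and your closedness argument for passing the inclusion to limit subalgebras matches the paper's.
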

\begin{proof}
We have $\eta_k = \pi_{n+k} \circ \omega_{n+k} \circ i_k$. Consider $a(t) = \diag(C,\underbrace{0, \ldots, 0}_k)+ t\cdot\diag(0, \ldots , 0, C^{\prime})$ and subalgebra $B(a(t))$ in $Y(\fgl_{n+k})$. Then by Theorem~\ref{result2} (1) the limit subalgebra $\lim\limits_{t\to0}B(a(t))$ is equal to $i_n(B(C^{(k)})) \otimes \psi_{n-k}(B(C^{\prime}))$. Note that $B(C^{(k)}) = i_k(B(C))$. We have  
\begin{multline*}
\pi_{n+k} \circ \omega_{n+k}(\lim_{t \to 0} B(a(t)) \otimes A_0) \subset \lim_{t \to 0}(\pi_{n+k} \circ \omega_{n+k}(B(a(t)) \otimes A_0)  = \\ = \lim_{t \to 0} \hat F(a(t))) =  \hat F(C^{(k)}) \otimes_{Z(U(\fgl_k))} \hat F(C^{\prime}).
\end{multline*}
We use here the fact that $\pi_{n+k}(B(a(t)) = \hat F(a(t)^{-1})$, see ~\cite{taras2}.
But $\eta_k(B(C)) \subset U(\fgl_{n+k})^{\fgl_k}$ therefore $\eta_k(B(C) \otimes A_0) \subset \hat F(C^{(k)}) \otimes_{Z(U(\fgl_k))} \hat F(C^{\prime})\cap U(\fgl_{n+k})^{\fgl_k} = \hat F(C^{(k)})$.
The fact that $\{\eta_k\}$ is asymptotic isomorphism follows from Lemma~\ref{bethesize}.
For limit algebras it follows from the fact that $\eta_k(\lim_{t \to 0}(B(C(t))\otimes A_0)) \subset \lim_{t \to 0} F(C(t)^{(k)},0, \ldots,0)$ and the fact that limit algebras have the same Poincare series.
\end{proof}

\subsection{Proof of Theorem ~\ref{result2}, part 2.}
Analogously to \ref{pf1}, we have $B(C_0)$ in the limit subalgebra. Also we know that
$$\eta_k(\lim_{t \to 0} (B(C(t))\otimes A_0) \subset \lim_{t \to 0} \hat{F}(C(t)^{(k)}) = \hat F(C_0^{(k)}) \otimes_{ZU(\mathfrak{z}(C_0))
} \hat F(C_1).$$
We see that $B(C_0) \otimes_{Z(U(\mathfrak{z}({C_0})))} \hat F(C_1) \otimes A_0$ maps to $\hat F(C_0^{(k)}) \otimes_{ZU(\mathfrak{z}(C_0))} \hat F(C_1)$.
Moreover,$\eta_k$ is asymptotic isomorphism and  restriction of $\eta_k$ to   $U(\mathfrak{z}(C_0)) \subset Y(\fgl_{n})$ is identity, so we have $\hat F(C_1)$ in the limit subalgebra as well. But using Proposition~\ref{bethesize} we see  that the Poincare series of  $B(C_0) \otimes_{Z(Y(\fgl_k))} \otimes \hat F(C_1)$ coincides with that of $B(C(t))$ for generic $t$.
\subsection{Proof of Theorem ~\ref{result}}
From Proposition ~\ref{incl} we know that family \{$\eta_k$\} is asymptotic isomorphism between $B(C) \otimes A_0$ and $\hat F(\diag(C,\underbrace{0,\ldots,0}_k))$. According to Lemma ~\ref{l36} all algebras $\hat F(\diag(C,\underbrace{0,\ldots,0}_k))$ form the family $\mathfrak{A}$. Limit subalgebras of this family are parameterized by $\ol{M_{0,n+2}}$ for any $k > 0$. Therefore limit Bethe subalgebras are parameterized by $\ol{M_{0,n+2}}$ as well.
Additionally, all limit algebras of the family $\mathfrak{A}$ are free and maximal commutative by Lemma ~\ref{l36}. Therefore all limit Bethe subalgebras in the Yangian are free and maximal commutative as well. 

\subsection{Proof of Theorem ~\ref{result1}} We have already proved the first part of the Theorem. 
We only need to see that $i_{k_0}(B(C))$ contains the center of every $U(\fgl_{k_\lambda})\subset i_{k_0}(Y(\fgl_{n-k_0}))$.
We know that the image $\eta_k(B(C) \otimes A_0)$ contains the center of every $U(\fgl_{k_\lambda})$ for any $k$.
Also we know that $A_0$ maps to the center of $U(gl_{n+k})$ by $\eta_k$. But from ~\cite[Lemma 10.1]{Kn} it follows that the generators of the center of $U(gl_{n+k})$ and of the centers of all $U(\fgl_{k_\lambda})$ are algebraically independent.
Using last time the fact that $\eta_k$ is asymptotic isomorphism we obtain the result.

\newpage

\bigskip
\footnotesize{
{\bf Leonid Rybnikov} \\
National Research University
Higher School of Economics,\\ Russian Federation,\\
Department of Mathematics, 6 Usacheva st, Moscow 119048;\\
Institute for Information Transmission Problems of RAS;\\
{\tt leo.rybnikov@gmail.com}} \\
\\
\footnotesize{
{\bf Aleksei Ilin} \\ National Research University
Higher School of Economics, \\
Russian Federation,\\
Department of Mathematics, 6 Usacheva st, Moscow 119048;\\
{\tt alex\_omsk@211.ru}}

\end{document}